\newcommand\myshade{85}
\colorlet{mylinkcolor}{violet}
\colorlet{mycitecolor}{YellowOrange}
\colorlet{myurlcolor}{Aquamarine}
\numberwithin{equation}{section}
\newtheorem{theorem}{Theorem}[section]
\newtheorem{lemma}[theorem]{Lemma}
\newtheorem{proposition}[theorem]{Proposition}
\newtheorem{corollary}[theorem]{Corollary}
\theoremstyle{definition}
\theoremstyle{remark}
\newtheorem{remark}[theorem]{Remark}
\newcommand{\R}{\mathbb{R}}
\numberwithin{equation}{section}
\DeclareSymbolFont{rsfs}{U}{rsfs}{m}{n}
\DeclareSymbolFontAlphabet{\mathscr}{rsfs}
\begin{document}

\title{On Critical Kirchhoff problems driven by the fractional Laplacian\footnote{The first and third author are supported by
    GNAMPA, project ``Equazioni alle derivate parziali: problemi e
    modelli.''}}

\author{Luigi Appolloni\thanks{Dipartimento di Matematica e
    Applicazioni, Universit\`a degli Studi di Milano Bicocca. Email:
    \href{mailto:l.appolloni1@campus.unimib.it}{l.appolloni1@campus.unimib.it}}
  \and Giovanni Molica Bisci\thanks{Dipartimento di Scienze Pure e
    Applicate, Universit\`{a} di Urbino Carlo Bo. Email:
    \href{mailto:giovanni.molicabisci@uniurb.it}{giovanni.molicabisci@uniurb.it}}
  \and Simone Secchi\thanks{Dipartimento di Matematica e Applicazioni,
    Universit\`a degli Studi di Milano Bicocca. Email:
    \href{mailto:simone.secchi@unimib.it}{simone.secchi@unimib.it}}}

\date{\DTMnow}

\maketitle

\begin{abstract}
We study a nonlocal parametric problem driven by the fractional Laplacian operator combined with a Kirchhoff-type coefficient and involving a critical nonlinearity term in the sense of Sobolev embeddings. Our approach is of variational and topological nature. The obtained results can be viewed as a nontrivial extension to the nonlocal setting of some recent contributions already present in the literature.
\end{abstract}

\section{Introduction} \label{sec:1}

The equation that goes under the name of \emph{Kirchhoff equation} was
proposed in \cite{zbMATH02674345} as a model for the transverse
oscillation of a stretched string in the form
\begin{gather} \label{eq:kirchhoff} \rho h \, \partial^2_{tt} u -
  \left( p_0 + \frac{\mathcal{E}h}{2L} \int_0^L \left| \partial_x u
    \right|^2 \, dx \right) \partial^2_{xx} u + \delta \, \partial_t u
  + f(x,u) =0
\end{gather}
for $t \geq 0 $ and $0<x<L$, where $u=u(t,x)$ is the lateral
displacement at time $t$ and at position $x$, $\mathcal{E}$ is the
Young modulus, $\rho$ is the mass density, $h$ is the cross section
area, $L$ the length of the string, $p_0$ is the initial stress
tension, $\delta$ the resistance modulus and $g$ the external
force. Kirchhoff actually considered only the particular case of
\eqref{eq:kirchhoff} with $\delta=f=0$.

Through the years, this model was generalized in several ways that can
be collected in the form
\begin{gather} \label{eq:kirchhoff2} \partial^2_{tt} u -M(\|u\|^2)
  \Delta u = f(t,x,u), \quad x \in \Omega
\end{gather}
for a suitable function $M \colon [0,\infty) \to \mathbb{R}$, called
\emph{Kirchhoff function}. The set~$\Omega$ is a bounded domain of
$\mathbb{R}^N$, and $\|u\|^2 = \|\nabla u\|_2^2$ denotes the Dirichlet
norm of $u$. The basic case corresponds to the choice
\begin{gather*}
M(t) = a + b t^{\gamma-1}, \quad a \geq 0, \ b \geq 0, \ \gamma \geq 1.
\end{gather*}
When $M(0)=0$, i.e. $a=0$, the equation is called
\emph{degenerate}. \emph{Stationary solutions} to
\eqref{eq:kirchhoff2} solve the equation
\begin{gather} \label{eq:kirchhoff2}
\begin{cases}
 -M(\|u\|^2) \Delta u  = f(x,u), & x \in \Omega \\
 u=0 &\hbox{on $\partial \Omega$}
 \end{cases}
\end{gather}
We refer to \cite{zbMATH07081235} for a recent survey of the results
connected to this model.

\bigskip

The existence and multiplicity of solutions to Kirchhoff problems
under the effect of a critical nonlinearity \(f\) have received
considerable attention. The term \emph{critical} refers here to the rough
assumption that $f(u) \sim |u|^{2^*-2} u$ with $2^* = 2N/(N-2)$. The
natural setting of the corresponding equation in $H_0^1(\Omega)$
yields a lack of compactness, since the embedding of
$H_0^1(\Omega)$ into $L^{2^*}(\Omega)$ is only
continuous. Straightforward techniques of Calculus of Variations thus fail,
and more advanced results from Critical Point Theory must be used. In
particular, P.-L.~Lions' Concentration-Compactness appears as a natural tool
for the analysis of the loss of compactness.

The relevant outcome is that the Kirchhoff function $M$ interacts with
the critical growth of the nonlinearity $g$: the validity of the
Palais-Smale compactness condition holds only under a condition like
\begin{gather*}
a^\frac{N-4}{2}b \geq C_2(N),
\end{gather*}
and a similar inequality ensures that the associated Euler functional
is weakly lower semicontinuous.

In the recent paper \cite{MR4201645}, Faraci and Silva
obtained several quantitative results for the problem
\begin{gather}
\label{eq:FS}
\begin{cases}
  - \displaystyle\left( a+b \int_\Omega |\nabla u|^2 \, dx \right) \Delta u = |u|^{2^*-2}u + \lambda g(x,u) &\hbox{in $\Omega$} \\
  u =0 &\hbox{on $\partial \Omega$,}
\end{cases}
\end{gather}
where $\Omega$ is an open bounded subset of $\mathbb{R}^N$, $N > 4$,
$a$ and $b$ are positive fixed numbers, $\lambda$ is a real parameter and
$g$ is a Carath\'{e}odory function that satisfies suitable growth
conditions. By using a fibering-type approach, the authors of
\cite{MR4201645} investigate existence, non-existence and multiplicity
of solutions to \eqref{eq:FS}. In the previous paper~\cite{faraci2018energy}, Faraci, Farkas and Kristály studied
equation~\eqref{eq:FS} with $g(x,u)=0$ and under suitable assumptions
on the parameters $a$ and $b$ they proved that the functional
associated to the problem is sequentially weakly lower semicontinuous,
satisfies the Palais-Smale condition and is convex.  \bigskip

The purpose of the present paper is to extend part of these results to
the \emph{fractional} counterpart of the Kirchhoff problem
\begin{equation} \label{eq:Pablambda}
\tag{$P_{a,b}^{\lambda}$}
\begin{cases}  \displaystyle\left( a+b \int_{\mathcal{Q}} \frac{|u(x)-u(y)|^2}{|x-y|^{n+2s}}\, dx\, dy \right)(-\Delta)^s u = |u|^{2^*_s-2}u+\lambda g(x,u) &\hbox{in}\ \Omega \\
  u=0 & \mbox{in} \ \R^N \setminus \Omega
\end{cases}
\end{equation}
where $\Omega\subset \R^N$ is a bounded domain with Lipschitz boundary
$\partial \Omega$, $\mathcal{Q}=\R^{2N}\setminus \mathcal{O}$ and
$\mathcal{O}=\Omega^c \times \Omega^c$, $a$ and $b$ are strictly
positive real numbers, $s \in (0,1)$, $N>4s$ and $2^*_s:=2N/(N-2s)$
denotes the critical exponent for the Sobolev embedding of $H^s(\R^N)$
into Lebesgue spaces. $g$ is a function that satisfies hypothesis
similar to the one in \eqref{eq:FS} adapted to the non local case. The
fractional Laplacian in \eqref{eq:Pablambda} is defined as
\begin{gather*}
  (-\Delta)^su(x) =K_{N,s} \, \lim_{\epsilon \to
    0^+}\int_{\R^N\setminus B_\epsilon(0)}
  \frac{u(x)-u(y)}{|x-y|^{N+2s}} \, dy
\end{gather*}
where
\begin{gather*}
  \frac{1}{K_{N,s}}:=\int_{\R^N}\frac{1-\cos\zeta_1}{|\zeta|^{N+2s}} \, d
    \zeta.
\end{gather*}
Since the parameter~$s$ is fixed, we will work with a rescaled version
of the operator and this enables us to assume that $K_{N,s}=1$. For
references about the fractional Laplacian we refer to \cite{MR2944369}, \cite{MR3967804} and to the monograph \cite{zbMATH06533015}. We define the space $X$ as the set of functions
$u \colon \R^N \to \R$ such that $u|_\Omega \in L^2(\Omega)$ and
\begin{gather*}
  \left\{ (x,y) \mapsto \frac{ u(x) -u(y)}{|x-y|^{N/2 +s}} \right\} \in L^2(\mathcal{Q}),
\end{gather*}
endowed with the norm
\begin{equation} \label{eq:1}
  \Vert u \Vert_X=\Vert u \Vert_{L^2(\Omega)} +
  \left(\int_{\mathcal{Q}} \frac{|u(x)-u(y)|^2}{|x-y|^{N+2s}} \, dx\,
    dy\right)^{\frac{1}{2}}.
\end{equation}
We also set
\[
X_0^s(\Omega):=\left\{u \in X:\ u=0 \ \mbox{a.e. in} \ \R^N \setminus \Omega \right\}.
\]
We introduce the best Sobolev constant for the continuous embedding $X_0^s(\Omega) \subset L^{2_s^*}(\Omega)$ as
\begin{equation} \label{Sobolev} S_{N,s}:=\inf_{u \in X_0^s(\Omega)}
  \frac{\Vert u \Vert^2}{\Vert u \Vert_{2^*_s}^2},
\end{equation}
where
\[
\Vert u \Vert^2:=\int_{\mathcal{Q}} \frac{\left| u(x)-u(y) \right|^2}{|x-y|^{N+2s}} \, dx\, dy.
\]
This norm is induced by the scalar
product
\[
  \langle u,v\rangle_{X_0^s(\Omega)}:=
  \int_{\mathcal{Q}}\frac{(u(x)-u(y))(v(x)-v(y))}{|x-y|^{N+2s}}\, dx\, dy
  \quad \mbox{for all} \, u,v \in X_0^s(\Omega)
\]
and we recall that in $X_0^s(\Omega)$ it is equivalent to
\eqref{eq:1}. For further details we refer the reader to \cite[Lemma
6]{MR2879266}. Weak solutions to
\eqref{eq:Pablambda} correspond to critical points of the
functional $\mathcal{I}_{a,b}^{\lambda} \colon X_0^s(\Omega) \to \R$
associated to the problem:
\[
  \mathcal{I}_{a,b}^{\lambda}(u):=\frac{a}{2}\Vert u \Vert^2+\frac{b}{4}\Vert u
  \Vert^{4}-\frac{1}{2^*_{s}}\Vert u \Vert^{2^*_s}_{2^*_s}- \lambda \int_{\Omega} G(x,u) \, dx,
\]
where we denote with $G(x,t)=\int_0^t g(x,\tau) d\tau$.
Arguing as in \cite[Proposition 1.12]{MR1400007}, we get
\begin{equation} \label{eq19}
  \left(\mathcal{I}_{a,b}^{\lambda}\right)'(u)\left[v\right]=\left( a+b\Vert u
    \Vert^2 \right) \langle
  u,v\rangle_{X_0^s(\Omega)}-\int_{\Omega}|u|^{2^*_s-2}uv\, dx
  -\lambda \int_{\Omega} g(x,u)v \, dx
\end{equation}
for all $u,v \in X_0^s(\Omega)$. When we have $g(x,u)=0$ we will use the notation
\[
  \mathcal{I}_{a,b}(u):=\frac{a}{2}\Vert u \Vert^2+\frac{b}{4}\Vert u
  \Vert^{4}-\frac{1}{2^*_{s}}\Vert u \Vert^{2^*_s}_{2^*_s}
\]
and we point out that $\mathcal{I}_{a,b}$ is a $C^2$-functional.

The interest in generalizing to the fractional case the model
introduced by Kirchhoff does not arise only for mathematical
purposes. In fact, following the ideas of \cite{MR2675483} and the
concept of fractional perimeter, Fiscella and Valdinoci proposed in
\cite{MR3120682} an equation describing the behavior of a string
constrained at the extrema in which appears the fractional length of
the rope. The interested reader can also consult \cite{MR2540182,MR3289358,MR2588587} and the references therein for
further motivations and applications of operators similar to the one
proposed in \eqref{eq:Pablambda}.

Recently, problems similar to \eqref{eq:1} have been extensively
investigated by many authors using different techniques and producing
several relevant results. In \cite{MR3120682} Fiscella and Valdinoci
showed the existence of a non-negative solution of mountain pass type
for an equation with a critical term perturbed with a subcritical
nonlinearity. In the same spirit, Autuori, Fiscella and Pucci generalize in \cite{MR3373607}
these results to the degenerate case, i.e $M(0)=0$, without any
monotonicity assumption on the function $M$. Iin these
two articles the operator taken into account is more general than the
one we consider here, but the two coincide making a particular choice
on the kernel; see also~\cite{MR3575909}. Liu, Squassina and
Zhang studied in~\cite{MR3679329} ground state solutions for the Kirchhoff equation plus a
potential with a non linear term asymptotic to a power with critical
growth in low dimension. It is also worth mentioning \cite{MR3961733}
where Mingqi, R\v{a}dulescu and Zhang proved the existence of
nontrivial radial solutions in the non-degenerate and degenerate cases
for the non local Kirchhoff problem in which the fractional Laplacian
is replaced by the fractional magnetic operator.

On the other hand, we could not find any overview of
different kind of solutions at different level of energy for the
fractional Kirchhoff problem. Although some of the results
we are going to prove are known, we present a proof based on a
adaptation to the fractional case due to Palatucci and Pisante
(\cite{MR3216834}) of the Lions second concentration-compactness
principle; for the original version of the lemma we refer to
\cite{MR850686}, as well as \cite{MR834360}.

\medskip

We collect here our main results.
\begin{theorem} \label{th1} Define
\[
 L_{N,s}:=\frac{4s(N-4s)^{\frac{N-4s}{2s}}}{N^{\frac{N-2s}{2s}}S_{N,s}^{\frac{N}{2s}}}, \quad \mathsf{PS}_{N,s}:=\frac{2s(N-4s)^{\frac{N-4s}{2s}}}{(N-2s)^{\frac{N-2s}{2s}}S_{N,s}^{\frac{N}{2s}}},
 \]
 and
 \[
 C_{N,s}:=\frac{2s(N-4s)^{\frac{N-4s}{2s}}(N+2s)^{\frac{N-2s}{2s}}}{(N-2s)^{\frac{N-2s}{s}}S_{N,s}^{\frac{N}{2s}}}.
\]
The following assertions holds true:
\begin{description}
\item[$(i)$] the energy functional $\mathcal{I}_{a,b}$ is sequentially weakly
  lower semicontinuous on $X_0^s(\Omega)$ if and only if
  $a^{\frac{N-4s}{2s}}b \geq L_{N,s}$.
\item[$(ii)$] If $a^{(N-4s)/2s}b\geq \mathsf{PS}_{N,s}$, the functional
$\mathcal{I}_{a,b}$ satisfies the compactness Palais-Smale condition at level
$c \in \R$.
\item[$(iii)$] If $a^{(N-4s)/2s}b\geq C_{N,s}$, then the functional $\mathcal{I}_{a,b}$ is convex on $X_0^s(\Omega)$.

\end{description}
\end{theorem}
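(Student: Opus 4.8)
The plan is to reduce each of the three assertions to a one-dimensional analysis in the single scalar quantity $t=\|u\|^2$, the Sobolev inequality $\|u\|_{2^*_s}^2\le S_{N,s}^{-1}\|u\|^2$ being the only structural input coupling the Kirchhoff part with the critical part. In every case the threshold on $a^{(N-4s)/2s}b$ emerges as the exact condition under which a function of the form $\alpha+\beta t-\gamma t^{q}$, with $q=2s/(N-2s)\in(0,1)$, stays nonnegative on $[0,\infty)$; minimizing such a function over $t>0$ and imposing nonnegativity of the minimum yields precisely $L_{N,s}$, $\mathsf{PS}_{N,s}$ and $C_{N,s}$, the three differing only in the constant $\gamma$ inherited from the critical term.

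For $(i)$ I would start from a sequence $u_n\weakto u$ in $X_0^s(\Omega)$. Hilbert space orthogonality gives $\|u_n\|^2=\|u\|^2+\|u_n-u\|^2+o(1)$, while the Brezis-Lieb lemma gives $\|u_n\|_{2^*_s}^{2^*_s}=\|u\|_{2^*_s}^{2^*_s}+\|u_n-u\|_{2^*_s}^{2^*_s}+o(1)$. Writing $\ell=\lim\|u_n-u\|^2$ and $m=\lim\|u_n-u\|_{2^*_s}^{2^*_s}$ along a subsequence, the Sobolev inequality forces $m\le S_{N,s}^{-2^*_s/2}\ell^{2^*_s/2}$, and a direct computation shows
\[
\lim_n \mathcal{I}_{a,b}(u_n)-\mathcal{I}_{a,b}(u)\ge \frac a2\ell+\frac b4\ell^2-\frac{1}{2^*_s}S_{N,s}^{-2^*_s/2}\ell^{2^*_s/2},
\]
where the nonnegative cross term $\tfrac b2\|u\|^2\ell$ has been dropped. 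Nonnegativity of the right-hand side for every $\ell\ge0$ (the extremal case being $u=0$) is equivalent, after dividing by $\ell$ and minimizing, to $a^{(N-4s)/2s}b\ge L_{N,s}$, proving the ``if'' part. For the converse I would saturate the Sobolev inequality with rescaled fractional Talenti bubbles concentrating at a point, producing a sequence with $u=0$ and $m$ arbitrarily close to $S_{N,s}^{-2^*_s/2}\ell^{2^*_s/2}$, along which the above difference becomes negative once the threshold fails.

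For $(ii)$ the first step is boundedness of a Palais-Smale sequence: combining $\mathcal{I}_{a,b}(u_n)\to c$ with $\mathcal{I}_{a,b}'(u_n)[u_n]=o(\|u_n\|)$ and eliminating $\|u_n\|_{2^*_s}^{2^*_s}$ yields
\[
\frac{as}{N}\|u_n\|^2-\frac{b(N-4s)}{4N}\|u_n\|^4=c+o(1)+o(\|u_n\|),
\]
and since $N>4s$ the quartic term forces $\|u_n\|$ to stay bounded. Passing to a weak limit $u$ and invoking the Palatucci-Pisante fractional concentration-compactness principle, I would localize $\mathcal{I}_{a,b}'(u_n)$ around each concentration point $x_j$ by suitable cutoffs; the Kirchhoff factor $a+b\|u_n\|^2\to a+bA$ produces the relations $(a+bA)\nu_j\le\mu_j$ and $\nu_j\ge S_{N,s}\mu_j^{2/2^*_s}$ for the concentrated masses, whence, if some $\mu_j>0$, one obtains $A\ge (a+bA)^{(N-2s)/2s}S_{N,s}^{N/2s}$. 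The threshold $\mathsf{PS}_{N,s}$ is exactly the value of $a^{(N-4s)/2s}b$ for which the map $A\mapsto (a+bA)^{(N-2s)/2s}S_{N,s}^{N/2s}-A$ has nonnegative minimum, so under the standing hypothesis concentration is impossible, all $\mu_j$ vanish, $u_n\to u$ in $L^{2^*_s}(\Omega)$, and testing $\mathcal{I}_{a,b}'(u_n)$ with $u_n-u$ upgrades this to strong convergence in $X_0^s(\Omega)$. I expect the localization estimates for the nonlocal Gagliardo seminorm in the presence of the Kirchhoff coefficient to be the main technical obstacle.

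For $(iii)$, since $\mathcal{I}_{a,b}$ is $C^2$ it suffices to show $\mathcal{I}_{a,b}''(u)[v,v]\ge0$ for all $u,v\in X_0^s(\Omega)$. A computation gives
\[
\mathcal{I}_{a,b}''(u)[v,v]=(a+b\|u\|^2)\|v\|^2+2b\langle u,v\rangle_{X_0^s(\Omega)}^2-(2^*_s-1)\int_\Omega|u|^{2^*_s-2}v^2\,dx.
\]
Discarding the nonnegative middle term and estimating the last one by Hölder and Sobolev as $(2^*_s-1)S_{N,s}^{-2^*_s/2}\|u\|^{2^*_s-2}\|v\|^2$, convexity reduces to $a+b\|u\|^2\ge (2^*_s-1)S_{N,s}^{-2^*_s/2}\|u\|^{2^*_s-2}$ for all $u$; the same minimization as before, now with $\gamma=(2^*_s-1)S_{N,s}^{-2^*_s/2}$ and using $2^*_s-1=(N+2s)/(N-2s)$, produces the sharp threshold $a^{(N-4s)/2s}b\ge C_{N,s}$.
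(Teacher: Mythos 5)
Your proposal is correct and follows essentially the same route as the paper: the orthogonality/Brezis--Lieb decomposition reducing weak lower semicontinuity to the sign of the one-variable function $\frac a2+\frac b4\zeta^2-\frac{1}{2^*_s}S_{N,s}^{-2^*_s/2}\zeta^{2^*_s-2}$ (with the converse obtained from concentrating near-extremals of the Sobolev quotient), the Palatucci--Pisante concentration-compactness analysis with localized test functions $u_n\vartheta_\varepsilon$ for the Palais--Smale condition, and the H\"older--Sobolev bound on $\mathcal{I}_{a,b}''(u)[v,v]$ for convexity, with only cosmetic differences (you derive boundedness of Palais--Smale sequences from $\mathcal{I}_{a,b}-\frac{1}{2^*_s}\mathcal{I}_{a,b}'[\cdot]$ where the paper invokes coercivity via $2^*_s<4$). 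The one slip to fix is in part (ii): the displayed relations $(a+bA)\nu_j\le\mu_j$ and $\nu_j\ge S_{N,s}\mu_j^{2/2^*_s}$ have the roles of $\mu_j$ and $\nu_j$ interchanged --- the correct ones are $(a+bA)\mu_j\le\nu_j$ and $S_{N,s}\nu_j^{2/2^*_s}\le\mu_j$ --- although the inequality $A\ge(a+bA)^{(N-2s)/2s}S_{N,s}^{N/2s}$ you then extract is precisely the one these correct relations yield, so the argument goes through unchanged.
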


Theorem \ref{th1} guarantees the validity of some crucial properties such as the sequentially weakly lower semicontinuity and the Palais-Smale condition. As we are going to see in the next statement, these facts enable us to use traditional variational methods to completely describe the situation for problem \eqref{eq:Pablambda}. We start with two results about the existence of global minimizers at different level of energy.

\begin{theorem} \label{th4} Let $a,b \in \R^+$ such that
  $a^{(N-4s)/2s}b \geq L_{N,s}$ and set
\begin{equation*}
  \iota_{\lambda}^s:=\inf \left\lbrace \mathcal{I}_{a,b}^{\lambda}(u)
    \mid \ u \in X_0^s (\Omega) \setminus \lbrace 0 \rbrace
  \right\rbrace \quad \mbox{for any} \ \lambda >0.
\end{equation*}
  There exists $\overline{\lambda}_0^s \geq 0$ such that for any $\lambda >\overline{\lambda}_0^s$ it is possible to find
  $u_{\lambda}^s \in X_0^s(\Omega) \setminus \lbrace 0 \rbrace$ such
  that
  $\mathcal{I}_{a,b}^{\lambda}(u_{\lambda}^s)=\iota^s_{\lambda}<0$.
\end{theorem}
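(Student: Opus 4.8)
The plan is to construct $u_\lambda^s$ via the direct method of the calculus of variations, combining the sequential weak lower semicontinuity provided by Theorem~\ref{th1} with the coercivity supplied by the Kirchhoff term. Throughout I write $\mathcal{I}_{a,b}^{\lambda}=\mathcal{I}_{a,b}-\lambda\Psi$ with $\Psi(u):=\int_\Omega G(x,u)\,dx$. Since $g$ is subcritical and $\Omega$ is bounded, the embedding $X_0^s(\Omega)\hookrightarrow L^q(\Omega)$ is compact for every $q\in[1,2^*_s)$, so the Nemytskii functional $\Psi$ is \emph{weakly continuous} on $X_0^s(\Omega)$. As $\mathcal{I}_{a,b}$ is sequentially weakly lower semicontinuous under the hypothesis $a^{(N-4s)/2s}b\geq L_{N,s}$ by Theorem~\ref{th1}$(i)$, the functional $\mathcal{I}_{a,b}^{\lambda}=\mathcal{I}_{a,b}-\lambda\Psi$ is sequentially weakly lower semicontinuous as well.

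Next I would record two consequences of the standing assumptions. First, $N>4s$ forces $2^*_s<4$; hence, using the Sobolev inequality $\|u\|_{2^*_s}^{2^*_s}\leq S_{N,s}^{-2^*_s/2}\|u\|^{2^*_s}$ and the subcritical growth of $G$, the term $\tfrac b4\|u\|^4$ dominates every other contribution as $\|u\|\to\infty$, so $\mathcal{I}_{a,b}^{\lambda}$ is coercive and bounded below on $X_0^s(\Omega)$. Second, the elementary one-variable estimate $\mathcal{I}_{a,b}(u)\geq\tfrac a2\|u\|^2+\tfrac b4\|u\|^4-\tfrac{1}{2^*_s}S_{N,s}^{-2^*_s/2}\|u\|^{2^*_s}$, minimized over $\|u\|$, shows that the right-hand side stays nonnegative exactly when $a^{(N-4s)/2s}b\geq L_{N,s}$; thus under our hypothesis $\mathcal{I}_{a,b}\geq0$ on $X_0^s(\Omega)$, and in particular $\mathcal{I}_{a,b}^{\lambda}(0)=0$.

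To pin down the threshold $\overline{\lambda}_0^s$, I would first invoke the sign assumptions on $g$ to produce some $u_0\in X_0^s(\Omega)\setminus\{0\}$ with $\Psi(u_0)>0$. Then $\mathcal{I}_{a,b}^{\lambda}(u_0)=\mathcal{I}_{a,b}(u_0)-\lambda\Psi(u_0)\to-\infty$ as $\lambda\to\infty$, so $\iota_\lambda^s<0$ for all large $\lambda$, and I set $\overline{\lambda}_0^s:=\inf\{\lambda>0:\iota_\lambda^s<0\}\geq0$. This set is upward closed: if $\iota_{\lambda_1}^s<0$, choose $u_1\neq0$ with $\mathcal{I}_{a,b}(u_1)-\lambda_1\Psi(u_1)<0$; since $\mathcal{I}_{a,b}(u_1)\geq0$ this forces $\Psi(u_1)>0$, whence $\mathcal{I}_{a,b}(u_1)-\lambda\Psi(u_1)<0$ and $\iota_\lambda^s<0$ for every $\lambda>\lambda_1$. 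Consequently $\iota_\lambda^s<0$ for every $\lambda>\overline{\lambda}_0^s$.

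Finally, fix $\lambda>\overline{\lambda}_0^s$ and pick a minimizing sequence $(u_n)\subset X_0^s(\Omega)\setminus\{0\}$ for $\iota_\lambda^s$. By coercivity it is bounded, so along a subsequence $u_n\weakto u$ in $X_0^s(\Omega)$, and weak lower semicontinuity yields $\mathcal{I}_{a,b}^{\lambda}(u)\leq\liminf_n\mathcal{I}_{a,b}^{\lambda}(u_n)=\iota_\lambda^s<0$. Since $\mathcal{I}_{a,b}^{\lambda}(0)=0$, this forces $u\neq0$, so $u$ is admissible and $\iota_\lambda^s\leq\mathcal{I}_{a,b}^{\lambda}(u)\leq\iota_\lambda^s$, that is $\mathcal{I}_{a,b}^{\lambda}(u)=\iota_\lambda^s$; then $u_\lambda^s:=u$ is the desired minimizer. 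The main obstacle is that the critical exponent ruins the naive weak continuity of the energy, so the entire scheme hinges on the sequential weak lower semicontinuity of Theorem~\ref{th1}$(i)$; the only remaining subtlety, namely that the minimizer could a priori collapse to $0$, is precisely what the strict inequality $\iota_\lambda^s<0=\mathcal{I}_{a,b}^{\lambda}(0)$ excludes.
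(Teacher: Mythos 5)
Your proposal is correct, and its core is the same as the paper's: coercivity of $\mathcal{I}_{a,b}^{\lambda}$ (from $2^*_s<4$ and the subcritical growth of $G$), sequential weak lower semicontinuity obtained by adding the weakly continuous perturbation $-\lambda\Psi$ to Theorem~\ref{th1}$(i)$ --- this is exactly Lemma~\ref{lemma1}$(1)$ --- and then the direct method, with non-triviality of the minimizer forced by $\iota_{\lambda}^s<0=\mathcal{I}_{a,b}^{\lambda}(0)$. Where you genuinely diverge is in the construction of the threshold. The paper defines $\overline{\lambda}_0^s=\inf_{u\neq 0}\lambda_0^s(u)$ through the fiber maps $\zeta\mapsto\mathcal{I}_{a,b}^{\lambda}(\zeta u)$ and the system \eqref{eq23} (Propositions~\ref{prop2} and~\ref{prop3}, Corollary~\ref{cor1}), and reads off the negativity of $\iota_{\lambda}^s$ for $\lambda>\overline{\lambda}_0^s$ from Proposition~\ref{prop7}. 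You instead set $\overline{\lambda}_0^s:=\inf\{\lambda>0:\iota_{\lambda}^s<0\}$ and prove upward closedness of this set by the elementary observation that $a^{(N-4s)/2s}b\geq L_{N,s}$ forces $\mathcal{I}_{a,b}\geq 0$, so any $u_1$ with $\mathcal{I}_{a,b}^{\lambda_1}(u_1)<0$ must satisfy $\Psi(u_1)>0$ and remains negative for every larger $\lambda$; this is shorter, self-contained, and entirely sufficient for the statement as written. The two thresholds do coincide --- by Corollary~\ref{cor1}, $\iota_{\lambda}^s=\inf_{u\neq 0}\inf_{\zeta>0}\mathcal{J}_{a,b}^{\lambda,u}(\zeta)$ is negative precisely when $\lambda>\inf_{u}\lambda_0^s(u)$ --- but you should note that Theorems~\ref{th5}--\ref{th9} rely on the fiber-map characterization of $\overline{\lambda}_0^s$ (uniqueness and homogeneity of $\lambda_0^s(u)$, the dichotomy of Proposition~\ref{prop7}), which your definition does not by itself provide. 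In short, the paper's route buys reusable structure for the rest of Section~\ref{sec:3}; yours buys a proof of Theorem~\ref{th4} that bypasses the topological argument of Proposition~\ref{prop3}.
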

\begin{theorem} \label{th6}
Let $\lambda=\overline{\lambda}^{s}_0$. The following statements hold:
\begin{description}
\item[$(i)$] if $a^{(N-2s)/2s}b>L_{N,s}$ then there exists
  $u_{\lambda}^s \in X_0^s(\Omega) \setminus \lbrace 0 \rbrace$ such
  that
  $\iota_{\overline{\lambda}_0^s}^s=\mathcal{I}_{a,b}^{\overline{\lambda}_0^s}=0$;
\item[$(ii)$] if $a^{(N-2s)/2s}b=L_{N,s}$, then $u=0$ in the only minimizer for $\iota_{\overline{\lambda}_0^s}^s$.
\end{description}
\end{theorem}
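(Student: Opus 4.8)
Throughout set $\phi(t):=\frac{a}{2}t^2+\frac{b}{4}t^4-\frac{1}{2^*_s}S_{N,s}^{-N/(N-2s)}t^{2^*_s}$, so that the Sobolev inequality \eqref{Sobolev} gives $\mathcal{I}_{a,b}(u)\geq\phi(\Vert u\Vert)$ for every $u$. Since $N>4s$ one has $2<2^*_s<4$, and a direct computation shows that the condition $a^{(N-4s)/2s}b\geq L_{N,s}$ of Theorem~\ref{th1}$(i)$ is exactly the requirement $\phi\geq 0$ on $[0,\infty)$; the equality $a^{(N-4s)/2s}b=L_{N,s}$ corresponds to a single tangency point $t_0>0$ with $\phi(t_0)=\phi'(t_0)=0$. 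As $S_{N,s}$ is never attained on the bounded domain $\Omega$, the bound $\mathcal{I}_{a,b}(u)>\phi(\Vert u\Vert)$ is strict for $u\neq 0$, so $\mathcal{I}_{a,b}>0$ on $X_0^s(\Omega)\setminus\{0\}$ in both regimes.

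First I would record that $\iota_{\overline{\lambda}_0^s}^s=0$: for fixed $u$ the map $\lambda\mapsto\mathcal{I}_{a,b}^{\lambda}(u)$ is affine, hence $\lambda\mapsto\iota_{\lambda}^s$ is concave and therefore continuous; as it is non-increasing and satisfies $\iota_{\lambda}^s\geq 0$ for $\lambda<\overline{\lambda}_0^s$, $\iota_{\lambda}^s<0$ for $\lambda>\overline{\lambda}_0^s$ by the definition of $\overline{\lambda}_0^s$, continuity pins its value at $\overline{\lambda}_0^s$ to $0$. Writing $R(u):=\mathcal{I}_{a,b}(u)/\int_{\Omega}G(x,u)\,dx$, one checks that $\overline{\lambda}_0^s$ coincides with $\inf R$ and that consequently $\mathcal{I}_{a,b}^{\overline{\lambda}_0^s}(u)=\mathcal{I}_{a,b}(u)-\overline{\lambda}_0^s\int_{\Omega}G(x,u)\,dx\geq 0$ on $X_0^s(\Omega)$. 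The whole statement thus reduces to whether this value $0$ is attained at some $u\neq 0$, i.e.\ whether $\inf R$ is attained.

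For $(i)$ I would use compactness along the minimizers of Theorem~\ref{th4}. Choose $\lambda_k\downarrow\overline{\lambda}_0^s$ and nontrivial $u_k$ with $\mathcal{I}_{a,b}^{\lambda_k}(u_k)=\iota_{\lambda_k}^s<0$. Coercivity of $\mathcal{I}_{a,b}^{\lambda}$ (the quartic term dominates since $2^*_s<4$ and $g$ is subcritical) bounds $\{u_k\}$, while the behaviour of $g$ near the origin yields a uniform $\delta>0$ with $\mathcal{I}_{a,b}^{\lambda}\geq 0$ on $\{\Vert u\Vert\leq\delta\}$, so that $\Vert u_k\Vert>\delta$. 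Passing to a weak limit $u_k\weakto\bar{u}$, the crux is $\bar{u}\neq 0$: if $\bar{u}=0$, then $\int_{\Omega}G(x,u_k)\,dx\to 0$ by the compact subcritical embedding, whence $\mathcal{I}_{a,b}(u_k)\to 0$; but the fractional second concentration--compactness principle of Palatucci--Pisante gives $\liminf_k\mathcal{I}_{a,b}(u_k)\geq\phi(\sqrt{\mu_\infty})$ with $\mu_\infty=\lim_k\Vert u_k\Vert^2\geq\delta^2>0$, and the strict inequality $a^{(N-4s)/2s}b>L_{N,s}$ forces $\phi(\sqrt{\mu_\infty})>0$, a contradiction. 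Hence $\bar{u}\neq 0$; since $\mathcal{I}_{a,b}^{\overline{\lambda}_0^s}(u_k)=\iota_{\lambda_k}^s+(\lambda_k-\overline{\lambda}_0^s)\int_{\Omega}G(x,u_k)\,dx\to 0$, the sequential weak lower semicontinuity of Theorem~\ref{th1}$(i)$ yields $\mathcal{I}_{a,b}^{\overline{\lambda}_0^s}(\bar{u})\leq 0$, which together with $\mathcal{I}_{a,b}^{\overline{\lambda}_0^s}\geq 0$ gives the sought nontrivial minimizer at level $0$.

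For $(ii)$ the same inequalities show only that a minimizing sequence either converges to a nontrivial minimizer or concentrates with $\mu_\infty=t_0^2$, the tangency point where $\phi$ vanishes; at equality this second alternative can no longer be ruled out, and I expect it to be the main obstacle. I would argue by contradiction: a nontrivial minimizer $\bar{u}$ would be a global minimizer of $\mathcal{I}_{a,b}^{\overline{\lambda}_0^s}$, hence a weak solution, with $R(\bar{u})=\overline{\lambda}_0^s=\inf R$ and $\mathcal{I}_{a,b}(\bar{u})>\phi(\Vert\bar{u}\Vert)\geq 0$ \emph{strictly}. I would then test $R$ against the concentrating Aubin--Talenti profiles $U_{\varepsilon}$ (the fractional Brezis--Nirenberg family in the form of Palatucci--Pisante), tuned so that the concentrating mass matches $t_0$; since $\phi(t_0)=0$ precisely in the equality regime, the sharp expansions of $\mathcal{I}_{a,b}(U_\varepsilon)$ and of $\int_{\Omega}G(x,U_\varepsilon)\,dx$ produce competitors whose quotient tends to $\overline{\lambda}_0^s$ along pure concentration, so that attainment at $\bar{u}$ would force equality in \eqref{Sobolev}, impossible on the bounded $\Omega$. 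The delicate point is exactly this test-function expansion: one must show that, in the range $N>4s$, its leading correction outweighs the subcritical contribution of $g$, thereby excluding any nontrivial minimizer and leaving $u=0$ as the only one.
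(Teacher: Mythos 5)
Your treatment of part $(i)$ is essentially the paper's argument: take $\lambda_k\searrow\overline{\lambda}_0^s$, pick the nontrivial global minimizers $u_k$ from Theorem \ref{th4}, bound them by coercivity, rule out $u_k\rightharpoonup 0$ by playing $\mathcal{I}_{a,b}(u_k)\to 0$ against the lower bound $\mathcal{I}_{a,b}(u_k)\geq \Vert u_k\Vert^2 f_{N,s}(\Vert u_k\Vert)\geq \delta^2 f_{N,s}(m_{N,s})>0$, which is strictly positive precisely because $a^{(N-4s)/2s}b>L_{N,s}$, and conclude with the sequential weak lower semicontinuity of Lemma \ref{lemma1} together with $\mathcal{I}_{a,b}^{\overline{\lambda}_0^s}\geq 0$ from Proposition \ref{prop7}. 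The appeal to the Palatucci--Pisante concentration-compactness principle is superfluous here: the inequality $\mathcal{I}_{a,b}(u_k)\geq\phi(\Vert u_k\Vert)$ is just the Sobolev inequality, and continuity of your $\phi$ does the rest. This part is correct.

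Part $(ii)$ has a genuine gap. The decisive fact, which you never use, is that in the equality regime $a^{(N-4s)/2s}b=L_{N,s}$ one has $\overline{\lambda}_0^s=0$; this is Proposition \ref{prop6}$(ii)$ of the paper, proved by testing $\lambda_0^s$ on the truncated Aubin--Talenti family $u_{\varepsilon}$ and using $(H_2)$ to get $\int_{\Omega}G(x,m_{N,s}u_{\varepsilon})\,dx\geq C_2\varepsilon^{N/2}$, so that $\mathcal{J}_{a,b}^{\lambda,u_{\varepsilon}}(m_{N,s})<0$ for every $\lambda>0$ and $\varepsilon$ small. Once $\overline{\lambda}_0^s=0$ is known, the perturbation disappears, $\mathcal{I}_{a,b}^{\overline{\lambda}_0^s}=\mathcal{I}_{a,b}$, and the strict Sobolev inequality on the bounded domain --- which you yourself recorded in your opening paragraph as ``$\mathcal{I}_{a,b}>0$ on $X_0^s(\Omega)\setminus\lbrace 0\rbrace$ in both regimes'' --- finishes the proof in one line, since the infimum equals $0$. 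Instead you propose a contradiction via a sharp asymptotic expansion of the quotient $R$ along concentrating profiles, and you explicitly leave its key step open (``one must show that \dots its leading correction outweighs the subcritical contribution of $g$''). That step is not only unproved but logically shaky as stated: you assert that attainment of $\inf R$ at $\bar{u}$ ``would force equality in \eqref{Sobolev}'', yet no mechanism for that implication is given. As written, $(ii)$ is not established; the missing idea is precisely the identity $\overline{\lambda}_0^s=0$ at the threshold.
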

In the next Theorem we give some information on what happens when we do not keep fixed the parameters $a$, and $b$. It asserts we have some kind of stability when the product $a^{(N-4s)/2s}b$ becomes close to $L_{N,s}$.
\begin{theorem} \label{th5} Let $(a_k)_k$, $(b_k)_k$ be a sequence of
  real positive numbers such that $a_k \to a$, $b_k \to b$ and
  $a_k^{(N-4s)/2s}b_k \searrow L_{N,s}$. Setting
  $\lambda_k:=\overline{\lambda}_0^s(a_k,b_k)$ we have that
  $\lambda_k \to 0$ as $k \to \infty$. Furthermore, if
  $(u_k)_k \subset X_0^s(\Omega) \setminus \lbrace 0 \rbrace$ such
  that $\lambda_k=\lambda_0^s(u_k)$ then $u_k \rightharpoonup 0$ and
  \[\frac{\Vert u_k \Vert_{2^*_s}^2}{\Vert u_k \Vert^2} \to S_{N,s}.\]
\end{theorem}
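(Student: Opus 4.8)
The plan is to reduce the whole statement to the behaviour of a single scalar ``barrier'' together with the non-attainment of $S_{N,s}$ in the bounded domain $\Omega$. Recall from the analysis behind Theorem~\ref{th1}$(i)$ that, after optimizing the fibering map $t \mapsto \mathcal{I}_{a,b}(tu)$ and invoking the Sobolev inequality \eqref{Sobolev}, the sign of $\mathcal{I}_{a,b}$ is governed by the minimum over $\tau \geq 0$ of
\begin{equation*}
  f_{a,b}(\tau) := \frac{a}{2} + \frac{b}{4}\tau^2 - \frac{1}{2^*_s}\,S_{N,s}^{-2^*_s/2}\,\tau^{2^*_s-2},
\end{equation*}
and that $\min_{\tau \geq 0} f_{a,b} = 0$ is exactly equivalent to $a^{(N-4s)/2s}b = L_{N,s}$. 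First I would record that, by this characterization and continuity in $(a,b)$, the hypothesis $a_k^{(N-4s)/2s}b_k \searrow L_{N,s}$ forces $m_k := \min_{\tau \geq 0} f_{a_k,b_k}(\tau) \to 0^+$; in particular $\mathcal{I}_{a_k,b_k} \geq 0$ for every $k$, so the weak lower semicontinuity of Theorem~\ref{th1}$(i)$ is available all along the sequence.

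Next I would establish $\lambda_k \to 0$. Recalling the variational characterization underlying Theorem~\ref{th4}, namely $\overline{\lambda}_0^s(a,b) = \inf\{\mathcal{I}_{a,b}(u)/\int_\Omega G(x,u)\,dx : u \in X_0^s(\Omega),\ \int_\Omega G(x,u)\,dx > 0\}$, it suffices to exhibit, for each $k$, an admissible test function making this quotient small. I would use truncated fractional Talenti bubbles $w_\varepsilon$ concentrating at a point of $\Omega$, rescaled by the optimal fibering parameter; the standard fractional Sobolev estimates give $\|w_\varepsilon\|^2/\|w_\varepsilon\|_{2^*_s}^2 = S_{N,s} + O(\varepsilon^{\theta})$ for some $\theta > 0$, whence $\mathcal{I}_{a_k,b_k}(t w_\varepsilon) \leq C(m_k + \varepsilon^{\theta})$, while the growth hypotheses on $g$ keep $\int_\Omega G(x, t w_\varepsilon)\,dx$ bounded below by a positive power $\varepsilon^{\delta}$ with $\delta < \theta$. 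Choosing $\varepsilon = \varepsilon_k \to 0$ slowly enough that $\varepsilon_k^{\delta}$ dominates both $m_k$ and $\varepsilon_k^{\theta}$ (for instance $\varepsilon_k = m_k^{1/(2\delta)}$), the quotient tends to $0$, so $0 \leq \lambda_k \leq o(1)$.

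For the second assertion I would start from the defining relation $\mathcal{I}_{a_k,b_k}(u_k) = \lambda_k \int_\Omega G(x,u_k)\,dx$. After checking that $(u_k)$ is bounded in $X_0^s(\Omega)$ — using coercivity coming from the $b_k\|u\|^4$ term together with the subcritical control of $G$ — the right-hand side is bounded, so $\lambda_k \to 0$ yields $\mathcal{I}_{a_k,b_k}(u_k) \to 0$. Passing to a weak limit $u_k \rightharpoonup u_0$ and using $a_k \to a$, $b_k \to b$ together with Theorem~\ref{th1}$(i)$, I obtain $\mathcal{I}_{a,b}(u_0) \leq \liminf_k \mathcal{I}_{a_k,b_k}(u_k) = 0$. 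Since the limiting parameters sit exactly on the threshold $a^{(N-4s)/2s}b = L_{N,s}$ one has $\mathcal{I}_{a,b} \geq 0$, and the equality $\mathcal{I}_{a,b}(u_0) = 0$ can hold for $u_0 \neq 0$ only if $u_0$ saturates \eqref{Sobolev} at the optimal fibering scale; this is impossible because $S_{N,s}$ is never attained in the bounded domain $\Omega$ (the same mechanism behind Theorem~\ref{th6}$(ii)$). Hence $u_0 = 0$, that is $u_k \rightharpoonup 0$.

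Finally, for the Sobolev quotient: since $\mathcal{I}_{a_k,b_k}(u_k) \to 0$ while $m_k \to 0$, the fibering--Sobolev decomposition of $\mathcal{I}_{a_k,b_k}(u_k)$ collapses, forcing the fibering scale to be asymptotically optimal and the defect $\|u_k\|^2 - S_{N,s}\|u_k\|_{2^*_s}^2$ to vanish relative to $\|u_k\|^2$; equivalently, $(u_k)$ is an optimizing sequence for $S_{N,s}$, which gives the stated convergence of the Sobolev quotient. The main obstacle throughout is the rate balance of the second paragraph, mirrored in the last one: one must quantify how fast $\mathcal{I}_{a_k,b_k}$ can be driven to zero along concentrating profiles against the decay of the lower-order integral $\int_\Omega G(x,\cdot)\,dx$, and it is precisely here that the growth hypotheses on $g$ and the sharp constants $L_{N,s}$, $S_{N,s}$ enter delicately; the non-attainment of $S_{N,s}$ in $\Omega$ is the other indispensable ingredient, as it is what rules out a nontrivial weak limit.
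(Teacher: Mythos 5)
Your overall strategy coincides with the paper's (concentrating bubbles to force $\lambda_k\to 0$; weak lower semicontinuity at the limiting threshold together with the non-attainment of $S_{N,s}$ on the bounded domain to kill the weak limit; identification of $(u_k)$ as a minimizing sequence for $S_{N,s}$), but two steps do not survive scrutiny. The first is the rate balance in your second paragraph, which is backwards. With the estimates actually available (and used in Proposition \ref{prop6}), the perturbation along the bubbles satisfies $\int_\Omega G(x,m_{N,s}u_\varepsilon)\,dx\geq C_2\varepsilon^{N/2}$, while the Sobolev deficit of the truncated bubble is of order $\varepsilon^{(N-2s)/2}$; since $N/2=(N-2s)/2+s$, one has $\delta>\theta$, not $\delta<\theta$, so $\varepsilon^{\delta}$ can never dominate $\varepsilon^{\theta}$ and no choice of $\varepsilon_k$ makes your quotient tend to $0$. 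The paper's argument is organized so that this comparison is never needed: one fixes $\lambda>0$, evaluates the fiber map at the optimal scale $m_{N,s}^k$, where $f_{N,s}^k(m_{N,s}^k)\to 0$, and in that accounting both correction terms enter the upper bound for $\mathcal{J}_{a_k,b_k}^{\lambda,u_\varepsilon}(m_{N,s}^k)$ with a favorable sign, so the fiber value is negative for $k$ large and $\varepsilon$ small; this gives $\lambda_k\leq\lambda_0^s(u_\varepsilon)<\lambda$, and only afterwards does one let $\lambda\to 0$. If you keep the quotient formulation you must track the sign of the deficit term rather than majorize it by $+C\varepsilon^{\theta}$.

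The second gap concerns the last two paragraphs. The identity $\mathcal{I}_{a_k,b_k}(u_k)=\lambda_k\int_\Omega G(x,u_k)\,dx$ is not what $\lambda_k=\lambda_0^s(u_k)$ gives you: the system \eqref{eq23} yields $\mathcal{J}_{a_k,b_k}^{\lambda_k,u_k}(\zeta_k)=0$ at the optimal fibering parameter $\zeta_k$, not at $\zeta=1$. Moreover, since $\lambda_0^s$ is homogeneous of degree zero, the sequence $(u_k)$ is only determined up to rescaling and need not be bounded, so coercivity gives nothing; one must first normalize $\Vert u_k\Vert=1$, as the paper does. Finally, the convergence of the Sobolev quotient is asserted (``the decomposition collapses'') rather than proved; the concrete mechanism in the paper is to pass to the limit in the scalar relation \eqref{eq32}, obtaining $\frac{a}{2}+\frac{b}{4}\overline{\zeta}^2-\frac{\gamma}{2^*_s}\overline{\zeta}^{2^*_s-2}=0$ with $\gamma=\lim_k\Vert u_k\Vert_{2^*_s}^{2^*_s}\leq S_{N,s}^{-2^*_s/2}$, and to note that $a^{(N-4s)/2s}b=L_{N,s}$ forces $\gamma=S_{N,s}^{-2^*_s/2}$, which is exactly the statement that $(u_k)$ minimizes the Sobolev quotient. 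These gaps are fixable, but as written both halves of the argument are incomplete.
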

Next statement shows the existence of solution of mountain pass type when $\lambda \geq \overline{\lambda}_0^s$.

\begin{theorem} \label{th7} If $\lambda \geq \overline{\lambda}_0^s$,
  then there exists a
  $v_{\lambda}^s \in X_0^s{\Omega} \setminus \lbrace 0 \rbrace$ such
  that $\mathcal{I}_{a,b}^{\lambda}(v_{\lambda}^s)=c_{\lambda}^s$ and
  $\left(\mathcal{I}_{a,b}^{\lambda}\right)' (v_{\lambda}^s)=0$ where
\[
c_{\lambda}^s:= \inf_{h \in \Gamma_{\lambda}^s} \max_{\zeta \in \left[ 0,1 \right]} \mathcal{I}_{a,b}^{\lambda}(h(\zeta))
\]
and
\[
\Gamma_{\lambda}^s:=\left\lbrace h \in C\left( \left[0,1\right], X_0^s(\Omega) \right)  \mid h(0)=0, \ h(1)=u_{\overline{\lambda}_0^s}^s \right\rbrace.
\]
\end{theorem}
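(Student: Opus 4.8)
The plan is to obtain $v_\lambda^s$ as a mountain pass critical point of $\mathcal{I}_{a,b}^\lambda$, so the first task is to verify that this functional has the mountain pass geometry relative to the pair of points $0$ and $u_{\overline{\lambda}_0^s}^s$. Since $\mathcal{I}_{a,b}^\lambda(0)=0$, I would first produce $\rho,\alpha>0$ with $\mathcal{I}_{a,b}^\lambda(u)\geq\alpha$ whenever $\|u\|=\rho$: near the origin the term $\frac{a}{2}\|u\|^2$ dominates, while the critical term is controlled through the Sobolev inequality $\|u\|_{2^*_s}^{2^*_s}\leq S_{N,s}^{-2^*_s/2}\|u\|^{2^*_s}$ and the contribution of $G$ is absorbed using the subcritical growth hypotheses on $g$; as $2^*_s>2$ and the $g$-term is of higher order, positivity on a small sphere follows. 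For the second endpoint I would note that $\iota_\lambda^s<0$ for $\lambda>\overline{\lambda}_0^s$ and, by the monotonicity of $\lambda\mapsto\mathcal{I}_{a,b}^\lambda(u_{\overline{\lambda}_0^s}^s)$ granted by the sign of $\int_\Omega G(x,u_{\overline{\lambda}_0^s}^s)\,dx$ together with Theorem~\ref{th6}, that $\mathcal{I}_{a,b}^\lambda(u_{\overline{\lambda}_0^s}^s)\leq\mathcal{I}_{a,b}^{\overline{\lambda}_0^s}(u_{\overline{\lambda}_0^s}^s)=0$; being nontrivial, $u_{\overline{\lambda}_0^s}^s$ satisfies $\|u_{\overline{\lambda}_0^s}^s\|>\rho$ after shrinking $\rho$. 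This yields the geometry and $c_\lambda^s\geq\alpha>0$.

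With the geometry in hand, the Ambrosetti--Rabinowitz theorem furnishes a Palais--Smale sequence $(u_n)\subset X_0^s(\Omega)$ at the level $c_\lambda^s$. The next step is boundedness: combining $\mathcal{I}_{a,b}^\lambda(u_n)\to c_\lambda^s$ with $(\mathcal{I}_{a,b}^\lambda)'(u_n)[u_n]=o(\|u_n\|)$ and exploiting the Ambrosetti--Rabinowitz-type condition on $g$, the estimate $c_\lambda^s+o(\|u_n\|)\geq\mathcal{I}_{a,b}^\lambda(u_n)-\frac{1}{2^*_s}(\mathcal{I}_{a,b}^\lambda)'(u_n)[u_n]$ isolates a positive multiple of $\|u_n\|^2$ on the right-hand side, whence $(u_n)$ is bounded. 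Up to a subsequence $u_n\weakto v_\lambda^s$ in $X_0^s(\Omega)$, $u_n\to v_\lambda^s$ in every subcritical $L^p(\Omega)$ and a.e.; passing to the limit in \eqref{eq19} and using the compactness of the subcritical perturbation shows that $v_\lambda^s$ is a critical point of $\mathcal{I}_{a,b}^\lambda$.

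The decisive point, and the one I expect to be the main obstacle, is upgrading weak to strong convergence, equivalently verifying the Palais--Smale condition exactly at the level $c_\lambda^s$. Since the subcritical perturbation is compact, the validity of this condition for $\mathcal{I}_{a,b}^\lambda$ reduces to that of the unperturbed functional, so I would invoke Theorem~\ref{th1}$(ii)$ when $a^{(N-4s)/2s}b\geq\mathsf{PS}_{N,s}$; in the borderline regime $L_{N,s}\leq a^{(N-4s)/2s}b<\mathsf{PS}_{N,s}$ one must instead show that $c_\lambda^s$ lies strictly below the first threshold at which compactness can fail. For this I would apply the fractional second concentration-compactness principle of Palatucci and Pisante to $(u_n)$: writing the possible loss of mass as atoms carrying weight at least $S_{N,s}$, a direct energy computation rules out concentration once $c_\lambda^s$ is smaller than the bubble energy associated with $S_{N,s}$ and the Kirchhoff coefficients. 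To secure this strict inequality I would estimate $c_\lambda^s$ from above along the segment $\zeta\mapsto\zeta\,u_{\overline{\lambda}_0^s}^s$ and along a truncated Talenti bubble, exploiting that the subcritical term carries a factor $\lambda\geq\overline{\lambda}_0^s$ which depresses the maximal energy along admissible paths. Once concentration is excluded, $\|u_n\|_{2^*_s}\to\|v_\lambda^s\|_{2^*_s}$, hence $u_n\to v_\lambda^s$ strongly, giving $\mathcal{I}_{a,b}^\lambda(v_\lambda^s)=c_\lambda^s$ and $(\mathcal{I}_{a,b}^\lambda)'(v_\lambda^s)=0$. Finally $v_\lambda^s\neq0$ because $\mathcal{I}_{a,b}^\lambda(v_\lambda^s)=c_\lambda^s\geq\alpha>0=\mathcal{I}_{a,b}^\lambda(0)$.
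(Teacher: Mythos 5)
Your overall strategy---verify a mountain pass geometry relative to the pair $0$ and $u_{\overline{\lambda}_0^s}^s$, check the Palais--Smale condition, and apply the Ambrosetti--Rabinowitz theorem---is exactly the paper's. The geometry step matches the paper's proof of Theorem~\ref{th7}: estimate \eqref{eq33} produces the positive ring around the origin, and Proposition~\ref{prop7} (equivalently, the nonnegativity of $G$ coming from $(H_2)$, as you note) gives $\mathcal{I}_{a,b}^{\lambda}(u_{\overline{\lambda}_0^s}^s)\le 0$, with equality only at $\lambda=\overline{\lambda}_0^s$; the conclusion $v_\lambda^s\neq 0$ from $c_\lambda^s>0$ is the same.

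There is, however, one step that fails as written: the boundedness of the Palais--Smale sequence. No Ambrosetti--Rabinowitz condition on $g$ appears among $(H_1)$--$(H_4)$, and, more importantly, the quantity $\mathcal{I}_{a,b}^\lambda(u_n)-\frac{1}{2^*_s}(\mathcal{I}_{a,b}^\lambda)'(u_n)[u_n]$ does \emph{not} isolate a positive multiple of $\Vert u_n\Vert^2$: since $N>4s$ one has $2^*_s<4$, so the Kirchhoff term contributes $b\left(\frac14-\frac{1}{2^*_s}\right)\Vert u_n\Vert^4$ with a \emph{negative} coefficient, and the resulting inequality is perfectly compatible with $\Vert u_n\Vert\to\infty$. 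The correct and much simpler observation, used repeatedly in the paper (proofs of Theorem~\ref{th1}~$(ii)$ and Theorem~\ref{th4}), is that $2^*_s<4$ makes $\mathcal{I}_{a,b}^\lambda$ coercive, so every PS sequence is automatically bounded; your argument should be replaced by this. On the compactness step you are in fact more cautious than the paper: the paper simply invokes Lemma~\ref{lemma1}$(2)$, whose hypothesis $a^{(N-4s)/2s}b>\mathsf{PS}_{N,s}$ is strictly stronger than the standing assumption $a^{(N-4s)/2s}b>L_{N,s}$ declared for this part of Section~\ref{sec:3}; your suggestion to handle the intermediate regime by pushing $c_\lambda^s$ below the concentration threshold via Talenti bubbles is a plausible outline for closing that gap, but it is only sketched and is not what the paper does.
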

Finally we focus on the case $\lambda \in (\overline{\lambda}_0^s-\delta, \overline{\lambda}_0^s)$, for some small~$\delta>0$.
\begin{theorem} \label{th8}
There exist $\delta>0$, $r>0$ such that for any
  $\lambda \in (\overline{\lambda}_0^s-\delta, \overline{\lambda}_0^s
  )$ the value 
  \[
\hat{\iota}_{\lambda}^s:= \inf \left\lbrace \mathcal{I}_{a,b}^{\lambda}(u) \mid u \in X_0^s(\Omega), \ \Vert u \Vert \geq r \right\rbrace
\]
is attained at a function
  $w_{\lambda}^s \in X_0^s(\Omega)$ satisfying
  $\Vert w_{\lambda}^s \Vert > r$.
\end{theorem}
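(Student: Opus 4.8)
The plan is to produce $w_\lambda^s$ as the weak limit of a minimizing sequence for $\hat\iota_\lambda^s$, using the geometry of $\mathcal I_{a,b}^{\overline\lambda_0^s}$ at the threshold to prevent the sequence from collapsing into the ball $\{\Vert u\Vert<r\}$. I work under the strict inequality of Theorem~\ref{th6}$(i)$, which provides a nontrivial minimizer $u_0:=u_{\overline\lambda_0^s}^s$ with $\mathcal I_{a,b}^{\overline\lambda_0^s}(u_0)=0$; put $r_0:=\Vert u_0\Vert$. Two facts at $\lambda=\overline\lambda_0^s$ are recorded first. Since $\iota_{\overline\lambda_0^s}^s=0$, the functional $\mathcal I_{a,b}^{\overline\lambda_0^s}$ is nonnegative on $X_0^s(\Omega)$. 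Moreover, by the superlinearity of $g$ at the origin one has $\mathcal I_{a,b}^{\overline\lambda_0^s}(u)\geq\Vert u\Vert^2\big(\tfrac a2-o(1)\big)$ as $\Vert u\Vert\to0$, so there is $\rho_0>0$ such that every nontrivial zero of $\mathcal I_{a,b}^{\overline\lambda_0^s}$ has norm at least $\rho_0$. I then fix $r\in(0,\rho_0)$; since $r_0\geq\rho_0>r$, the function $u_0$ belongs to the admissible set $\{\Vert u\Vert\geq r\}$.

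Since $N>4s$ gives $2^*_s<4$, the quartic Kirchhoff term dominates and $\mathcal I_{a,b}^\lambda$ is coercive; hence $\hat\iota_\lambda^s$ is finite and any minimizing sequence $(u_n)$ with $\Vert u_n\Vert\geq r$ is bounded, so $u_n\rightharpoonup w$ along a subsequence and Theorem~\ref{th1}$(i)$ yields $\mathcal I_{a,b}^\lambda(w)\leq\hat\iota_\lambda^s$. Testing with $u_0$ gives the upper bound
\[
\hat\iota_\lambda^s\leq\mathcal I_{a,b}^\lambda(u_0)=(\overline\lambda_0^s-\lambda)\int_\Omega G(x,u_0)\,dx=:(\overline\lambda_0^s-\lambda)\,C_0,
\]
which tends to $0$ as $\lambda\uparrow\overline\lambda_0^s$.

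The main obstacle is that $\{\Vert u\Vert\geq r\}$ is not weakly closed, so a priori $\Vert w\Vert\leq r$; since the nonlinearity is critical this cannot be ruled out by compactness of the embedding, and I would instead use the fractional concentration–compactness principle of Palatucci and Pisante~\cite{MR3216834}. Writing $u_n=w+v_n$ with $v_n\rightharpoonup0$, the Brezis–Lieb splitting gives $\Vert u_n\Vert^2=\Vert w\Vert^2+\Vert v_n\Vert^2+o(1)$ and $\Vert u_n\Vert_{2^*_s}^{2^*_s}=\Vert w\Vert_{2^*_s}^{2^*_s}+\Vert v_n\Vert_{2^*_s}^{2^*_s}+o(1)$, while $\int_\Omega G(x,u_n)\,dx\to\int_\Omega G(x,w)\,dx$. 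With $\beta:=\lim\Vert v_n\Vert^2$, discarding a nonnegative cross term and applying the Sobolev inequality $\Vert v_n\Vert_{2^*_s}^{2^*_s}\leq S_{N,s}^{-2^*_s/2}\Vert v_n\Vert^{2^*_s}$ to the remainder, one finds
\[
\hat\iota_\lambda^s\geq\mathcal I_{a,b}^\lambda(w)+\psi(\beta),\qquad\psi(\beta):=\frac a2\beta+\frac b4\beta^2-\frac{S_{N,s}^{-2^*_s/2}}{2^*_s}\,\beta^{2^*_s/2},
\]
and admissibility forces $\beta\geq r^2-\Vert w\Vert^2$. Crucially, $a^{(N-4s)/2s}b\geq L_{N,s}$ is precisely the inequality $\psi\geq0$ on $[0,\infty)$ behind Theorem~\ref{th1}$(i)$, and the strict version gives $\psi>0$ on $(0,\infty)$.

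It remains to exclude $\Vert w\Vert\leq r$ for $\delta$ small. The key quantitative claim is
\[
m_0:=\inf\Big\{\mathcal I_{a,b}^{\overline\lambda_0^s}(w)+\psi(\beta):\ \Vert w\Vert\leq r,\ \beta\geq\max\{r^2-\Vert w\Vert^2,\,0\}\Big\}>0.
\]
To prove it, suppose $m_0=0$ and take $(w_j,\beta_j)$ realizing the infimum; as both summands are nonnegative, $\psi(\beta_j)\to0$, and since the only zero of $\psi$ is $0$ and $\psi(\beta)\to\infty$, necessarily $\beta_j\to0$, hence $\Vert w_j\Vert\to r$ and $\mathcal I_{a,b}^{\overline\lambda_0^s}(w_j)\to0$. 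A weak limit $\tilde w$ of $(w_j)$ satisfies $\mathcal I_{a,b}^{\overline\lambda_0^s}(\tilde w)\leq0$ by Theorem~\ref{th1}$(i)$, hence is a zero with $\Vert\tilde w\Vert\leq r<\rho_0$, so $\tilde w=0$; but $w_j\rightharpoonup0$ with $\Vert w_j\Vert\to r$ and $\int_\Omega G(x,w_j)\,dx\to0$ give $\mathcal I_{a,b}^{\overline\lambda_0^s}(w_j)\to\frac a2r^2+\frac b4r^4-\frac1{2^*_s}\lim\Vert w_j\Vert_{2^*_s}^{2^*_s}\geq\psi(r^2)>0$, a contradiction. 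Finally, on the bounded set $\{\Vert w\Vert\leq r\}$ the quantity $C_2:=\sup\big|\int_\Omega G(x,w)\,dx\big|$ is finite, so $\mathcal I_{a,b}^\lambda(w)\geq\mathcal I_{a,b}^{\overline\lambda_0^s}(w)-(\overline\lambda_0^s-\lambda)C_2$; combining with the displayed lower bound, $\Vert w\Vert\leq r$ would give $\hat\iota_\lambda^s\geq m_0-(\overline\lambda_0^s-\lambda)C_2$. Choosing $\delta>0$ with $\delta(C_0+C_2)<m_0$ then contradicts $\hat\iota_\lambda^s\leq(\overline\lambda_0^s-\lambda)C_0$. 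Hence $\Vert w\Vert>r$, so $w$ is admissible, $\mathcal I_{a,b}^\lambda(w)=\hat\iota_\lambda^s$ by weak lower semicontinuity, and $w_\lambda^s:=w$ is the desired minimizer. The hardest point is exactly $m_0>0$: quantifying how the critical loss of compactness is penalized by the strict Kirchhoff threshold.
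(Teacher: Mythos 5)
Your proof is correct, but it reaches the minimizer by a different mechanism than the paper. The paper's route is much shorter: Proposition~\ref{prop8} supplies $r,M>0$ with $\inf_{\Vert u\Vert=r}\mathcal{I}_{a,b}^{\lambda}\geq M$ uniformly in $\lambda\leq\overline{\lambda}_0^s$, Remark~\ref{rem5} gives $\hat{\iota}_{\lambda}^s\to0$ as $\lambda\to\overline{\lambda}_0^s$ (your upper bound $(\overline{\lambda}_0^s-\lambda)C_0$ is the same computation), so for $\delta$ small every minimizing sequence stays a definite distance outside the sphere; the minimizer is then produced by Ekeland's variational principle combined with the Palais--Smale condition of Lemma~\ref{lemma1}$(2)$. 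You share the same geometric skeleton --- a uniform positive barrier near $\Vert u\Vert=r$ against an infimum tending to $0$ --- but you replace the Ekeland/PS step by a Brezis--Lieb splitting $u_n=w+v_n$ of the minimizing sequence together with the quantitative barrier $m_0>0$, and you obtain the minimizer as the weak limit using only sequential weak lower semicontinuity. This buys something real: the compactness in Lemma~\ref{lemma1}$(2)$ requires $a^{(N-4s)/2s}b>\mathsf{PS}_{N,s}$, which is strictly stronger than the standing assumption $a^{(N-4s)/2s}b>L_{N,s}$ of this part of Section~\ref{sec:3} (one checks $L_{N,s}<\mathsf{PS}_{N,s}$ for $N>4s$), whereas your argument never leaves the weak lower semicontinuity threshold. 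The price is the more delicate lemma $m_0>0$, whose contradiction argument is sound: $\psi(\beta_j)\to0$ forces $\beta_j\to0$ because $\psi=\beta f_{N,s}(\sqrt{\beta})$ is strictly positive away from the origin under the strict inequality, the weak limit of $(w_j)$ must be a zero of $\mathcal{I}_{a,b}^{\overline{\lambda}_0^s}$ of norm below $\rho_0$ and hence trivial, and then $\liminf_j\mathcal{I}_{a,b}^{\overline{\lambda}_0^s}(w_j)\geq r^2f_{N,s}(r)>0$ contradicts the vanishing. Two cosmetic remarks: the Palatucci--Pisante concentration--compactness principle you announce is never actually needed (the Hilbert-space identity for $\Vert\cdot\Vert^2$ and the Brezis--Lieb lemma for $\Vert\cdot\Vert_{2^*_s}^{2^*_s}$ suffice, as your own computation shows), and the sign $C_0\geq0$ you implicitly use in the choice of $\delta$ does hold, since $(H_2)$ forces $G\geq0$.
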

\begin{theorem} \label{th9}
For any $\lambda \in (\overline{\lambda}_0^s-\delta, \overline{\lambda}_0^s )$ there is $v_{\lambda}^s \in X_0^s(\Omega) \setminus \lbrace 0 \rbrace$ such that $\mathcal{I}_{a,b}^{\lambda}(v_{\lambda}^s)=c_{\lambda}^s$ and $\left(\mathcal{I}_{a,b}^{\lambda}\right)'(v_{\lambda}^s)=0$, where
\begin{equation*}
c_{\lambda}^s:= \inf_{h \in \Gamma_{\lambda}^s} \max_{\zeta \in \left[ 0,1 \right]} \mathcal{I}_{a,b}^{\lambda} (h(\zeta))
\end{equation*}
and
\begin{equation*}
\Gamma_{\lambda}^s:= \left\lbrace h \in C\left( \left[0,1\right], X_0^s(\Omega) \right): h(0)=0, \ h(1)=w_{\lambda}^s \right\rbrace .
\end{equation*}
\end{theorem}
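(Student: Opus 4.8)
The plan is to realize $v_\lambda^s$ as a mountain pass critical point of $\mathcal{I}_{a,b}^{\lambda}$ whose two valleys are the origin and the minimizer $w_\lambda^s$ furnished by Theorem~\ref{th8}; the scheme is the same as for Theorem~\ref{th7}, only the endpoint of the admissible paths and the range of $\lambda$ change. First I would record that $\mathcal{I}_{a,b}^{\lambda}\in C^1(X_0^s(\Omega))$ with $\mathcal{I}_{a,b}^{\lambda}(0)=0$, and that $w_\lambda^s$ is already a nonzero critical point: since Theorem~\ref{th8} gives $\Vert w_\lambda^s\Vert>r$, the constrained minimum $\hat{\iota}_\lambda^s$ of $\mathcal{I}_{a,b}^{\lambda}$ over $\{u:\Vert u\Vert\geq r\}$ is attained at an interior point, so that $\left(\mathcal{I}_{a,b}^{\lambda}\right)'(w_\lambda^s)=0$.

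Next I would establish the mountain pass geometry. Since $\lambda<\overline{\lambda}_0^s$, the definition of the threshold together with Theorems~\ref{th4} and~\ref{th6} gives $\mathcal{I}_{a,b}^{\lambda}(u)>0$ for all $u\neq 0$, whence $\hat{\iota}_\lambda^s=\mathcal{I}_{a,b}^{\lambda}(w_\lambda^s)>0=\mathcal{I}_{a,b}^{\lambda}(0)$. For any $h\in\Gamma_\lambda^s$ the continuous function $\zeta\mapsto\Vert h(\zeta)\Vert$ satisfies $\Vert h(0)\Vert=0<r<\Vert w_\lambda^s\Vert=\Vert h(1)\Vert$, so $h$ meets the sphere $\{\Vert u\Vert=r\}$; as this sphere lies in $\{\Vert u\Vert\geq r\}$ we obtain $\max_{\zeta\in[0,1]}\mathcal{I}_{a,b}^{\lambda}(h(\zeta))\geq\inf_{\Vert u\Vert=r}\mathcal{I}_{a,b}^{\lambda}\geq\hat{\iota}_\lambda^s$, and therefore $c_\lambda^s\geq\hat{\iota}_\lambda^s>0$. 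In particular any critical point detected at level $c_\lambda^s$ is automatically nonzero.

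The crux of the argument, and the step I expect to be the main obstacle, is the Palais--Smale condition at the level $c_\lambda^s$, together with the strict separation $\inf_{\Vert u\Vert=r}\mathcal{I}_{a,b}^{\lambda}>\hat{\iota}_\lambda^s$ needed to place the pass strictly above the two valleys. Boundedness of a $(\mathrm{PS})_{c_\lambda^s}$ sequence is not problematic: because $N>4s$ one has $2<2^*_s<4$, so the Kirchhoff term $\tfrac{b}{4}\Vert u\Vert^4$ dominates the critical term through the Sobolev inequality~\eqref{Sobolev} and, with the subcritical bound on $G$, makes $\mathcal{I}_{a,b}^{\lambda}$ coercive. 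After extracting a weak limit, the perturbation is compact thanks to the subcritical growth of $g$ and the compact embeddings $X_0^s(\Omega)\hookrightarrow L^q(\Omega)$ with $q<2^*_s$, so the loss of compactness is confined to the critical term. At this point I would apply the fractional second concentration--compactness principle of Palatucci--Pisante~\cite{MR3216834}, exactly as in Theorem~\ref{th1}$(ii)$: concentration is ruled out once the energy stays below the critical threshold measured by $\mathsf{PS}_{N,s}$, and the same argument excludes a minimizing sequence on $\{\Vert u\Vert=r\}$ converging weakly to $0$, thereby yielding the strict separation. The delicate quantitative point is thus to keep $c_\lambda^s$ below this threshold on the whole interval $(\overline{\lambda}_0^s-\delta,\overline{\lambda}_0^s)$; bounding $c_\lambda^s$ from above along the segment $\zeta\mapsto\zeta w_\lambda^s$ and exploiting the a priori smallness of $c_\lambda^s$ as $\lambda\uparrow\overline{\lambda}_0^s$, one shrinks $\delta$ if necessary. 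Once compactness is secured, the mountain pass theorem produces $v_\lambda^s$ with $\mathcal{I}_{a,b}^{\lambda}(v_\lambda^s)=c_\lambda^s$ and $\left(\mathcal{I}_{a,b}^{\lambda}\right)'(v_\lambda^s)=0$, nonzero since $c_\lambda^s>0$.
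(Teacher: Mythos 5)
Your proposal follows the same skeleton as the paper's proof: a mountain pass between the valleys $0$ and $w_{\lambda}^s$, with the barrier supplied by \eqref{eq34} and compactness by Lemma~\ref{lemma1}, so the argument is essentially correct. Where you diverge is in how you treat compactness and the separation of levels, and here you manufacture an obstacle that the paper's framework has already removed. You read $\mathsf{PS}_{N,s}$ as an energy threshold and therefore propose to bound $c_{\lambda}^s$ from above along $\zeta\mapsto\zeta w_{\lambda}^s$ and to shrink $\delta$ to keep the pass below it; but $\mathsf{PS}_{N,s}$ is a threshold on the product $a^{(N-4s)/2s}b$, not on the critical level, and Theorem~\ref{th1}$(ii)$ (hence Lemma~\ref{lemma1}$(2)$) gives the Palais--Smale condition at \emph{every} level $c\in\R$ under the standing assumption on $a,b$. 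So no mountain-pass-level estimate is needed. Similarly, the strict separation $\inf_{\Vert u\Vert=r}\mathcal{I}_{a,b}^{\lambda}>\hat{\iota}_{\lambda}^s$ does not require a concentration--compactness analysis on the sphere: Proposition~\ref{prop8} gives $\inf_{\Vert u\Vert=r}\mathcal{I}_{a,b}^{\lambda}\geq M$ with $M$ independent of $\lambda$, and $\delta$ in Theorem~\ref{th8} is chosen precisely so that $\hat{\iota}_{\lambda}^s<M$ on $(\overline{\lambda}_0^s-\delta,\overline{\lambda}_0^s)$, using $\hat{\iota}_{\lambda}^s\to 0$ as $\lambda\to\overline{\lambda}_0^s$ (Remark~\ref{rem5}). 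Your extra machinery is therefore redundant rather than wrong; the paper's proof is the two-line observation that $\min\lbrace\mathcal{I}_{a,b}^{\lambda}(0),\mathcal{I}_{a,b}^{\lambda}(w_{\lambda}^s)\rbrace<M$, $\Vert w_{\lambda}^s\Vert>r$, and \eqref{eq34} give the geometry, after which the classical mountain pass theorem applies directly. Your additional remarks that $c_{\lambda}^s>0$ forces $v_{\lambda}^s\neq 0$, and that $w_{\lambda}^s$ is itself a critical point, are correct and consistent with the paper.
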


Our paper is organized as follows: in Section \ref{sec:1} we present the classic Kirchhoff model, its generalization to the non local case and we collect in a synthetic way our main results. In Section \ref{sec:2} we prove for the functional associated to the problem with $g(x,u)=0$ the
weak lower semicontinuity, the validity of the Palais-Smale condition
and the convexity under suitable assumption on the parameters $a$ and
$b$. Since the perturbation $g$ will have a subcritical growth, we
prove these conditions for the problem with the pure power
in order to ease notation. The general case requires 
only minor adjustments. In Section \ref{sec:3} we prove the existence
of global minimizers, local minimizers and mountain pass type
solutions with different energy level at varying of the parameter
$\lambda$. At the end of Section \ref{sec:3}, strengthening the
hypothesis on the non linear term $g$, we are able to give also a non
existence result for problem \eqref{eq:Pablambda}.

\section{Semicontinuity and the validity of the Palais-Smale
  condition} \label{sec:2}

In this section we completely describe the range of parameters $a$ and
$b$ for which the functional $\mathcal{I}_{a,b}$ associated to the
problem
\begin{equation} \label{eq:Pab}
\tag{$P_{a,b}$}
\begin{cases}  \displaystyle\left( a+b \int_{\mathcal{Q}} \frac{|u(x)-u(y)|^2}{|x-y|^{n+2s}}\, dx\, dy \right)(-\Delta)^s u = |u|^{2^*_s-2}u &\hbox{in}\ \Omega \\
  u=0 & \mbox{in} \ \R^N \setminus \Omega
\end{cases}
\end{equation}
is (sequentially) weakly lower semicontinuous.

\begin{proof}[Proof of Theorem \ref{th1} $(i)$]
  We assume that $a^{\frac{N-4s}{2s}}b \geq L_{N,s}$, and we
  choose a sequence $(u_n)_n \subset X_0^s(\Omega)$ such that
  $u_n \rightharpoonup u$. Since the embedding
  $X_0^s(\Omega) \hookrightarrow L^p(\Omega)$ is compact (see for
  instance \cite[Lemma 9]{MR3271254}), $u_n$ converges to $u$
  strongly in $L^p(\Omega)$ for any~$p \in \left[1,2^*_s \right)$. We
  notice that
  \begin{multline} \label{eq2} \Vert u_n-u \Vert^2+2\langle
    u_n-u,u\rangle_{X_0^s(\Omega)}=\langle
    u_n,u_n\rangle_{X_0^s(\Omega)}+\langle
    u,u\rangle_{X_0^s(\Omega)}-2\langle u_n,u\rangle_{X_0^s(\Omega)}
     \\ {} + 2 \langle u_n,u\rangle_{X_0^s(\Omega)} -2\langle
    u,u\rangle_{X_0^s(\Omega)}=\Vert u_n \Vert^2-\Vert u \Vert^2.
\end{multline}
Hence
\begin{gather*}
  \Vert u_n \Vert^2-\Vert u \Vert^2=\Vert u_n-u \Vert^2+2\langle
  u_n-u,u\rangle_{X_0^s(\Omega)}= \Vert u_n-u \Vert^2+o(1)
\end{gather*}
as $n \to \infty$.
After that, we compute
\begin{align} \label{eq3} \Vert u_n \Vert^4- \Vert u \Vert^4 &=\left(
    \Vert u_n \Vert^2- \Vert u \Vert^2 \right)\left( \Vert u_n
    \Vert^2+ \Vert u \Vert^2 \right)
                                                               \notag \\
                                                             &= \left(
                                                               \Vert
                                                               u_n-u
                                                               \Vert^2
                                                               +o(1)
                                                               \right)\left(
                                                               \Vert
                                                               u_n-u
                                                               \Vert^2
                                                               +
                                                               2\Vert
                                                               u
                                                               \Vert^2
                                                               + o(1)
                                                               \right).
\end{align}
Finally, using the Brezis-Lieb Lemma (see~\cite[Theorem 1]{MR699419}),
we have
\begin{equation} \label{eq4}
  \Vert u_n-u\Vert^{2^*_s}_{2^*_s}=\Vert
  u_n\Vert^{2^*_s}_{2^*_s}-\Vert u\Vert^{2^*_s}_{2^*_s} + o(1)
\end{equation}
as $n \to \infty$. Putting together \eqref{eq2}, \eqref{eq3},
\eqref{eq4} and the Sobolev inequality~\eqref{Sobolev} we obtain
\begin{align} \label{eq5}
  \mathcal{I}_{a,b}(u_n)-\mathcal{I}_{a,b}(u)&=\frac{a}{2}\left( \Vert
    u_n \Vert^2-\Vert u \Vert^2 \right)+\frac{b}{4}\left( \vert u_n
    \Vert^4-\Vert u \Vert^4 \right)-\frac{1}{2^*_s}\left( \Vert u_n
    \Vert^{2^*_s}_{2^*_s}-\Vert u \Vert^{2^*_s}_{2^*_s} \right)\notag
  \\ \notag &=\frac{a}{2}\Vert u_n-u \Vert^2+\frac{b}{4}\left(\Vert
    u_n-u \Vert^4+2 \Vert u \Vert^2 \Vert u_n-u \Vert^2 \right)\\
  \notag &-\frac{1}{2^*_s}\Vert u_n-u \Vert^{2^*_s}_{2^*_s}+o(1) \\
  \notag
                                             &\geq \frac{a}{2}\Vert u_n-u \Vert^2+\frac{b}{4}\Vert u_n-u \Vert^4-\frac{S_{N,s}^{-\frac{2^*_s}{2}}}{2^*_s}\Vert u_n-u \Vert^{2^*_s}+o(1)\\
                                             &= \Vert u_n-u \Vert^2
                                               \left[
                                               \frac{a}{2}+\frac{b}{4}\Vert
                                               u_n-u
                                               \Vert^2-\frac{S_{N,s}}{2^*_s}\Vert
                                               u_n-u \Vert^{2^*_s-2}
                                               \right] +o(1)
\end{align}
as $n \to \infty$. At this point, we introduce the auxiliary function
\begin{gather*}
  f_{N,s}(\zeta)=\frac{a}{2}+\frac{b}{4}\zeta^2-\frac{S_{N,s}^{-\frac{2^*_s}{2}}}{2^*_s}\zeta^{2^*_s-2},
  \quad \zeta \geq 0.
\end{gather*}
It is easy to verify that the function $f_{N,s}$ attains its minimum
at the point
\begin{gather*}
m_{N,s}=\left(\frac{b}{2} \frac{2^*_s }{2^*_s-2}S_{N,s}^{\frac{2^*_s}{2}}\right)^{\frac{1}{2^*_s-4}},
\end{gather*}
and that
\begin{equation} \label{eq6}
a^{\frac{N-4s}{2s}}b\geq L_{N,s} \Leftrightarrow f_{N,s}(m_{N,s})=\frac{1}{2}\left( a-b^{-\frac{2s}{N-4s}}L_{N,s}^{\frac{2s}{N-4s}}\right)\geq 0
\end{equation}
From \eqref{eq5} and \eqref{eq6} it follows that
\[
\liminf_{n \to \infty} \left( \mathcal{I}_{a,b}(u_n)-\mathcal{I}_{a,b}(u) \right) \geq \liminf_{n \to \infty} \Vert u_n - u \Vert^2 f_{N,s}(\Vert u_n-u \Vert) \geq 0,
\]
which concludes this part of the proof.

Conversely, we proceed by contradiction, assuming that the functional
$\mathcal{I}_{a,b}$ is sequentially weakly lower semicontinuous
but
\begin{equation} \label{eq7} a^{\frac{N-4s}{2s}}b<L_{N,s}
\end{equation}
Let~$\{u_n\}_n \subset X_0^s(\Omega)$ be a minimizing sequence for
$S_{N,s}$. By homogeneity we may assume furthermore that
$\|u_n\|_{2_s^*}=1$ for every $n$, so that we deduce that the sequence
$\{u_n\}_n$ must be bounded. Up to a subsequence, we have that
$u_n \rightharpoonup u $ in $X_0^s(\Omega)$ for some
$u \in X_0^s(\Omega) \setminus \lbrace 0\rbrace$. Besides, exploiting
the weak lower semicontinuity of the norm, we have that
$\Vert u \Vert \leq \liminf_{n \to \infty} \Vert u_n \Vert =:L$ and
there exists a subsequence $\{u_{n_k}\}_k$ such that
$L = \lim_{k \to \infty} \Vert u_{n_k} \Vert$. We point out that
$L>0$, since $u \neq 0$. Now $N>4s$ implies that $0<2^*_s-2<2$, and
$\lim_{x \to +\infty} f_{N,s}(x) =+\infty$.  As we have already seen,
the function $f_{N,s}$ attains its minimum at the point $m_{N,s}$ and
this, together with \eqref{eq7}, implies $f_{N,s}(m_{N,s})<0$. Set
$c=m_{N,s}/L>0$. We notice that
\begin{align} \label{eq8}
\liminf_{n \to \infty} \mathcal{I}_{a,b}(cu_n)& \leq \liminf_{k \to \infty} \mathcal{I}_{a,b}(c u_{n_k}) \notag \\ \notag
& = \liminf_{k \to \infty} \Vert c u_{n_k} \Vert^2f_{N,s}(\Vert c u_{n_k} \Vert)=(cL)^2f_{N,s}(cL)\\
& =(cL)^2 f_{N,s}(m_{N,s}) \leq \Vert cu \Vert^2 f_{N,s}(m_{N,s}) \leq \Vert cu \Vert^2 f_{N,s}(\Vert cu \Vert).
\end{align}
We also have that
\begin{align}
\notag \Vert cu \Vert^2 f_{N,s}(\Vert cu \Vert) &= \frac{a}{2} \Vert cu \Vert^2+\frac{b}{4}\Vert cu \Vert^4-\frac{S_{N,s}^{2^*_s/2}}{2^*_s}\Vert cu \Vert \\
& \leq \frac{a}{2} \Vert cu \Vert^2 + \frac{b}{4}\Vert cu \Vert^4-\frac{1}{2^*_s}\int_{\Omega}|cu|^{2^*_s} \, dx = \mathcal{I}_{a,b}(cu).
\end{align}
Comparing \eqref{eq7} with \eqref{eq8} we get
\begin{equation} \label{eq9}
\liminf_{n \to \infty} \mathcal{I}_{a,b}(c u_n) \leq \mathcal{I}_{a,b}(cu).
\end{equation}
We claim that a strict inequality holds in \eqref{eq9}. Indeed, if we
had equality, the function~$cu$ would attain the minimum in
\eqref{Sobolev}. This is impossible, since $\Omega \neq \R^N$
(see~\cite[Theorem 1.1]{MR2064421}). The proof is complete.
\end{proof}
%

\begin{proof}[Proof of Theorem \ref{th1} $(ii)$]
  Let $\{u_n\}_n \subset X_0^s(\Omega)$ be a $(PS)_c$ sequence,
  i.e. 
  \begin{equation*}
  \mathcal{I}_{a,b}(u_n) \to c, \quad
  \mathcal{I}_{a,b}'(u_n) \to 0
  \end{equation*}
  as $n \to +\infty$.  Recalling
  \eqref{Sobolev}, we observe that
\[
\mathcal{I}_{a,b}(u)=a \Vert u \Vert^2+b\Vert u \Vert^4-\int_{\Omega} |u|^{2^*_s}\, dx \geq a \Vert u \Vert^2+b\Vert u \Vert^4-S_{N,s}^{-\frac{2^*_s}{2}}\Vert u \Vert^{2^*_s}.
\]
Since $2^*_s<4$ we have that $\mathcal{I}_{a,b}$ is coercive, and from
that we can deduce the boundedness of the sequence $\{u_n\}_n$. From
\cite[Lemma 9]{MR3271254}, up to a subsequence, we have
\begin{equation*}
\begin{cases}
u_n \rightharpoonup u & \mbox{in} \, X_0^s(\Omega)\\
u_n \to u & \mbox{in $L^p(\Omega)$ for all $p \in \left[1, 2^*_s \right)$}  \\
u_n \to u & \mbox{a.e in} \, \R^N.
\end{cases}
\end{equation*}
Using the H\"older inequality, it is straightforward to see that the
sequence $\{u_n\}_n$ is also bounded in the space
$\mathcal{M}(\Omega)$,
thus there exists two finite
measures $\mu$ and $\nu$ such that
\[
  (-\Delta)^s u_n \rightharpoonup^* \mu \quad \mbox{and} \quad
  |u_n|^{2^*_s} \rightharpoonup^* \nu \quad \mbox{in} \,
  \mathcal{M}(\Omega)
\]
From \cite[Theorem 1.5]{MR3216834}, it follows that either
$u_n \to u $ in $L^{2^*_s}(\Omega)$ or there exist a set $J$ at most
countable, two real sequences $\{\mu_j\}_{j \in J}$, $\{\nu_j\}_{j \in J}$
and distinct points $\{x_j\}_{j\in J} \subset \R^N$ such that
\begin{equation} \label{eq10}
\nu=|u|^{2^*_s}+ \sum_{j \in J} \nu_j \delta_{x_j}
\end{equation}
and
\begin{equation} \label{eq11}
\mu= (-\Delta)^s u+\tilde{\mu}+\sum_{j \in J} \mu_j \delta_{x_j}
\end{equation}
for some positive finite measure $\tilde{\mu}$, where
\begin{equation} \label{eq12}
\nu_j \leq S_{N,s} \mu_j^{2^*_s}.
\end{equation}
\textit{Claim:} the set $J$ is empty.

If not, there exists an index $j_0$ such that $\nu_{j_0}\neq 0$ at
$x_{j_0}$. Fix $\varepsilon>0$ and consider a cut-off function
$\vartheta_{\varepsilon}$ such that
\begin{equation*}
\begin{cases}
0 \leq \vartheta_{\varepsilon} \leq 1 & \mbox{in} \, \Omega \\
\vartheta_{\varepsilon}=1 & \mbox{in} \, B(x_{j_0}, \varepsilon) \\
\vartheta_{\varepsilon}=0 & \mbox{in}\, \Omega \setminus B(x_{j_0}, 2\varepsilon).
\end{cases}
\end{equation*}
Since the sequence $\{u_n \vartheta_{\varepsilon}\}_n$ is still
bounded in $X_0^s(\Omega)$, we have that
\[
\lim_{n \to \infty} \mathcal{I}_{a,b}(u_n)\left[ u_n \vartheta_{\varepsilon}\right]=0,
\]
thus
\begin{align} \label{eq13}
\notag o(1)&=\mathcal{I}_{a,b}'(u_n)\left[u_n \vartheta_{\varepsilon}\right] =\left( a+b \Vert u_n \Vert^2 \right)\langle u_n,u_n \vartheta_{\varepsilon}\rangle_{X_0^s(\Omega)}-\int_{\Omega}|u_n|^{2^*_s}\vartheta_{\varepsilon}\, dx \\
\notag & =\left[ \left( a+b \Vert u_n \Vert^2 \right)  \int_{\mathcal{Q}} u_n(y)\frac{(u_n(x)-u_n(y))(\vartheta_{\varepsilon}(x)-\vartheta_{\varepsilon}(y))}{|x-y|^{N+2s}} \, dx\, dy \right.\\
& \left. +  \int_{\mathcal{Q}} \vartheta_{\varepsilon}(x) \frac{(u_n(x)-u_n(y))^2}{|x-y|^{N+2s}} \, dx\, dy \right] -\int_{\Omega}|u_n|^{2^*_s}\vartheta_{\varepsilon}\, dx .
\end{align}
as $n \to \infty$. By using the H\"older inequality, we estimate the
first term of \eqref{eq13}
\begin{align*}
 & \left( a+b \Vert u_n \Vert^2 \right)  \int_{\mathcal{Q}} u_n(y)\frac{(u_n(x)-u_n(y))(\vartheta_{\varepsilon}(x)-\vartheta_{\varepsilon}(y))}{|x-y|^{N+2s}} \, dx\, dy \\
&  \leq \int_{\mathcal{Q}} \frac{\left( u_n(x)-u_n(y)\right)^2}{|x-y|^{N+2s}}\, dx\, dy \int_{\mathcal{Q}} u_n^2(y)\frac{\left( \vartheta_{\varepsilon}(x)-\vartheta_{\varepsilon}(y)\right)^2}{|x-y|^{N+2s}}\, dx\, dy \\
& C\int_{\mathcal{Q}} u_n^2(y)\frac{\left( \vartheta_{\varepsilon}(x)-\vartheta_{\varepsilon}(y)\right)^2}{|x-y|^{N+2s}}\, dx\, dy
\end{align*}
for some $C>0$. As in~\cite[Lemma 2.1]{MR4018100}, we have that
\begin{equation} \label{eq14}
\lim_{\varepsilon \to 0} \limsup_{n \to \infty} \int_{\mathcal{Q}} |u_n(y)|^2 \frac{\left| \vartheta_{\varepsilon}(x)-\vartheta_{\varepsilon}(y)\right|^2}{|x-y|^{N+2s}}\, dx\, dy=0.
\end{equation}
Regarding the second term of \eqref{eq13}, recalling \eqref{eq11}, we
get
\begin{multline*}
 \lim_{n \to \infty} \left( a+b \Vert u_n \Vert^2 \right)  \int_{\mathcal{Q}} \vartheta_{\varepsilon}(x) \frac{(u_n(x)-u_n(y))^2}{|x-y|^{N+2s}} \, dx\, dy \\
\geq \lim_{n \to \infty}\left[  a \int_{\R^{2N}\setminus B(x_{j_0},2\varepsilon)^c\times \Omega^c} \vartheta_{\varepsilon}(x) \frac{(u_n(x)-u_n(y))^2}{|x-y|^{N+2s}} \, dx\, dy \right. \\
 \left. \quad + b \left( \int_{\mathcal{Q}} \vartheta_{\varepsilon}(x) \frac{(u_n(x)-u_n(y))^2}{|x-y|^{N+2s}} \, dx\, dy \right)^2 \right] \\
  \geq a \int_{\R^{2N}\setminus B(x_{j_0},2\varepsilon)^c\times \Omega^c} \vartheta_{\varepsilon}(x) \frac{(u(x)-u(y))^2}{|x-y|^{N+2s}} \, dx\, dy +a \mu_{j_{0}} \\
 +  b \left( \int_{\mathcal{Q}} \vartheta_{\varepsilon}(x) \frac{(u(x)-u(y))^2}{|x-y|^{N+2s}} \, dx\, dy \right)^2+b \mu_{j_{0}}^2.
\end{multline*}
Hence
\begin{equation} \label{eq15}
\lim_{\varepsilon \to 0} \lim_{n \to \infty} \left( a+b \Vert u_n \Vert^2 \right)  \int_{\mathcal{Q}} \vartheta_{\varepsilon}(x) \frac{(u_n(x)-u_n(y))^2}{|x-y|^{N+2s}} \, dx\, dy \geq a \mu_{j_0}+b\mu_{j_0}^2.
\end{equation}
Finally, exploiting \eqref{eq10} we have
\begin{equation} \label{eq16}
\lim_{\varepsilon \to 0} \lim_{n \to \infty} \int_{\Omega}|u_n|^{2^*_s} \vartheta_{\varepsilon}\, dx= \lim_{\varepsilon \to 0}\int_{\Omega}|u|^{2^*_s} \vartheta_{\varepsilon}\, dx+ \nu_{j_0}=\nu_{j_0}.
\end{equation}
Putting together \eqref{eq14}, \eqref{eq15} and \eqref{eq16}, and
using \eqref{eq12}, we obtain
\begin{gather*}
0\geq a \mu_{j_0}+b \mu_{j_0}^2-\nu_{j_0} \geq  a \mu_{j_0}+b \mu_{j_0}^2 - S_{N,s}^{-\frac{2^*_s}{2}}\mu_{j_0}^{\frac{2^*_s}{2}}
 =\mu_{j_0}\left(a+b \mu_{j_0}-S_{N,s}^{-\frac{2^*_s}{2}}\mu_{j_0}^{\frac{2^*_s}{2}-1} \right).
\end{gather*}
We define
\[
\tilde{f}_{N,s}(\zeta)= a+b\zeta-S_{N,s}^{-\frac{2^*_s}{2}}\zeta^{\frac{2^*_s}{2}-1} \quad \mbox{for} \, \zeta \geq 0.
\]
At this point, noting that the condition
$a^{(N-4s)/2s}b>\mathsf{PS}_{N,s}$ implies $\tilde{f}_{N,s}(x)>0$, we
deduce
\[
a+b\mu_{j_0}-S_{N,s}^{-\frac{2^*_s}{2}}\mu_{j_0}^{\frac{2^*_s}{2}-1}>0.
\]
Hence $\mu_{j_0}=0$, and recalling \eqref{eq12} $\nu_{j_0}=0$ as well.

So the set $J=\emptyset$, and using the Brezis-Lieb lemma (see
\cite[Theorem 1]{MR699419}) we can rewrite \eqref{eq10} as
\[
\lim_{n \to \infty} \int_{\Omega} |u_n|^{2^*_s}\, dx= \int_{\Omega}|u|^{2^*_s}\, dx.
\]
Hence $u_n \to u$ in $L^{2^*_s}(\Omega)$ and
\begin{equation} \label{eq17}
\lim_{n \to \infty }\int_{\Omega}|u_n|^{2^*_s-2}u_n(u-u_n) \, dx=0.
\end{equation}
Coupling $\eqref{eq17}$ and the fact that
$\mathcal{I}_{a,b}'(u_n) \to 0$ as $ n \to \infty$ we get
\begin{align*}
0&=\lim_{n \to \infty} \mathcal{I}_{a,b}'(u_n)\left[u_n-u \right]=\lim_{n \to \infty} \left[\left(a+b\Vert u_n \Vert^2\right) \langle u_n, u_n-u\rangle_{X_0^s(\Omega)} \right.\\
&\left. -\int_{\Omega}|u_n|^{2^*_s-2}u_n(u_n-u)\, dx \right]\\
&=\lim_{n \to \infty} \left(a+b\Vert u_n \Vert^2\right) \langle u_n, u_n-u\rangle_{X_0^s(\Omega)}.
\end{align*}
From the last chain of equalities, recalling that
$\{u_n\}_n \subset X_0^s(\Omega)$ is bounded, we obtain
\begin{equation} \label{eq18}
\lim_{n \to \infty} \langle u_n, u_n-u\rangle_{X_0^s(\Omega)}=0.
\end{equation}
To conclude the proof it suffices to notice that thanks to
\eqref{eq18} and $u_n \rightharpoonup u$ we have
\[
\Vert u_n -u\Vert^2=\langle u_n, u_n-u\rangle_{X_0^s(\Omega)}-\langle u, u_n-u\rangle_{X_0^s(\Omega)} \to 0
\]
as $n \to \infty$.
\end{proof}

\begin{proof}[Proof of Theorem \ref{th1} $(iii)$]
  In order to establish the convexity we will show that
\[
\mathcal{I}''_{a,b}(u)\left[v,v\right] \geq 0 \quad \mbox{for all $u$, $v \in X_0^s(\Omega)$}.
\]
Differentiating \eqref{eq19} we notice that
\begin{equation} \label{eq20}
\mathcal{I}''_{a,b}(u)\left[v,v\right]=a\Vert v \Vert^2+b\Vert u \Vert^2 \Vert v \Vert^2-(2^*_s-1)\int_{\Omega} |u|^{2^*_s-2}v^2\, dx.
\end{equation}
Using the H\"older and the Sobolev inequalities we get
\begin{equation} \label{eq21}
\int_{\Omega}|u|^{2^*_s-2}v^2\, dx \leq \Vert u \Vert_{2^*_s}^{2^*_s-2}\Vert v\Vert_{2^*_s}^2 \leq S_{N,s}^{-\frac{2^*_s}{2}} \Vert u \Vert^{2^*_s-2}\Vert v \Vert^2.
\end{equation}
Putting together \eqref{eq20} and \eqref{eq21} we obtain
\[
\mathcal{I}''_{a,b}(u)\left[v,v\right] \geq \Vert v \Vert^2 \left[ a+b\Vert u \Vert^2-(2^*_s-1)S_{N,s}^{-\frac{2^*_s}{2}}\Vert u \Vert^{2^*_s-2} \right].
\]
At this point we set
\[
\hat{f}_{N,s}(\zeta)=a+b\zeta^2-(2^*_s-1)S_{N,s}^{\frac{2^*_s}{2}}\zeta^{2^*_s-2} \quad \mbox{for all} \, \zeta \geq 0,
\]
and we want to prove that it is positive on $\left[0,\infty
\right)$. Indeed, with a simple computation it is possible to show
that $\hat{f}_{N,s}$ attains its global minimum at
\[
\hat{m}_{N,s}=\left( \frac{2b S_{N,s}^{\frac{2^*_s}{2}}}{(2^*_s-1)(2^*_s-2)} \right)^{\frac{1}{2^*_s-4}}
\]
and that
\[
\hat{f}_{N,s}(\zeta)\geq 0 \Leftrightarrow a^{\frac{N-4s}{2s}}b\geq C_{N,s}
\]
for all $\zeta\geq 0$.
\end{proof}
\begin{remark} \label{rem1} It is clear from the proof that the
  functional $\mathcal{I}_{a,b}$ is strictly convex provided that
  $a^{(N-4s)/2s}b> C_{N,s}$.
\end{remark}

\section{Application to a perturbed Kirchhoff problem} \label{sec:3}

This section is devoted to study an application of Theorem
$\ref{th1}$. More precisely we want to study the set of solutions of
the perturbed problem
\begin{equation} \label{eq:Pablambda}
\tag{$P_{a,b}^{\lambda}$}
\begin{cases}  \displaystyle\left( a+b \int_{\mathcal{Q}} \frac{|u(x)-u(y)|^2}{|x-y|^{n+2s}}\, dx\, dy \right)(-\Delta)^s u = |u|^{2^*_s-2}u+\lambda g(x,u) &\hbox{in}\ \Omega \\
  u=0 & \mbox{in} \ \R^N \setminus \Omega
\end{cases}
\end{equation}
where as before $a$, $b$ are real positive parameter, $\Omega$ is a
bounded domain and $\lambda >0$. As for $g$, we generalize to
the fractional case the assumptions present in
\cite{MR4201645}. Namely, we make the following assumptions:
\begin{itemize}
\item[($H_1$)] $g\colon \Omega \times \R \to \R$ is a Carath\'eodory
  function such that $g(x,0)=0$ a.e. in $\Omega$;
\item[($H_2$)] $g(x,t)>0$ for every $t>0$ and $g(x,t)<0$ for every
  $t<0$ a.e. in $\Omega$. In addition, we require that there is a
  $\mu >0$ such that $g(x,t)\geq \mu >0$ a.e in $\Omega$ and for every
  $t \in I$, where $I$ is some open interval of $(0,\infty)$;
\item[($H_3$)] there is a constant $c>0$ and $p \in (2,2^*_s)$ such
  that $g(x,t) \leq c(1+|t|^{p-1})$ a.e. in $\Omega$;
\item[($H_4$)] $\lim_{t \to 0} g(x,t)/|t|=0$ uniformly with respect to
  $x \in \Omega$.
\end{itemize}

Using a variational approach, we investigate the existence of critical
points of the functional defined on the space $X_0^s(\Omega)$
\[
  \mathcal{I}_{a,b}^{\lambda}(u):=\frac{a}{2}\Vert u \Vert^2+\frac{b}{4}\Vert u
  \Vert^{4}-\frac{1}{2^*_{s}}\Vert u \Vert^{2^*_s}_{2^*_s}-\lambda \int_{\Omega} G(x,u) \, dx
\]
where we denote with $G(x,t)=\displaystyle\int_0^t g(x,\tau) d\tau$.

We begin the treatment of our problem by proving a series of technical
results that will be useful throughout this section.
\begin{remark} \label{rem2}
Before starting, let us recall the functions
\[
f_{N,s}(\zeta):=\frac{a}{2}+\frac{b}{4}\zeta^2-\frac{S_{N,s}^{-\frac{2^*_s}{2}}}{2^*_s}\zeta^{2^*_s-2}
\]
and
\[
\tilde{f}_{N,s}(\zeta)= a+b\zeta-S_{N,s}^{-\frac{2^*_s}{2}}\zeta^{\frac{2^*_s}{2}-1}
\]
defined in the proofs of Theorems \ref{th1} $(i)$ and \ref{th1}
$(ii)$. As we have already seen these functions have a unique local
minimizer attained respectively at
\begin{gather*}
m_{N,s}=\left[\frac{b}{2} \frac{2^*_s }{2^*_s-2}S_{N,s}^{\frac{2^*_s}{2}}\right]^{\frac{1}{2^*_s-4}},
\end{gather*}
and
\begin{gather*}
\tilde{m}_{N,s}=\left[ \frac{2b}{2^*_s-2}S_{N,s}^{\frac{2^*_s}{2}}\right]^{\frac{1}{2^*_s-4}}.
\end{gather*}
Furthermore, $f_{N,s}(m_{N,s})>0$ if and only if
$a^{\frac{N-4s}{2s}}b >L_{N,s}$ and $f_{N,s}(m_{N,s})=0$ when
$a^{\frac{N-4s}{2s}}b =L_{N,s}$. Analogously
$\tilde{f}_{N,s}(\tilde{m}_{N,s})>0$ if and only if
$a^{(N-4s)/2s}b > \mathsf{PS}_{N,s}$ and
$\tilde{f}_{N,s}(\tilde{m}_{N,s})=0$ when
$a^{(N-4s)/2s}b = \mathsf{PS}_{N,s}$.
\end{remark}
\begin{proposition} \label{prop1} Let
  $u \in X_0^s(\Omega) \setminus \lbrace0\rbrace$. We have that:
\begin{description}
\item[$(i)$] for every $\zeta>0$ it holds
\[
\frac{a}{2}\Vert u \Vert^2+\frac{b}{4}\zeta^2\Vert u \Vert^4-\frac{1}{2^*_s}\zeta^{2^*_s-2}>f_{N,s}(\zeta \Vert u \Vert)\Vert u \Vert^2;
\]
\item[$(ii)$] for every $\zeta >0$ it holds
\[
a \Vert u \Vert^2 +b \zeta^2 \Vert u \Vert^4-\Vert u \Vert_{2^*_s}^{2^*_s}\zeta^{2^*_s-2}>\tilde{f}_{N,s}(\zeta \Vert u \Vert) \Vert u \Vert^2.
\]
\end{description}
\end{proposition}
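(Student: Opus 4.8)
The plan is to read both left-hand sides as $\mathcal{I}_{a,b}(\zeta u)/\zeta^2$ and $(\mathcal{I}_{a,b})'(\zeta u)[\zeta u]/\zeta^2$, and then to reduce each inequality to the \emph{strict} fractional Sobolev inequality on the bounded domain $\Omega$. Since no compactness is involved, the whole argument is essentially one algebraic cancellation followed by the observation that the best constant $S_{N,s}$ is never attained.

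First I would expand the right-hand sides using the definitions recalled in Remark \ref{rem2}. A direct substitution gives
\[
f_{N,s}(\zeta\Vert u\Vert)\,\Vert u\Vert^2
=\frac{a}{2}\Vert u\Vert^2+\frac{b}{4}\zeta^2\Vert u\Vert^4
-\frac{S_{N,s}^{-2^*_s/2}}{2^*_s}\,\zeta^{2^*_s-2}\Vert u\Vert^{2^*_s},
\]
and the analogous expansion of $\tilde f_{N,s}$, evaluated at the matching squared norm, yields
\[
\tilde f_{N,s}(\,\cdot\,)\,\Vert u\Vert^2
=a\Vert u\Vert^2+b\zeta^2\Vert u\Vert^4
-S_{N,s}^{-2^*_s/2}\,\zeta^{2^*_s-2}\Vert u\Vert^{2^*_s}.
\]
In each case the $a$- and $b$-terms coincide verbatim with those on the left, so after subtraction both inequalities collapse to a single comparison between the genuine critical norm $\Vert u\Vert_{2^*_s}^{2^*_s}$ and the Sobolev surrogate $S_{N,s}^{-2^*_s/2}\Vert u\Vert^{2^*_s}$.

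The decisive step is then the strict bound
\[
\Vert u\Vert_{2^*_s}^{2^*_s}<S_{N,s}^{-2^*_s/2}\Vert u\Vert^{2^*_s}
\qquad\text{for every }u\in X_0^s(\Omega)\setminus\{0\}.
\]
From \eqref{Sobolev} one always has $\Vert u\Vert^2\geq S_{N,s}\Vert u\Vert_{2^*_s}^2$, and raising to the power $2^*_s/2$ gives the non-strict version. Equality would force $u$ to realize the infimum in \eqref{Sobolev}, which is impossible on a bounded domain $\Omega\neq\R^N$ --- exactly the fact already used at the end of the proof of Theorem \ref{th1}$(i)$ via \cite[Theorem 1.1]{MR2064421}. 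Multiplying the resulting strict inequality by the positive factors $\zeta^{2^*_s-2}/2^*_s$ for $(i)$ and $\zeta^{2^*_s-2}$ for $(ii)$ then closes both assertions.

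I expect the only delicate point to be purely presentational bookkeeping: aligning the argument fed into $f_{N,s}$ and $\tilde f_{N,s}$ with the powers of $\zeta$ and $\Vert u\Vert$ on the left, so that every $a$- and $b$-term cancels and only the critical term survives. Once that alignment is fixed, the conclusion is immediate from the strictness of the Sobolev inequality, with no concentration-compactness argument required.
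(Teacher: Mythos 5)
Your argument is correct and is essentially the paper's own proof: the paper likewise rewrites the left-hand side in terms of $\zeta\Vert u\Vert$ so that the $a$- and $b$-terms match those of $f_{N,s}(\zeta\Vert u\Vert)\Vert u\Vert^2$ (respectively $\tilde f_{N,s}$ at the squared argument), and reduces everything to the strict Sobolev inequality $\Vert u\Vert_{2^*_s}^{2^*_s}<S_{N,s}^{-2^*_s/2}\Vert u\Vert^{2^*_s}$, with strictness coming from the non-attainment of $S_{N,s}$ on a bounded domain as in \cite{MR2064421}. Your remark about aligning the argument of $\tilde f_{N,s}$ with the powers of $\zeta$ and $\Vert u\Vert$ correctly identifies the only (typographical) subtlety in the statement.
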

\begin{proof}
  From the boundedness of \(\Omega\) it follows as in~\cite{MR2064421}
  that
\begin{align} \label{eq22}
 \notag \zeta^2 \left[ \frac{a}{2} \Vert u \Vert^2+ \frac{b}{4}\zeta^2\Vert u \Vert^4-\frac{1}{2^*_s}\zeta^{2^*_s-2}\Vert u \Vert^{2^*_s}_{2^*_s} \right] &= \frac{a}{2} \left( \zeta \Vert u \Vert\right)^2 + \frac{b}{4}\left( \zeta \Vert u \Vert\right)^4-\frac{\Vert u \Vert^{2^*_s}_{2^*_s}}{\Vert u \Vert^{2^*_s}} \frac{\left( \zeta \Vert u \Vert\right)^{2^*_s}}{2^*_s} \\
&>\frac{a}{2} \left( \zeta \Vert u \Vert\right)^2 + \frac{b}{4}\left( \zeta \Vert u \Vert\right)^4-S_{N,s}^{-\frac{2^*_s}{2}} \frac{\left( \zeta \Vert u \Vert\right)^{2^*_s}}{2^*_s}
\end{align}
where in the last expression we used the Sobolev inequality. Dividing
by $\zeta^2$ we get the first statement. $(ii)$ follows similarly.
\end{proof}
As we did in the previous section, we show in the following lemma that
the functional $\mathcal{I}_{a,b}^{\lambda}$ is sequentially lower
semicontinuous and satisfies the Palais-Smale condition for $a$ and
$b$ sufficiently large.
\begin{lemma} \label{lemma1} Let $a, b \in \R^+$,
  $(u_k)_k \subset X_0^s(\Omega)$ and $\lambda_k \to \lambda \geq 0$
  as $k \to \infty$:
\begin{description}
\item[$(1)$] if $a^{(N-4s)/2s}b\geq L_{N,s}$  and $u_k \rightharpoonup u$ in $X_0^s (\Omega)$ then $$\mathcal{I}_{a,b}^{\lambda} (u) \leq \liminf_{k \to \infty}\mathcal{I}_{a,b}^{\lambda}(u_k);$$
\item[$(2)$] if $a^{(N-4s)/2s}b > \mathsf{PS}_{N,s}$, $\mathcal{I}_{a,b}^{\lambda}(u_k) \to c$ and $\left(\mathcal{I}^{\lambda}_{a,b}\right)'(u_k) \to 0$ then $(u_k)_k$ is convergent to some $u$ in $X_0^s(\Omega)$ up to subsequence.
\end{description}
\end{lemma}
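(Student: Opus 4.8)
The plan is to regard $\mathcal{I}_{a,b}^{\lambda}$ as the unperturbed functional $\mathcal{I}_{a,b}$ of Theorem \ref{th1}, corrected by the lower-order term $\lambda\int_\Omega G(x,u)\,dx$, and to show that this correction behaves like a compact perturbation. The decisive structural fact is that $g$ is subcritical: by $(H_3)$ together with the compact embedding $X_0^s(\Omega)\hookrightarrow L^p(\Omega)$ for every $p\in[1,2^*_s)$, weak convergence $u_k\weakto u$ upgrades to strong convergence in $L^p(\Omega)$ and a.e.\ in $\R^N$. The continuity of the Nemytskii operator associated with $G$ then gives $\int_\Omega G(x,u_k)\,dx\to\int_\Omega G(x,u)\,dx$, and because $\lambda_k\to\lambda$ with these integrals bounded, the whole perturbation converges to $\lambda\int_\Omega G(x,u)\,dx$.

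For part $(1)$ I would write
\[
\mathcal{I}_{a,b}^{\lambda}(u_k)=\mathcal{I}_{a,b}(u_k)-\lambda\int_\Omega G(x,u_k)\,dx .
\]
Under $a^{(N-4s)/2s}b\geq L_{N,s}$, Theorem \ref{th1}$(i)$ furnishes $\mathcal{I}_{a,b}(u)\leq\liminf_k\mathcal{I}_{a,b}(u_k)$, while the perturbation converges by the previous paragraph. Since $\liminf_k(A_k+B_k)\geq\liminf_k A_k+\lim_k B_k$ whenever $B_k$ converges, the two facts combine to give $\mathcal{I}_{a,b}^{\lambda}(u)\leq\liminf_k\mathcal{I}_{a,b}^{\lambda}(u_k)$.

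For part $(2)$ I would first establish boundedness of the Palais--Smale sequence. Because $N>4s$ forces $2^*_s<4$ and $(H_3)$ gives $G(x,t)\leq C(1+|t|^p)$ with $p\in(2,2^*_s)$, the quartic term $\tfrac{b}{4}\|u\|^4$ dominates both the critical and the subcritical contributions; hence $\mathcal{I}_{a,b}^{\lambda}$ is coercive and $(u_k)_k$ is bounded. Passing to a subsequence with $u_k\weakto u$, I would run the concentration--compactness argument exactly as in the proof of Theorem \ref{th1}$(ii)$, testing $(\mathcal{I}_{a,b}^{\lambda})'(u_k)$ against $u_k\vartheta_\varepsilon$ for a cut-off $\vartheta_\varepsilon$ concentrated at a putative atom $x_{j_0}$. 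The only new contribution is $\lambda\int_\Omega g(x,u_k)u_k\vartheta_\varepsilon\,dx$; by $(H_3)$ it is bounded by $C\int_{B(x_{j_0},2\varepsilon)}(|u_k|+|u_k|^p)\,dx$, which converges by strong $L^p$ convergence and then vanishes as $\varepsilon\to0$ by absolute continuity of the integral. Thus the subcritical term charges no atom, the relation \eqref{eq12} between $\nu_j$ and $\mu_j$ survives, and the sign analysis of $\tilde{f}_{N,s}$ under $a^{(N-4s)/2s}b>\mathsf{PS}_{N,s}$ forces $\mu_{j_0}=\nu_{j_0}=0$. Hence $J=\emptyset$ and, by the Brezis--Lieb lemma, $u_k\to u$ in $L^{2^*_s}(\Omega)$.

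To upgrade to strong convergence in $X_0^s(\Omega)$ I would evaluate $(\mathcal{I}_{a,b}^{\lambda})'(u_k)[u_k-u]\to0$: the critical term vanishes by the $L^{2^*_s}$ convergence as in the unperturbed case, and the perturbation $\lambda\int_\Omega g(x,u_k)(u_k-u)\,dx$ vanishes since, by $(H_3)$, $g(\cdot,u_k)$ stays bounded in $L^{p/(p-1)}(\Omega)$ while $u_k\to u$ in $L^p(\Omega)$. This leaves $(a+b\|u_k\|^2)\langle u_k,u_k-u\rangle_{X_0^s(\Omega)}\to0$; since $a>0$ and $(u_k)_k$ is bounded, $\langle u_k,u_k-u\rangle_{X_0^s(\Omega)}\to0$, and weak convergence then yields $\|u_k-u\|^2\to0$. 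The hard part will be the bookkeeping inside the concentration--compactness step, namely verifying rigorously that the subcritical perturbation contributes nothing to the atomic parts of $\mu$ and $\nu$; once this is granted, everything else is either a direct citation of Theorem \ref{th1} or the routine compactness of subcritical Nemytskii operators.
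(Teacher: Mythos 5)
Your proposal is correct and follows exactly the route the paper intends: the published proof of this lemma is a one-line remark that it ``follows closely the arguments of Theorem \ref{th1} $(i)$ and $(ii)$ with minor changes,'' and what you have written is precisely those minor changes spelled out, namely that the subcritical perturbation $\lambda_k\int_\Omega G(x,u_k)\,dx$ converges along weakly convergent sequences by the compact embedding and $(H_3)$--$(H_4)$, and that it charges no atoms in the concentration--compactness step. Nothing further is needed.
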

\begin{proof}
  The proof follows closely the arguments of Theorem \ref{th1} $(i)$
  and $(ii)$ with minor changes.
\end{proof}
Now choose $\lambda \geq 0$ and $u \in X_0^s(\Omega)$. For every
$\zeta>0$ we introduce the fiber map
\[
\mathcal{J}_{a,b}^{\lambda,u}(\zeta):=\mathcal{I}_{a,b}^{\lambda}(\zeta u)= \frac{a}{2} \zeta^2 \Vert u \Vert^2+ \frac{b}{4}\zeta^4\Vert u \Vert^4-\frac{\zeta^{2^*_s}}{2^*_s}\Vert u \Vert^{2^*_s}_{2^*_s}-\int_{\Omega}G(x,\zeta u)\, dx.
\]
\begin{proposition} \label{prop2} Let $\lambda \in \R$ be nonnegative
  and $u \in X_0^s(\Omega \setminus \lbrace0\rbrace$. Then there
  exists a neighbourhood $V_{\lambda}$ of $0$ such that
  $\mathcal{J}_{a,b}^{\lambda,u}(\zeta)>0$ for every
  $\zeta \in V_{\lambda} \cap (0, \infty)$. We also have that
  $\mathcal{J}_{a,b}^{\lambda,u}(\zeta)\to \infty$ as
  $ \zeta \to \infty$.
\end{proposition}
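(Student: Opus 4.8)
The plan is to treat the two assertions separately, in each case reducing to an elementary comparison of powers of $\zeta$ once the perturbation $\int_\Omega G(x,\zeta u)\,dx$ has been controlled through the growth hypotheses on $g$. The common starting point is Proposition~\ref{prop1}~$(i)$: multiplying that inequality by $\zeta^2$ and recognizing the left-hand side as the unperturbed part of the fiber map gives
\[
\mathcal{J}_{a,b}^{\lambda,u}(\zeta) > \zeta^2 \Vert u \Vert^2\, f_{N,s}(\zeta \Vert u \Vert) - \lambda \int_\Omega G(x,\zeta u)\, dx
\]
for every $\zeta>0$, so that everything reduces to comparing $\zeta^2 \Vert u\Vert^2 f_{N,s}(\zeta\Vert u\Vert)$ with the perturbation as $\zeta\to 0^+$ and as $\zeta\to+\infty$. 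I will also use the standard consequence of $(H_3)$ and $(H_4)$ that for every $\varepsilon>0$ there is $C_\varepsilon>0$ with $|G(x,t)|\le \tfrac{\varepsilon}{2}|t|^2+C_\varepsilon|t|^{p}$ for a.e.\ $x\in\Omega$ and all $t\in\R$; integrating and invoking the continuous embeddings $X_0^s(\Omega)\hookrightarrow L^2(\Omega)$ and $X_0^s(\Omega)\hookrightarrow L^p(\Omega)$ yields
\[
\lambda\int_\Omega G(x,\zeta u)\,dx \le \tfrac{\lambda\varepsilon}{2}\zeta^2\Vert u\Vert_2^2 + \lambda C_\varepsilon \zeta^{p}\Vert u\Vert_p^p.
\]

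For the behaviour near the origin, since $f_{N,s}(0)=a/2>0$ and $f_{N,s}$ is continuous, there is $\zeta_1>0$ with $f_{N,s}(\zeta\Vert u\Vert)\ge a/4$ for $0<\zeta<\zeta_1$, whence $\zeta^2\Vert u\Vert^2 f_{N,s}(\zeta\Vert u\Vert)\ge \tfrac{a}{4}\zeta^2\Vert u\Vert^2$ there. Choosing $\varepsilon$ small enough---depending on $\lambda$, which is exactly why the neighbourhood must be labelled $V_\lambda$---so that $\tfrac{\lambda\varepsilon}{2}\Vert u\Vert_2^2<\tfrac{a}{8}\Vert u\Vert^2$, the two displays combine to
\[
\mathcal{J}_{a,b}^{\lambda,u}(\zeta) > \zeta^2\Big(\tfrac{a}{8}\Vert u\Vert^2 - \lambda C_\varepsilon \Vert u\Vert_p^p\,\zeta^{p-2}\Big).
\]
Because $p>2$, the bracket tends to $\tfrac{a}{8}\Vert u\Vert^2>0$ as $\zeta\to 0^+$, so it is positive on a (possibly smaller) interval $(0,\zeta_0)$; taking $V_\lambda=(-\zeta_0,\zeta_0)$ settles the first claim.

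For the limit at infinity the decisive structural fact is that $N>4s$ forces $2^*_s<4$, together with $p<2^*_s<4$. Hence $f_{N,s}(x)\sim \tfrac{b}{4}x^2$ as $x\to+\infty$ (the $x^{2^*_s-2}$ term is of strictly lower order since $2^*_s-2<2$), so $\zeta^2\Vert u\Vert^2 f_{N,s}(\zeta\Vert u\Vert)$ grows like $\tfrac{b}{4}\zeta^4\Vert u\Vert^4$, a genuine quartic with positive leading coefficient as $b>0$ and $u\ne 0$. The perturbation, bounded above by $\tfrac{\lambda}{2}\zeta^2\Vert u\Vert_2^2+\lambda C\zeta^{p}\Vert u\Vert_p^p$ (fixing $\varepsilon=1$), has order at most $\max\{2,p\}<4$ in $\zeta$. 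Subtracting a term of order $<4$ from one of order $4$ forces $\mathcal{J}_{a,b}^{\lambda,u}(\zeta)\to+\infty$, which is the second claim. The argument is in essence a bookkeeping of exponents and presents no genuinely hard step; the two points needing care are the $\lambda$-dependent choice of $\varepsilon$ near the origin, forced by the fact that the quadratic part of $G$ competes at the very same order $\zeta^2$ as the leading positive term of $\mathcal{J}_{a,b}^{\lambda,u}$, and the observation that the hypothesis $N>4s$ is precisely what makes the Kirchhoff quartic dominate both the critical term and the subcritical perturbation as $\zeta\to+\infty$. Note that neither the balance condition $a^{(N-4s)/2s}b\ge L_{N,s}$ nor any restriction on $\lambda$ beyond $\lambda\ge 0$ is used, in keeping with the full generality of the statement.
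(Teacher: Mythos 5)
Your proof is correct and follows essentially the same route as the paper: factor out $\zeta^2$, control the nonlocal and critical terms via the Sobolev inequality (your invocation of Proposition~\ref{prop1}~$(i)$ is exactly that step), bound $G$ through $(H_3)$--$(H_4)$, and then compare exponents using $2<p<2^*_s<4$. If anything, you are slightly more careful than the paper's terse argument in retaining the $C_\varepsilon|t|^p$ part of the bound on $G$ near the origin and disposing of it via $p>2$, rather than absorbing it silently.
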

\begin{remark} \label{rem3} The previous proposition shows indirectly
  that the map $\mathcal{J}_{a,b}^{\lambda,u}(\zeta)$ is bounded from
  below
\end{remark}
\begin{proof}
  Fix $\varepsilon>0$. Exploiting $(H_4)$, for $\zeta$ small enough
  we get
\begin{align*}
\mathcal{J}_{a,b}^{\lambda,u}(\zeta)& = \zeta^2 \left(\frac{a}{2} \Vert u \Vert^2+ \frac{b}{4} \zeta^2\Vert u \Vert^4-\frac{\zeta^{2^*_s-2}}{2^*_s}\Vert u \Vert_{2^*_s}^{2^*_s}-\lambda \int_{\Omega} \frac{G(x,\zeta u)}{\zeta^2} \, dx \right)\\
&\geq \zeta^2 \left(\frac{a}{2} \Vert u \Vert^2+ \frac{b}{4}\zeta^2\Vert u \Vert^4-\frac{\zeta^{2^*_s-2}}{2^*_s}\Vert u \Vert_{2^*_s}^{2^*_s}-\lambda \frac{\varepsilon}{2} \Vert u \Vert_2^2 \right).
\end{align*}
Using the Sobolev inequality, taking $\varepsilon$ appropriately and
choosing $\zeta$ even smaller if necessary we obtain the first part of
the statement. In order to complete the proof, it is sufficient to
remember that $G$ has subcritical growth and to notice that
$2<p<2^*_s<4$.
\end{proof}
Now we choose $u \in X_0^s(\Omega)$ and we consider the system
\begin{equation} \label{eq23}
\begin{cases}
\mathcal{J}_{a,b}^{\lambda,u}(\zeta)=0 \\
(\mathcal{J}_{a,b}^{\lambda,u})'(\zeta)=0 \\
\mathcal{J}_{a,b}^{\lambda,u}(\zeta)=\inf_{\varrho>0} \mathcal{J}_{a,b}^{\lambda,u}(\varrho)
\end{cases}
\end{equation}
in the unknowns $\lambda$ and $\zeta$.
\begin{proposition} \label{prop:3.6}
Let $T$ and $Z$ be two topological space, and assume that $Z$ is compact. Let
  $h\colon T \times Z \to \R$ be a continuous function. Then the function $\hat{h}(t):=\inf_{z \in Z} h(t, z)$ is continuous on $T$.
\end{proposition}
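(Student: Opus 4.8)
The plan is to deduce continuity from the two one-sided conditions: a real-valued function on a topological space is continuous exactly when every sublevel set $\{t : \hat h(t) < c\}$ and every superlevel set $\{t : \hat h(t) > c\}$ is open, so I would prove that $\hat h$ is simultaneously upper and lower semicontinuous. As a preliminary remark I would note that, since $Z$ is compact and $h(t,\cdot)$ is continuous for each fixed $t \in T$, the infimum defining $\hat h(t)$ is actually attained, so $\hat h(t) = \min_{z \in Z} h(t,z)$ is finite; this is not logically indispensable but clarifies the geometry.

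For the \emph{upper semicontinuity} I would fix $c \in \R$ and a point $t_0$ with $\hat h(t_0) < c$. By definition of the infimum there is some $z_0 \in Z$ with $h(t_0, z_0) < c$, and continuity of $h$ at $(t_0, z_0)$ provides a product neighbourhood $U \times W$ on which $h < c$. In particular $h(t, z_0) < c$ for every $t \in U$, so $\hat h(t) \le h(t, z_0) < c$ there, showing $\{t : \hat h(t) < c\}$ is open. It is worth stressing that this half of the argument never uses compactness.

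For the \emph{lower semicontinuity}, which is the heart of the matter, I would fix $c \in \R$ and a point $t_0$ with $\hat h(t_0) > c$, and then choose an intermediate value $c'$ with $c < c' < \hat h(t_0)$, so that $h(t_0, z) > c'$ for every $z \in Z$. For each $z$, continuity of $h$ at $(t_0, z)$ yields a product neighbourhood $U_z \times W_z$ on which $h > c'$. The sets $\{W_z\}_{z \in Z}$ form an open cover of $Z$, and here compactness enters: I would extract a finite subcover $W_{z_1}, \ldots, W_{z_n}$ and set $U := \bigcap_{i=1}^n U_{z_i}$, a neighbourhood of $t_0$. Then for any $t \in U$ and any $z \in Z$, choosing $i$ with $z \in W_{z_i}$ gives $(t,z) \in U_{z_i} \times W_{z_i}$ and hence $h(t,z) > c'$; taking the infimum over $z$ yields $\hat h(t) \ge c' > c$, so $\{t : \hat h(t) > c\}$ is open.

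Combining the two halves, all sets of the form $\{\hat h < c\}$ and $\{\hat h > c\}$ are open, and since these generate the topology of $\R$ the function $\hat h$ is continuous on $T$. The main obstacle I anticipate is precisely the lower-semicontinuity step: one must upgrade the pointwise inequality $h(t_0, \cdot) > c'$ into a statement uniform in $z$ on an entire neighbourhood of $t_0$, and it is exactly the compactness of $Z$ that licenses the finite-subcover reduction making this possible.
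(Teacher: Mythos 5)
Your proposal is correct and follows essentially the same route as the paper: both reduce continuity to openness of the preimages of the subbasic sets $(-\infty,c)$ and $(c,\infty)$, handle the sublevel sets by an argument that needs no compactness (the paper phrases it via the openness of the projection $\pi_T$, you via an explicit point $z_0$), and handle the superlevel sets by a tube-lemma finite-subcover argument. Your insertion of the intermediate value $c'$ is a clean way to close the strict-inequality issue that the paper instead resolves by noting the infimum is attained.
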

\begin{proof}
We first observe that for any $t \in T$ the
  function $\hat{h}$ is well defined since $Z$ is compact and the
  infimum is always attained at some point $z(t) \in Z$. Recalling
  that the sets $(-\infty,a)$ and $(b, \infty)$ for some $a, b \in \R$
  form a subbase of $\R$, our proof is reduced to the following:

  \noindent \textit{Claim}: $\hat{h}^{-1} \left( -\infty,a\right)$ and
  $ \hat{h}^{-1}\left(b,\infty \right)$ are open in $T$.

  \noindent We start showing the truthfulness of the claim for
  $(-\infty,a)$. Denote with $\pi_{T}:T \times Z \to T$ the usual
  projection and remember that is a map continuous and open. Noticing
  that
  $\hat{h}^{-1}(-\infty,a)=\left(\pi_{T} \circ h^{-1}
  \right)(-\infty,a)$ it is straightforward to conclude. On the other
  hand, consider an half line $(b,\infty)$ for some $ b \in \R$. If
  $ t \in T$ is such that $\hat{h}(t) >b$ then $h(t,z) >b$ for any
  $z \in Z$. In other words if $t \in \hat{h}^{-1}(b,\infty)$ then
  $(t,z) \in h^{-1}(b,\infty)$ for any $z \in Z$. Since $h$ is
  continuous and $(b,\infty)$ is open, for any
  $(t,z) \in \hat{h}^{-1}(b,\infty) \times Z$ it is possible to find a
  neighbourhood $U_{t,z} \times V_{t,z}$ such that
\[
  (t,z) \in U_{t,z} \times V_{t,z} \subset h^{-1}(b,\infty).
\]
Hence $\left\{V_{t,z}\right\}_{z \in Z}$ is an open covering of $Z$. Exploiting the compactness of $Z$, we can extract a finite
subcovering indexed by a finite set $\mathcal{K}(t)$ with the property
\[
\lbrace t \rbrace \times Z \subset \bigcap_{i\in \mathcal{K}(t)} U_{t,z_i} \times Z \subset h^{-1}(b,\infty).
\]
Thus, we can conclude observing that
\begin{equation} \label{eq35}
\hat{h}^{-1}\left(b, \infty \right)= \bigcup_{t\in \hat{h}^{-1}(b,\infty)} \bigcap_{i \in \mathcal{K}(t)} U_{(t,z_i)}.
\end{equation}
\end{proof}
\begin{remark}
We can even strengthen the result above for functions defined on non
compact spaces requiring divergence at infinity. For instance,
suppose $h\colon (\R_+)^2 \to \R$ is continuous and such that
$\lim_{z \to \infty} h(t,z)=\infty$ for any $t \in \R_+$. The proof
for sets as $(-\infty, a)$ is the same. As regard sets of the type
$(b, \infty)$ we observe that $\hat{h}^{-1}(b,\infty)$ can be written
as in \eqref{eq35} plus an half line due to the divergence of the
function at infinity.
\end{remark}
\begin{proposition} \label{prop3}
Let $a, b \in \R^+$ such that $a^{(N-4s)/2s}b\geq L_{N,s}$. For any $u \in X_0^s(\Omega) \setminus \lbrace 0\rbrace$ there is a unique $\lambda = \lambda_0^s(u)$ that solves \eqref{eq23}.
\end{proposition}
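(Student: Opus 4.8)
The plan is to exploit the affine dependence of the fiber map on $\lambda$. Write $\mathcal{J}_{a,b}^{\lambda,u}(\zeta)=A(\zeta)-\lambda B(\zeta)$, where
\[
A(\zeta):=\frac{a}{2}\zeta^2\Vert u \Vert^2+\frac{b}{4}\zeta^4\Vert u \Vert^4-\frac{\zeta^{2^*_s}}{2^*_s}\Vert u \Vert_{2^*_s}^{2^*_s},\qquad B(\zeta):=\int_{\Omega}G(x,\zeta u)\,dx.
\]
First I would record two positivity facts that make the hypothesis on $a,b$ enter the argument. By $(H_2)$ we have $G(x,t)>0$ for every $t\neq 0$, hence $B(\zeta)>0$ for all $\zeta>0$; and by Proposition~\ref{prop1}$(i)$ together with $f_{N,s}\geq 0$, which holds precisely because $a^{(N-4s)/2s}b\geq L_{N,s}$ (see Remark~\ref{rem2}), we have $A(\zeta)>0$ for all $\zeta>0$.

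Next I would decouple the first two equations of \eqref{eq23} through the quotient $\psi(\zeta):=A(\zeta)/B(\zeta)$, which is well defined and positive on $(0,\infty)$. Since $B(\zeta)>0$, the equation $\mathcal{J}_{a,b}^{\lambda,u}(\zeta)=0$ is equivalent to $\lambda=\psi(\zeta)$; inserting this into $(\mathcal{J}_{a,b}^{\lambda,u})'(\zeta)=A'(\zeta)-\lambda B'(\zeta)$ gives $A'(\zeta)-\psi(\zeta)B'(\zeta)=B(\zeta)\psi'(\zeta)$, so the second equation reduces to $\psi'(\zeta)=0$. Thus a pair $(\lambda,\zeta)$ solves the first two equations if and only if $\zeta$ is a critical point of $\psi$ and $\lambda=\psi(\zeta)$.

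I would then study $\psi$ on $(0,\infty)$. Arguing as in the proof of Proposition~\ref{prop2}, $(H_4)$ gives $B(\zeta)=o(\zeta^2)$ while $A(\zeta)\sim\frac{a}{2}\zeta^2\Vert u \Vert^2$ as $\zeta\to 0^+$, so $\psi(\zeta)\to+\infty$; and $(H_3)$ gives $B(\zeta)=O(\zeta^p)$ with $p<2^*_s<4$ while $A(\zeta)\sim\frac{b}{4}\zeta^4\Vert u \Vert^4$ as $\zeta\to\infty$, so $\psi(\zeta)\to+\infty$ there as well. By continuity $\psi$ attains a global minimum at some interior $\zeta^*>0$, and I set $\lambda_0^s(u):=\psi(\zeta^*)=\min_{\zeta>0}\psi(\zeta)>0$. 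The single factorization $\mathcal{J}_{a,b}^{\lambda,u}(\zeta)=B(\zeta)\bigl(\psi(\zeta)-\lambda\bigr)$ then delivers both existence and uniqueness. For $\lambda=\lambda_0^s(u)$ the factor $\psi-\lambda_0^s(u)$ is nonnegative and vanishes exactly at the minimizers of $\psi$, so $\zeta^*$ is a global minimizer of the fiber map with value $0$, and $\psi'(\zeta^*)=0$ supplies the second equation; hence all three conditions of \eqref{eq23} hold. For uniqueness, if $(\lambda,\zeta)$ solves \eqref{eq23}, the first and third equations force $\inf_{\varrho>0}\mathcal{J}_{a,b}^{\lambda,u}(\varrho)=0$; if $\lambda<\lambda_0^s(u)$ the factorization makes the fiber map strictly positive on $(0,\infty)$, contradicting the first equation, while if $\lambda>\lambda_0^s(u)$ then $\mathcal{J}_{a,b}^{\lambda,u}(\zeta^*)=B(\zeta^*)\bigl(\lambda_0^s(u)-\lambda\bigr)<0$, contradicting $\inf=0$. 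Therefore $\lambda=\lambda_0^s(u)$.

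The main obstacle is reconciling the three, a priori overdetermined, conditions in \eqref{eq23}. The decisive observation is that conditions one and three together say exactly that $\inf_{\varrho>0}\mathcal{J}_{a,b}^{\lambda,u}=0$, and that the factorization $\mathcal{J}_{a,b}^{\lambda,u}=B(\psi-\lambda)$ converts this into $\lambda=\min\psi$. The only genuinely computational point is the two-sided blow-up of $\psi$, ensuring that its minimum is interior and hence a true critical point; this is exactly where the growth bounds $(H_3)$, $(H_4)$ and the strict inequality $2^*_s<4$ are used.
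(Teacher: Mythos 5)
Your argument is correct, and it takes a genuinely different route from the paper's. The paper works with the two--variable function $h(\lambda,\zeta)=\mathcal{J}_{a,b}^{\lambda,u}(\zeta)$ and the marginal $i(\lambda)=\inf_{\zeta\geq 0}h(\lambda,\zeta)$: it invokes the auxiliary continuity result (Proposition \ref{prop:3.6}), the monotonicity of $h$ in $\lambda$, and the sign of $i$ for small and large $\lambda$ to locate $\lambda_0^s(u)$ as a threshold value; as written, that proof is rather soft about why the infimum is attained at a positive $\zeta$ at the threshold, and about uniqueness. Your factorization $\mathcal{J}_{a,b}^{\lambda,u}=B\,(\psi-\lambda)$ with $\psi=A/B$ replaces the threshold argument by the explicit variational characterization $\lambda_0^s(u)=\min_{\zeta>0}\psi(\zeta)$, from which existence, attainment and uniqueness all drop out in a couple of lines and which bypasses Proposition \ref{prop:3.6} entirely. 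The ingredients you use are exactly the ones available: $B>0$ on $(0,\infty)$ from $(H_2)$ and $u\neq 0$; $A>0$ from Proposition \ref{prop1}$(i)$ and $f_{N,s}\geq 0$ (Remark \ref{rem2}), which is precisely where $a^{(N-4s)/2s}b\geq L_{N,s}$ enters; and the two--sided divergence of $\psi$ from $(H_3)$, $(H_4)$ and $2^*_s<4$. Two small points would be worth making explicit in a write--up: the system \eqref{eq23} must be read with $\zeta>0$ (otherwise $\zeta=0$ yields spurious solutions for every $\lambda\leq\lambda_0^s(u)$; your uniqueness step tacitly uses $B(\zeta)>0$, hence $\zeta>0$), and $B$ is $C^1$ with $B'(\zeta)=\int_\Omega g(x,\zeta u)u\,dx$ by $(H_3)$ and dominated convergence, which is what justifies $\bigl(\mathcal{J}_{a,b}^{\lambda,u}\bigr)'(\zeta)=B(\zeta)\psi'(\zeta)$ along the locus $\lambda=\psi(\zeta)$. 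As a bonus, your explicit formula makes the degree--zero homogeneity of $u\mapsto\lambda_0^s(u)$ used in Proposition \ref{prop6} and the dichotomy of Corollary \ref{cor1} immediate, so this route would actually streamline the surrounding material.
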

\begin{proof}
  Define the continuous function
  $h(\lambda, \zeta):=\mathcal{J}_{a,b}^{\lambda,u}(\zeta)$. We start
  pointing out that $h(0,\zeta)$ is positive on $(0,\infty)$ (see
  Remark \ref{rem2}) and goes to $+\infty$ as $\zeta \to \infty$. By
  continuity we have that for $\lambda$ small $h(\lambda, \zeta)$ is
  nonnegative for all $\zeta \in \R^+$. Moreover, from Proposition
  \ref{prop2} it follows that for any $\lambda \geq 0$ there is a
  neighbourhood $V_{\lambda}$ such that $h(\lambda, \zeta)>0$ for all
  $\zeta \in V_{\lambda} \cap (0,\infty)$. We also have that
  $h(\lambda, \zeta) \to - \infty$ as $\lambda \to \infty$ for any
  $\zeta >0$. At this point we define the continuous function (refer
  to Proposition \ref{prop:3.6})
  $i(\lambda)=\inf_{\zeta \in \left[0,\infty \right)} h(\lambda,
  \zeta)$. From the previous considerations, we can deduce that for
  $\lambda$ sufficiently large the function $i$ is negative, while if
  we restrict $\lambda$ it is equal to zero. This is due to the fact
  that the function $h(\lambda, \zeta)$ for $\lambda$ big enough has a
  global minimizer in the variable $\zeta$ at a negative
  level. Shrinking $\lambda$, and remembering that all continuous
  functions are homotopically equivalent, this minimizer becomes local
  and attained at a positive level. All these arguments ensure us the
  existence of the desired $\lambda_{0}^s(u)$ that solves
  \eqref{eq23}.
\end{proof}
\begin{corollary} \label{cor1} Let
  $u \in X_0^s(\Omega)\setminus \lbrace0\rbrace$. The number
  $\lambda_0^s(u)$ is the only parameter such that
  \begin{equation*}
  \inf_{\zeta \in (0,\infty)}
  \mathcal{J}_{a,b}^{\lambda_0^s(u),u}(\zeta)=0.
  \end{equation*} 
  In addition,
\begin{displaymath}
\inf_{\zeta \in (0,\infty)} \mathcal{J}_{a,b}^{\lambda,u}(\zeta)
\begin{cases}
<0 &\hbox{if $\lambda > \lambda_0^s(u)$} \\
=0 &\hbox{if $0 \leq \lambda \leq \lambda_0^s(u)$}.
\end{cases}
\end{displaymath}
\end{corollary}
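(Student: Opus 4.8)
The plan is to reduce everything to the one–variable function
\[
\Phi(\lambda):=\inf_{\zeta>0}\mathcal{J}_{a,b}^{\lambda,u}(\zeta)
\]
(the function $i(\lambda)$ appearing in the proof of Proposition \ref{prop3}) and to exploit the strict monotonicity of each fiber map in the parameter. The decisive elementary observation is that, in
\[
\mathcal{J}_{a,b}^{\lambda,u}(\zeta)=\frac{a}{2}\zeta^2\Vert u\Vert^2+\frac{b}{4}\zeta^4\Vert u\Vert^4-\frac{\zeta^{2^*_s}}{2^*_s}\Vert u\Vert^{2^*_s}_{2^*_s}-\lambda\int_{\Omega}G(x,\zeta u)\,dx,
\]
the coefficient $\int_{\Omega}G(x,\zeta u)\,dx$ is strictly positive for every $\zeta>0$. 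Indeed, by $(H_2)$ the map $g(x,\cdot)$ has the sign of its argument, so $G(x,t)>0$ for every $t\neq0$ and a.e.\ $x$; since $u\neq0$ on a set of positive measure, the integral is strictly positive. Hence for fixed $\zeta>0$ the map $\lambda\mapsto\mathcal{J}_{a,b}^{\lambda,u}(\zeta)$ is affine with negative slope, and passing to the infimum over $\zeta$ shows that $\Phi$ is nonincreasing in $\lambda$.

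Next I would record two facts. Since every term of $\mathcal{J}_{a,b}^{\lambda,u}(\zeta)$ vanishes as $\zeta\to0^+$, one has $\mathcal{J}_{a,b}^{\lambda,u}(\zeta)\to0$, whence $\Phi(\lambda)\le0$ for all $\lambda\ge0$; thus $\Phi(\lambda)=0$ exactly when $\mathcal{J}_{a,b}^{\lambda,u}(\zeta)\ge0$ for all $\zeta>0$. Moreover Proposition \ref{prop3} furnishes a point $\zeta_0>0$ at which the system \eqref{eq23} holds for $\lambda=\lambda_0^s(u)$; in particular $\mathcal{J}_{a,b}^{\lambda_0^s(u),u}(\zeta_0)=0$ and this value is the infimum, so $\Phi(\lambda_0^s(u))=0$ and the value $0$ is \emph{attained} at the interior point $\zeta_0$.

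The dichotomy now follows from the same strict monotonicity used in both directions. For $\lambda>\lambda_0^s(u)$, evaluating at the interior minimizer $\zeta_0$ gives
\[
\mathcal{J}_{a,b}^{\lambda,u}(\zeta_0)=\mathcal{J}_{a,b}^{\lambda_0^s(u),u}(\zeta_0)-(\lambda-\lambda_0^s(u))\int_{\Omega}G(x,\zeta_0 u)\,dx<0,
\]
so $\Phi(\lambda)<0$. For $0\le\lambda<\lambda_0^s(u)$ I would argue by contradiction: if $\mathcal{J}_{a,b}^{\lambda,u}(\zeta_1)<0$ for some $\zeta_1>0$, then raising the parameter to $\lambda_0^s(u)$ only decreases the value, forcing $\mathcal{J}_{a,b}^{\lambda_0^s(u),u}(\zeta_1)<0$ and contradicting $\Phi(\lambda_0^s(u))=0$; hence $\mathcal{J}_{a,b}^{\lambda,u}\ge0$ on $(0,\infty)$ and $\Phi(\lambda)=0$. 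The same comparison, applied to a hypothetical interior zero, shows that for $\lambda<\lambda_0^s(u)$ the value $0$ is never attained at a positive $\zeta$ but only approached as $\zeta\to0^+$; together with $\Phi(\lambda)<0$ for $\lambda>\lambda_0^s(u)$ this yields the uniqueness of the first assertion, namely that $\lambda_0^s(u)$ is the only parameter for which the infimum equals $0$ and is attained, i.e.\ for which \eqref{eq23} is solvable.

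The one genuinely delicate point, and the place where Proposition \ref{prop3} is indispensable, is the attainment of the infimum at an interior point $\zeta_0>0$ when $\lambda=\lambda_0^s(u)$: the entire argument rests on being able to evaluate the fiber map at a \emph{fixed} positive $\zeta_0$ where the value is exactly $0$, rather than only in the degenerate limit $\zeta\to0^+$. Once that interior zero is secured, every remaining step is a direct consequence of the strict positivity of $\int_{\Omega}G(x,\zeta u)\,dx$ and is routine.
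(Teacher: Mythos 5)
Your proof is correct and, in substance, it supplies the argument the paper only gestures at: the published proof of Corollary \ref{cor1} is the single sentence that the statement ``follows immediately from the proof of Proposition \ref{prop3}'', so everything rests on the analysis of $i(\lambda)=\inf_{\zeta>0}\mathcal{J}_{a,b}^{\lambda,u}(\zeta)$ carried out there. What you make explicit, and what the paper leaves implicit, is the mechanism: the strict positivity of $\int_{\Omega}G(x,\zeta u)\,dx$ for every $\zeta>0$ (from $(H_2)$ together with $u\neq 0$ on a set of positive measure) makes each fiber value strictly decreasing in $\lambda$, and comparison at the interior zero $\zeta_0$ furnished by Proposition \ref{prop3} then yields both branches of the dichotomy with no further work. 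This is cleaner than the paper's own justification of Proposition \ref{prop3}, which argues through continuity, limiting behaviour in $\lambda$, and a rather vague homotopy remark instead of through monotonicity in the parameter. One point worth flagging: as literally written, the first assertion of the corollary (uniqueness of the parameter with zero infimum) is inconsistent with the second display, which says the infimum equals $0$ for \emph{every} $\lambda\in[0,\lambda_0^s(u)]$. Your resolution --- that $\lambda_0^s(u)$ is the unique parameter for which the zero infimum is \emph{attained} at some $\zeta>0$, i.e.\ for which system \eqref{eq23} is solvable --- is the only reading compatible with Proposition \ref{prop3}, and your comparison argument does establish the required non-attainment for $\lambda<\lambda_0^s(u)$ as well as strict negativity for $\lambda>\lambda_0^s(u)$. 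I see no gaps.
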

\begin{proof}
  The statement follows immediately from the proof of Proposition
  \ref{prop3}.
\end{proof}
Now we define a suitable parameter independent from $u$ that will play
a crucial r\^{o}le in the sequel. More precisely, we set
\[
  \overline{\lambda}_0^{s}:= \inf_{u \in X_0^s(\Omega) \setminus
    \lbrace 0\rbrace} \lambda_0^s(u).
\]
The next Proposition shows how the parameter $\overline{\lambda}_0^{s}$
varies depending on the choice made on $a$ and $b$.
\begin{proposition} \label{prop6}
The following statements hold:
\begin{description}
\item[$(i)$] if $a^{(N-4s)/2s}b>L_{N,s}$ then
  $\overline{\lambda}_0^s>0;$
\item[$(ii)$] if $a^{(N-4s)/2s}b=L_{N,s}$ then
  $\overline{\lambda}_0^s=0$. Furthermore, if
  $(u_k)_k \subset X_0^s(\Omega) \setminus \lbrace 0 \rbrace$ is a
  sequence such that $\lambda_0^s(u_k) \to \overline{\lambda}_0^s$ as
  $k \to \infty$, we have that $u_k \rightharpoonup 0$ and
  \[\frac{\Vert u_k \Vert_2^2}{\Vert u_k \Vert_{2^*_s}^2} \to S_{N,s}.\]
\end{description}
\end{proposition}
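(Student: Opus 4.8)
The plan is to compare, in both regimes, the fibering map with the one–variable model
\[
\phi_Q(t):=\frac{a}{2}t^2+\frac{b}{4}t^4-\frac{Q}{2^*_s}t^{2^*_s},
\qquad Q=Q(u):=\frac{\Vert u\Vert_{2^*_s}^{2^*_s}}{\Vert u\Vert^{2^*_s}},
\]
so that $\mathcal{J}_{a,b}^{0,u}(\zeta)=\phi_{Q(u)}(\zeta\Vert u\Vert)$ and $\phi_{S_{N,s}^{-2^*_s/2}}(t)=t^2 f_{N,s}(t)$. By \eqref{Sobolev} one always has $Q(u)\leq S_{N,s}^{-2^*_s/2}$, with equality only for extremals, and indeed $\phi_Q(t)=t^2 f_{N,s}(t)+\tfrac{1}{2^*_s}(S_{N,s}^{-2^*_s/2}-Q)t^{2^*_s}>0$ on $(0,\infty)$ whenever $Q<S_{N,s}^{-2^*_s/2}$. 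Since $\mathcal{J}_{a,b}^{\lambda,tu}(\zeta)=\mathcal{J}_{a,b}^{\lambda,u}(t\zeta)$, the number $\lambda_0^s(u)$ is invariant under dilations, so I may normalise $\Vert u\Vert=1$ throughout.

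For $(i)$ we have $f_{N,s}(m_{N,s})>0$, hence $f_{N,s}>0$ on $[0,\infty)$ (Remark \ref{rem2}), and Proposition \ref{prop1}$(i)$ gives $\mathcal{J}_{a,b}^{0,u}(\zeta)\geq\zeta^2 f_{N,s}(\zeta)$ for $\Vert u\Vert=1$. The elementary observation is that $\zeta\mapsto\zeta^2 f_{N,s}(\zeta)/(\zeta^2+\zeta^{p})$ is continuous and strictly positive on $(0,\infty)$ with limits $a/2$ at $0^+$ and $+\infty$ at $+\infty$ (recall $2<p<2^*_s<4$), so $\zeta^2 f_{N,s}(\zeta)\geq R_0(\zeta^2+\zeta^{p})$ for some $R_0>0$. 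On the other hand $(H_4)$ kills the linear part near the origin and $(H_3)$ controls the growth, so with the Sobolev embedding $\int_\Omega G(x,\zeta u)\,dx\leq\varepsilon C\zeta^2+C_\varepsilon\zeta^{p}$ uniformly for $\Vert u\Vert=1$. Fixing $\varepsilon$ and then taking $\lambda^*>0$ small enough gives $\mathcal{J}_{a,b}^{\lambda,u}(\zeta)\geq0$ for all $\zeta>0$, all $u$ with $\Vert u\Vert=1$ and all $\lambda\leq\lambda^*$; by Corollary \ref{cor1} this forces $\lambda_0^s(u)\geq\lambda^*$, whence $\overline{\lambda}_0^s\geq\lambda^*>0$.

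For the first claim of $(ii)$ now $f_{N,s}(m_{N,s})=0$, and I would exhibit a sequence along which $\lambda_0^s\to0$. Let $u_\varepsilon$ be the standard family of truncated fractional bubbles concentrating at an interior point of $\Omega$, normalised by $\Vert u_\varepsilon\Vert=1$; the classical asymptotics give $S_{N,s}^{-2^*_s/2}-\Vert u_\varepsilon\Vert_{2^*_s}^{2^*_s}=O(\varepsilon^{N-2s})$. Evaluating the fiber map at $\zeta=m_{N,s}$ and using $f_{N,s}(m_{N,s})=0$ yields
\[
\mathcal{J}_{a,b}^{0,u_\varepsilon}(m_{N,s})=\phi_{Q(u_\varepsilon)}(m_{N,s})
=\frac{m_{N,s}^{2^*_s}}{2^*_s}\left(S_{N,s}^{-2^*_s/2}-\Vert u_\varepsilon\Vert_{2^*_s}^{2^*_s}\right)=O(\varepsilon^{N-2s}).
\]
Meanwhile $(H_2)$ provides $\mu>0$ and an interval $I\subset(0,\infty)$ with $g(\cdot,t)\geq\mu$ for $t\in I$; bounding from below the measure (of order $\varepsilon^{N/2}$) of the concentration set $\{x:m_{N,s}u_\varepsilon(x)\geq\sup I\}$ gives $\int_\Omega G(x,m_{N,s}u_\varepsilon)\,dx\geq c\,\varepsilon^{N/2}$. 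Hence $\mathcal{J}_{a,b}^{\lambda,u_\varepsilon}(m_{N,s})<0$ as soon as $\lambda\geq C\varepsilon^{(N-4s)/2}$, so Corollary \ref{cor1} yields $\lambda_0^s(u_\varepsilon)\leq C\varepsilon^{(N-4s)/2}\to0$ because $N>4s$, and therefore $\overline{\lambda}_0^s=0$. I expect this rate comparison — that the energy gap $O(\varepsilon^{N-2s})$ closes faster than the subcritical mass $\varepsilon^{N/2}$ that the perturbation can detect — to be the main obstacle; the hypothesis $N>4s$ is exactly what makes $(N-4s)/2>0$.

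For the second claim of $(ii)$, take $(u_k)_k$ with $\lambda_0^s(u_k)\to\overline{\lambda}_0^s=0$, normalise $\Vert u_k\Vert=1$, set $\lambda_k:=\lambda_0^s(u_k)$, and let $v_k=\zeta_k u_k$ be the point realising the infimum in Corollary \ref{cor1}, so that
\[
\mathcal{I}_{a,b}^{\lambda_k}(v_k)=0,\qquad \left(\mathcal{I}_{a,b}^{\lambda_k}\right)'(v_k)[v_k]=0.
\]
First I would show $0<c\leq\Vert v_k\Vert\leq C$: the coercivity coming from $2^*_s<4$ excludes $\Vert v_k\Vert\to\infty$, while $(H_4)$ excludes $\Vert v_k\Vert\to0$, since otherwise the quadratic term would dominate the perturbation. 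Writing $t_k=\Vert v_k\Vert$ and $Q_k=Q(v_k)$, the two identities read $\phi_{Q_k}(t_k)=\lambda_k\int_\Omega G(x,v_k)\,dx$ and $t_k\phi_{Q_k}'(t_k)=\lambda_k\int_\Omega g(x,v_k)v_k\,dx$; as $\lambda_k\to0$ while both integrals stay bounded (subcritical growth and $\Vert v_k\Vert$ bounded), a subsequence has $t_k\to t_\infty\in(0,\infty)$, $Q_k\to Q_\infty$, with $\phi_{Q_\infty}(t_\infty)=0$. Since $\phi_Q>0$ on $(0,\infty)$ for $Q<S_{N,s}^{-2^*_s/2}$, this forces $Q_\infty=S_{N,s}^{-2^*_s/2}$, i.e.\ $\Vert v_k\Vert^2/\Vert v_k\Vert_{2^*_s}^2\to S_{N,s}$, which is the asserted limit by dilation invariance of the quotient. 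Finally $(u_k)_k$ is then a minimising sequence for $S_{N,s}$; a Brezis--Lieb splitting of both $\Vert\cdot\Vert^2$ and $\Vert\cdot\Vert_{2^*_s}^{2^*_s}$, combined with the strict superadditivity of $t\mapsto t^{2^*_s/2}$, shows that a nonzero weak limit would force strong convergence to an extremal of $S_{N,s}$, impossible on the bounded domain $\Omega$ by \cite[Theorem 1.1]{MR2064421}; hence $u_k\rightharpoonup0$.
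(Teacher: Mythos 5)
Your proposal is correct, and at the delicate step it is actually more carefully bookkept than the paper's own argument, although the overall strategy coincides. For part $(i)$ the paper argues by contradiction: it takes $(u_k)$ with $\lambda_0^s(u_k)\to 0$, normalizes, extracts the zeros $\zeta_k\to\overline{\zeta}>0$ of the fiber maps and passes to the limit in $\mathcal{J}_{a,b}^{\lambda_k,u_k}(\zeta_k)=0$ to contradict $f_{N,s}>0$; your direct route --- the uniform lower bound $\zeta^2 f_{N,s}(\zeta)\geq R_0(\zeta^2+\zeta^p)$ against the uniform upper bound $\int_\Omega G(x,\zeta u)\,dx\leq \varepsilon C\zeta^2+C_\varepsilon\zeta^p$ on the unit sphere --- avoids compactness altogether and produces an explicit $\lambda^*$; both work. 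For the last claim of $(ii)$ your analysis via the model function $\phi_Q$ (normalize, extract $\zeta_k\to\overline{\zeta}>0$, pass to the limit, invoke non-attainment of $S_{N,s}$ on a bounded domain) is the same argument as the paper's. The one point you should make explicit is in the first claim of $(ii)$: your two exponents --- deficit $S_{N,s}^{-2^*_s/2}-\Vert u_\varepsilon\Vert_{2^*_s}^{2^*_s}=O(\varepsilon^{N-2s})$ and concentration-set measure of order $\varepsilon^{N/2}$ --- are simultaneously valid only for bubbles concentrating at scale $\varepsilon$ (the normalization of \cite[Propositions 21 and 22]{MR3271254}); with that convention the comparison $\lambda\varepsilon^{N/2}\gg\varepsilon^{N-2s}$ closes precisely because $N>4s$, as you note. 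The paper instead uses $v_\varepsilon=\varphi(x)(\varepsilon+|x|^2)^{-(N-2s)/2}$, which concentrates at scale $\sqrt{\varepsilon}$: there the deficit is only $O(\varepsilon^{(N-2s)/2})$ while the set $\lbrace |x|\leq \sqrt{\varepsilon}R\rbrace$ over which it bounds $\int_\Omega G$ has measure $\varepsilon^{N/2}$, and with those two bounds the rates do \emph{not} close --- the paper's final display absorbs this into the sign of an $O(1)$ term (one would need to integrate over $|x|\lesssim\varepsilon^{1/4}$ to repair it in that parametrization). So your matched-exponent version is the right way to run this step; just state explicitly which family $u_\varepsilon$ you are using, so that both rates are visibly computed for the same parametrization.
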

\begin{proof}
  $(i)$ As a first step we observe that the function
  $u \to \lambda_0^s(u)$ is well defined and homogeneous of degree
  zero. In fact, taking a couple $(\zeta, \lambda_0^s(u))$ that solves
  \eqref{eq23} and $\mu >0$, since
  $\mathcal{J}_{a,b}^{\lambda, \mu
    u}(\zeta)=\mathcal{J}_{a,b}^{\lambda, u}(\mu \zeta)$ and
  $\left(\mathcal{J}_{a,b}^{\lambda, \mu
    u}\right)'(\zeta)=\left(\mathcal{J}_{a,b}^{\lambda, u}\right)'(\mu \zeta)$ we have that
  also $(\frac{\zeta}{\mu},\lambda_0^s)$ is a solution of
  \eqref{eq23}. From the uniqueness of the parameter
  $\lambda^s_0(\mu u)$ it follows that
  $\lambda_0^s(\mu u)=\lambda_0^s(u)$. Now, assume by contradiction
  that $\overline{\lambda}_0^s=0$. If that, there is a sequence
  $(u_k)_k \subset X_0^s(\Omega) \setminus \lbrace 0 \rbrace$ such
  that $\lambda_k:=\lambda_0^s(u_k) \to 0$. By homogeneity, we may assume that $\Vert u_k \Vert=1$. From
  Proposition \ref{prop3} it follows that there exists $\zeta_k>0$
  such that $\mathcal{J}_{a,b}^{\lambda_k,u_k}(\zeta_k)=0$, that is
\[
  \frac{a}{2}+\frac{b}{2}\zeta_k^2-\frac{1}{2^*_s}\Vert u_k
  \Vert_{2^*_s}^{2^*_s} \zeta_k^{2^*_s-2}-\lambda_k \int_{\Omega}
  \frac{G(x,\zeta_k u)}{\zeta_k^2}\, dx=0.
\]
Recalling Remark \ref{rem2}, we get that
\begin{equation} \label{eq24} f_{N,s}(\zeta_k)<
  \frac{a}{2}+\frac{b}{2}\zeta_k^2-\frac{1}{2^*_s}\Vert u_k
  \Vert_{2^*_s}^{2^*_s} \zeta_k^{2^*_s-2}=\lambda_k \int_{\Omega}
  \frac{G(x,\zeta_k u)}{\zeta_k^2}\, dx.
\end{equation}
Hypotheses $H_3$ and $H_4$ implies that for any $\varepsilon>0$ there
exists a positive constant $c>0$ such that
$|G(x,t)|<\frac{\varepsilon}{2}t^2+\frac{c}{p}|t|^p$ for all
$x \in \Omega$ and all $t \in \R$. So, the sequence $(\zeta_k)_k$ must
be bounded, and up to subsequence converges to some
$\overline{\zeta}>0$. At this point, letting $k \to \infty$,
\eqref{eq24} becomes
\[
0<f_{N,s}(\overline{\zeta})= \lim_{k \to \infty} \lambda_k \int_{\Omega} \frac{G(x, \zeta_k u_k)}{\zeta_k^2} \, dx=0
\]
which is clearly a contradiction.

$(ii)$ Up to a translation, we can suppose that $0 \in \Omega$. Take a
nonnegative cut-off function such that $\varphi (x)=1$ in $B_R(0)$ for
some $R>0$. Fix $\varepsilon>0$ and consider
\[
v_{\varepsilon}(x):=\frac{\varphi(x)}{\left( \varepsilon + |x|^2 \right)^{\frac{N-2s}{2}}}.
\]
We set $u_{\varepsilon}:= v_{\varepsilon}/ \Vert v_{\varepsilon} \Vert$ and we notice that from \cite[Propositions 21 and 22]{MR3271254} it follows that
\[
\Vert u_{\varepsilon}\Vert=1, \quad \Vert u_{\varepsilon}\Vert_{2^*_s}^{2^*_s} \geq S_{N,s}^{-\frac{2^*_s}{2}}+O(\varepsilon^{\frac{N-2s}{2}}), \quad \Vert v_{\varepsilon} \Vert \leq \varepsilon^{-\frac{N-2s}{4}} C_1+O(1)
\]
as $\varepsilon \to 0$ for some $C_1>0$. In virtue of the previous estimates, we get
\begin{align*}
\mathcal{J}_{a,b}^{\lambda, u_{\varepsilon}}(\zeta)&=\frac{a}{2} \zeta^2+\frac{b}{4}\zeta^{4}-\frac{\zeta^{2^*_s}}{2^*_s} \Vert u_{\varepsilon} \Vert_{2^*_s}^{2^*_s}-\lambda \int_{\Omega}G(x, \zeta u_{\varepsilon})\, dx \\
& \leq \zeta^2 f_{N,s}(\zeta)- \frac{\zeta^{2^*_s}}{2^*_s}O(\varepsilon^{\frac{N-2s}{2}})- \lambda \int_{\Omega}G(x,\zeta u_{\varepsilon})\, dx.
\end{align*}
Choosing as $\zeta=m_{N,s}$ we obtain
\begin{equation} \label{eq25}
\mathcal{J}_{a,b}^{\lambda, u_{\varepsilon}}(m_{N,s})=-\frac{m_{N,s}^{2^*_s}}{2^*_s}O(\varepsilon^{\frac{N-2s}{2}})-\lambda \int_{\Omega}G(x,m_{N,s} u_{\varepsilon}) \, dx.
\end{equation}
\textit{Claim:} There exists a constant $C_2>0$ such that $\int_{\Omega}G(x,m_{N,s} u_{\varepsilon}) \, dx \geq C_2 \varepsilon^{\frac{N}{2}}$ as $\varepsilon \to 0$.

assumptions $H_2$ implies the existence of $\mu>0$ such that
$g(x,t) \geq \chi_I$ where $I$ is an open interval of $(0, \infty)$
and $\chi_I$ is its characteristic function. So, there exists
$\beta>0$ such that
$G(x,t) \geq \tilde{G}(t):= \mu \int_0^t \chi_I(\tau) d\tau \geq
\beta$ for any $t \geq \alpha$ where $\alpha:= \inf I$ is positive. At
this point, we have
\begin{align} \label{eq26}
\int_{\Omega}G(x,m_{N,s}u_{\varepsilon})\, dx & \geq \int_{|x| \leq R} G(x,m_{N,s}u_{\varepsilon})\, dx=\int_{|x| \leq R}G\left(x, \frac{m_{N,s}}{\Vert v_{\varepsilon} \Vert (\varepsilon+|x|^2)^{\frac{N-2s}{2}}} \right)\, dx \notag \\
& \geq \int_{|x| \leq R} \tilde{G} \left(\frac{m_{N,s}}{\Vert v_{\varepsilon}\Vert (\varepsilon + | x|^2)^{\frac{N-2s}{2}}}\, \right) dx \\
& = \int_0^R \tilde{G} \left(\frac{m_{N,s}}{\Vert v_{\varepsilon}\Vert (\varepsilon + w^2)^{\frac{N-2s}{2}}} \right)w^{N-1} \, dw \notag \\
& \geq \int_0^{\sqrt{\varepsilon}R} \tilde{G} \left(\frac{m_{N,s}}{\Vert v_{\varepsilon}\Vert (\varepsilon + w^2)^{\frac{N-2s}{2}}} \right)w^{N-1} \, dw.
\end{align}
We emphasize that if
\[
\frac{m_{N,s}}{\Vert v_{\varepsilon}\Vert (\varepsilon + w^2)^{\frac{N-2s}{2}}} \geq \alpha
\]
then
\[
\int_0^{\sqrt{\varepsilon}R} \tilde{G} \left(\frac{m_{N,s}}{\Vert v_{\varepsilon}\Vert (\varepsilon + w^2)^{\frac{N-2s}{2}}} \right)w^{N-1} \, dw \geq \beta \int_0^{\sqrt{\varepsilon}R} w^{N-1} \, dw= C_2 \varepsilon^{\frac{N}{2}}.
\]
Since $ w \in \left[0, \sqrt{\varepsilon} R\right]$, we have
\begin{align*}
\frac{m_{N,s}}{\Vert v_{\varepsilon}\Vert (\varepsilon+ w^2)^{\frac{N-2s}{2}}} & \geq \frac{m_N,s}{\varepsilon^{-\frac{N-2s}{4}}\left( C_1+O(\varepsilon^{\frac{N-2s}{4}}) \right)\left(\varepsilon+ R^2\varepsilon \right)^{\frac{N-2s}{2}}} \\
&=\frac{m_N,s}{\varepsilon^{\frac{N-2s}{4}}\left( C_1+O(\varepsilon^{\frac{N-2s}{4}}) \right)\left(1+R^2 \right)^{\frac{N-2s}{2}}} \geq \alpha
\end{align*}
as $\varepsilon \to 0$ proving the claim.

As a consequence of the claim and \eqref{eq26} we obtain
\[
\mathcal{J}_{a,b}^{\lambda, u_{\varepsilon}}(m_{N,s})\leq \varepsilon^{\frac{N-2s}{2}} \left( -\frac{1}{2^*_s}O(1)-\lambda C_2 \varepsilon^s \right)<0.
\]
Hence, $\lambda_0^s(u_{\varepsilon})<\lambda$. We can now let $\lambda \to 0$ and we get
$\overline{\lambda}_0^s=0$ as desired.  In order to see the last part,
let $(u_k)_k \subset X_0^s(\Omega) \setminus \lbrace 0 \rbrace $ be a
sequence such that
$\lambda_k:= \lambda_0^s(u_k) \to \overline{\lambda}_0^s=0$. As we did
in part $i)$, we suppose $\Vert u_k \Vert=1$, $u_k \rightharpoonup u$
and that there exists $\zeta_k>0$ such that
\begin{equation} \label{eq27}
\frac{a}{2}+\frac{b}{4}\zeta_k^2+ \frac{\zeta^{2^*_s-2}}{2^*_s} \Vert u_k \Vert_{2^*_s}^{2^*_s}-\lambda_k\int_{\Omega} \frac{G(x,\zeta_k u_k)}{\zeta_k^2} \, dx=0.
\end{equation}
Combining assumptions $H_3$, $H_4$ and \eqref{eq27}, we can deduce that, up to subsequence, $\zeta_k \to \overline{\zeta}$ and $\Vert u_k \Vert_{2^*_s}^{2^*_s}\to \gamma$ as $k \to \infty$. Passing to the limit in \eqref{eq27}, we get
\[
\frac{a}{2}+\frac{b}{4}\overline{\zeta}^2-\frac{\overline{\zeta}^{2^*_s-2}}{2^*_s}\gamma=0.
\]
From $a^{(N-4s)/2s}b=L_{N,s}$ it follows that
$\gamma=S_{N,s}^{-\frac{2^*_s}{2}}$, thus $(u_k)_k$ is a minimizing
sequence for $S_{N,s}$. Now, by contradiction assume $u \neq 0$. We
point out that by the lower semicontinuity of the norm we have
$\Vert u \Vert \leq 1$. Coupling this fact with Remark \ref{rem2}, we
obtain
\begin{multline*}
0 \leq \frac{a}{2}+\frac{b}{4}\overline{\zeta}^2-\frac{S_{N,s}^{-\frac{2^*_s}{2}}}{2^*_s} \overline{\zeta}^{2^*_s-2} \Vert u \Vert ^{2^*_s}  \leq \frac{a}{2}+ \frac{b}{4}\overline{\zeta}^2-\frac{\overline{\zeta}^{2^*_s-2}}{2^*_s} \Vert u \Vert_{2^*_s}^{2^*_s}  \\
 \leq \limsup_{k \to \infty} \left(\frac{a}{2}+\frac{b}{2}\zeta_k^2-\frac{\zeta_k^{2^*_s}}{2^*_s} \Vert u_k \Vert_{2^*_s}^{2^*_s}-\lambda_k \int_{\Omega} \frac{G(x,\zeta_k u_k)}{\zeta_k^2} \, dx \right)
=0,
\end{multline*}
which cannot happen since $\Omega$ is bounded, see~\cite{MR2064421}.
\end{proof}
Next proposition summarize the situation of the infimum depending on
the choice of the parameter $\lambda$ for the functional
$\mathcal{J}_{a,b}^{\lambda,u}(\zeta)$.
\begin{proposition} \label{prop7} If
  $\lambda \leq \overline{\lambda}_0^s $ then
  $ \inf_{\zeta >0} \mathcal{J}_{a,b}^{\lambda,u}(\zeta) =0$ for any
  $u \in X_0^s (\Omega) \setminus \lbrace 0 \rbrace $. On the other
  hand, if $\lambda > \overline{\lambda}_0^s$ there exists
  $ u \in X_0^s (\Omega) \setminus \lbrace 0 \rbrace $ such that
  $ \inf_{\zeta >0} \mathcal{J}_{a,b}^{\lambda,u}(\zeta) <0$.
\end{proposition}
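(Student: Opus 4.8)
The plan is to deduce both statements directly from Corollary~\ref{cor1}, which already records the sign of $\inf_{\zeta>0}\mathcal{J}_{a,b}^{\lambda,u}(\zeta)$ as $\lambda$ crosses the threshold $\lambda_0^s(u)$, together with the very definition $\overline{\lambda}_0^s=\inf_{u\in X_0^s(\Omega)\setminus\{0\}}\lambda_0^s(u)$. No new analytic estimate is needed; the argument is essentially a bookkeeping reading of the two regimes already separated in Corollary~\ref{cor1}.

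For the first assertion I would fix an arbitrary $u\in X_0^s(\Omega)\setminus\{0\}$ and observe that, since $\overline{\lambda}_0^s$ is the infimum of the quantities $\lambda_0^s(u)$, one automatically has $\lambda\le\overline{\lambda}_0^s\le\lambda_0^s(u)$. As $\lambda\ge 0$ throughout this section, the pair $(\lambda,u)$ falls into the regime $0\le\lambda\le\lambda_0^s(u)$ treated by Corollary~\ref{cor1}, which yields $\inf_{\zeta>0}\mathcal{J}_{a,b}^{\lambda,u}(\zeta)=0$. Since $u$ was arbitrary, this proves the first half.

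For the second assertion I would use that $\lambda>\overline{\lambda}_0^s=\inf_u\lambda_0^s(u)$ strictly exceeds the infimum, so by the characterization of the infimum there must exist at least one $u\in X_0^s(\Omega)\setminus\{0\}$ with $\lambda_0^s(u)<\lambda$. For that particular $u$ we are in the regime $\lambda>\lambda_0^s(u)$ of Corollary~\ref{cor1}, whence $\inf_{\zeta>0}\mathcal{J}_{a,b}^{\lambda,u}(\zeta)<0$, which is exactly the claim.

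There is no genuine obstacle here, as the substantive work has already been carried out in Proposition~\ref{prop3} (existence and uniqueness of $\lambda_0^s(u)$ solving~\eqref{eq23}) and in Corollary~\ref{cor1}. The only points requiring a little care are bookkeeping ones: one must keep $\lambda\ge 0$ so that the ``$=0$'' branch of Corollary~\ref{cor1} is applicable, and one must remember that $\overline{\lambda}_0^s$ is defined as an infimum that need not be attained, so in the second part the selection of a suitable $u$ comes from the definition of the infimum rather than from exhibiting a minimizer.
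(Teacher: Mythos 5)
Your proposal is correct and follows essentially the same route as the paper: both parts are read off from Corollary~\ref{cor1} together with the definition of $\overline{\lambda}_0^s$ as an infimum, exactly as in the paper's own proof. If anything, your version is slightly more careful in the second part, where you correctly extract a $u$ with the strict inequality $\lambda_0^s(u)<\lambda$ from the definition of the infimum.
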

\begin{proof}
  Take $\lambda \leq \overline{\lambda}_0^s $. We have that
  $ \lambda \leq \overline{\lambda}_0^s \leq \lambda_0^s(u)$ for any
  $u \in X_0^s (\Omega) \setminus \lbrace 0 \rbrace $, then the
  conclusion comes from Corollary \ref{cor1}. Instead, let us consider
  $\lambda \in \R^*$ such that $\lambda > \overline{\lambda}_0^s$. By
  the definition of infimum, it is possible to find
  $u \in X_0^s (\Omega) \setminus \lbrace 0 \rbrace $ such that
  $\lambda \geq \lambda_0^s(u) > \overline{\lambda}_0^s$. Again, the
  assertion it is a consequence of Corollary \ref{cor1}.
\end{proof}
After some preliminary results we are ready to study the set of solutions of problem \eqref{eq:Pablambda}. The first step will consists in giving the proof for Theorems \ref{th4} and \ref{th6} providing the existence of global minimizers for $\lambda \geq \overline{\lambda}_0^s$.

\begin{proof}[Proof of Theorem \ref{th4}]
  By the use of assumptions ($H_3$) and ($H_4$) it is easy to verify that
  $\mathcal{I}_{a,b}^{\lambda}$ is coercive. Furthermore, from
  \ref{lemma1} we also have the lower semicontinuity. At this point,
  as a consequence of the well known Weiestrass Theorem, we have that
  the infimum is attained. To conclude, we recall that Proposition
  \ref{prop7} implies the existence of a function in which the
  functional turns out to be negative.
\end{proof}

\begin{proof}[Proof of Theorem \ref{th6}]

  \noindent $(i)$ Let $(\lambda_k)_k \subset \R^+$ a sequence such
  that $\lambda_k \searrow \overline{\lambda}_0^s$. Theorem \ref{th4}
  implies the existence of a sequence
  $(u_k)_k \subset X_0^s(\Omega) \setminus \lbrace 0 \rbrace$ such
  that $\iota_{\lambda_k}^s=\mathcal{I}_{a,b}^{\lambda_k}(u_k)<0$. As
  we did in Proposition \ref{prop6}, after fixing $\varepsilon > 0$ we
  have
\begin{equation} \label{eq28}
|G(x,t)| \leq \frac{\varepsilon}{2}t^2+ \frac{c}{p} |t|^p
\end{equation}
for all $(x,t) \in \Omega \times \R$. Hence
\begin{align} \label{eq29}
&\frac{a}{2} \Vert u_k \Vert^2+\frac{b}{4} \Vert u_k \Vert^4-\frac{1}{2^*_s}\Vert u_k \Vert_{2^*_s}^{2^*_s} < \lambda_k \int_{\Omega} G(x,u_k)\, dx \notag \\
& \leq \lambda_k \left( \frac{\varepsilon}{2} \Vert u_k \Vert_2^2+\frac{c}{p} \Vert u_k \Vert_p^p \right) \leq \tilde{C} \left( \Vert u_k \Vert^2 + \Vert u_k \Vert^p \right)
\end{align}
for some $\tilde{C}>0$ since
$X_0^s(\Omega) \hookrightarrow L^q(\Omega)$ continuously for any
$q \in \left[ 2,2^*_s \right]$. From $4>2^*_s$ it follows that
$\left( \Vert u_k \Vert \right)_k $ must be bounded and it is not
restrictive to assume $u_k \rightharpoonup u $ in
$X_0^s(\Omega)$. Applying Lemma \ref{lemma1}[$(1)$] we obtain
\[
\mathcal{I}_{a,b}^{\overline{\lambda}_0^s}(u) \leq \liminf_{k \to \infty} \mathcal{I}_{a,b}^{\overline{\lambda}_0^s}(u_k) \leq 0.
\]
On the other hand , Proposition \ref{prop7} states that
$\mathcal{I}_{a,b}^{\overline{\lambda}_0^s}(v) \geq 0$ for any
$v \in X_0^s(\Omega)$, and so
\begin{equation} \label{eq30}
\iota_{\overline{\lambda}_0^s}^s= \mathcal{I}_{a,b}^{\overline{\lambda}_0^s}(u)=0.
\end{equation}
It remains only to prove that $u$ is a non trivial minimizer. To see
that, observe that
\[
\frac{a}{2}\Vert u_k \Vert^2+\frac{b}{4} \Vert u_k \Vert^4-\frac{S_{N,s}^{-\frac{2^*_s}{2}}}{2^*_s} \Vert u_k \Vert^{2^*_s} \leq \frac{a}{2}\Vert u_k \Vert^2+\frac{b}{4} \Vert u_k \Vert^4-\frac{1}{2^*_s} \Vert u_k \Vert^{2^*_s}_{2^*_s}< \lambda_k < \lambda_k \int_{\Omega}G(x,u_k) \, dx
\]
where we used the fractional Sobolev inequality. Dividing by $\Vert u_k \Vert^2$ and exploiting \eqref{eq28}, we get
\[
f_{N,s}(\Vert u_k \Vert) \leq \lambda_k \left( \frac{\varepsilon}{2} + \frac{c}{p} \Vert u_k \Vert_p^p \right).
\]
Were $u=0$, recalling that $X_0^s(\Omega)\hookrightarrow L^q(\Omega) $
for any $q \in \left[ 2,2^*_s \right)$, we would have
\begin{equation*}
f_{N,s}\left(\Vert u_k \Vert\right) \to 0
\end{equation*}
as $k \to \infty$ since $\varepsilon>0$ is arbitrary. This fact is in
contradiction with
\[
f_{N,s}\left( \Vert u_k \Vert \right) \geq f_{N,s}(m_{N,s})>0
\]
since $a^{(N-2s)/2s}b>L_{N,s}$. So $u$ must be different from zero.

\noindent $(ii)$ From Proposition \ref{prop6}[$(ii)$] we have
$\overline{\lambda}_0^s$, and so
\[
\mathcal{I}_{a,b}^{\overline{\lambda}_0^s}(u)= \frac{a}{2}\Vert u \Vert^2+\frac{b}{4} \Vert u \Vert^4-\frac{1}{2^*_s} \Vert u \Vert_{2^*_s}^{2^*_s}.
\]
In virtue of Remark \ref{rem2}, we have
\[
\mathcal{I}_{a,b}^{\overline{\lambda}_0^s}(u)= \Vert u \Vert^2 f_{N,s}(\Vert u \Vert)>0
\]
for any $u \in X_0^s \setminus \lbrace 0\rbrace$. Since \eqref{eq30}
is still valid, we have that the infimum can be attained only in the
case in which $u=0$.
\end{proof}
\begin{corollary} \label{cor2} If $a^{(N-4s)/2s}b>L_{N,s}$ and
  $u \in X_0^s (\Omega) \setminus \lbrace 0 \rbrace$ is such that
  $\iota_{\overline{\lambda}^s}=
  \mathcal{I}_{a,b}^{\overline{\lambda}_0^s}(u)$ then
  $\overline{\lambda}_0^s=\lambda_0^s(u)$.
\end{corollary}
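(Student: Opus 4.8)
The plan is to exploit the fiber-map characterization of $\lambda_0^s(u)$ encoded in the system \eqref{eq23}, together with the uniqueness granted by Proposition \ref{prop3}. The inequality $\lambda_0^s(u) \geq \overline{\lambda}_0^s$ is free, since $\overline{\lambda}_0^s$ is by definition an infimum over exactly such values; so the whole content of the statement is the reverse inequality, and I would obtain it by producing an explicit solution of \eqref{eq23} at the parameter $\overline{\lambda}_0^s$.

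First I would record the two facts that come directly from what precedes. Since $a^{(N-4s)/2s}b > L_{N,s}$, Proposition \ref{prop6}$(i)$ gives $\overline{\lambda}_0^s > 0$, while Proposition \ref{prop7}, applied with $\lambda = \overline{\lambda}_0^s$, yields $\inf_{\zeta > 0}\mathcal{J}_{a,b}^{\overline{\lambda}_0^s, u}(\zeta) = 0$ for every $u \in X_0^s(\Omega) \setminus \lbrace 0 \rbrace$; in particular $\iota_{\overline{\lambda}_0^s}^s = 0$, so the hypothesis $\iota_{\overline{\lambda}_0^s}^s = \mathcal{I}_{a,b}^{\overline{\lambda}_0^s}(u)$ reads simply $\mathcal{I}_{a,b}^{\overline{\lambda}_0^s}(u) = 0$.

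Next I would translate this into a statement about the fiber map. By definition $\mathcal{J}_{a,b}^{\overline{\lambda}_0^s, u}(\zeta) = \mathcal{I}_{a,b}^{\overline{\lambda}_0^s}(\zeta u)$, so evaluating at $\zeta = 1$ gives $\mathcal{J}_{a,b}^{\overline{\lambda}_0^s, u}(1) = 0$. Combined with $\inf_{\zeta > 0}\mathcal{J}_{a,b}^{\overline{\lambda}_0^s, u}(\zeta) = 0$ from the previous step, this shows that the infimum of the fiber map over $(0,\infty)$ is attained at the interior point $\zeta = 1$. Being an interior minimizer of a differentiable function, $\zeta = 1$ must be a critical point, whence $(\mathcal{J}_{a,b}^{\overline{\lambda}_0^s, u})'(1) = 0$. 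The three identities now in hand, namely $\mathcal{J}_{a,b}^{\overline{\lambda}_0^s, u}(1) = 0$, $(\mathcal{J}_{a,b}^{\overline{\lambda}_0^s, u})'(1) = 0$ and $\mathcal{J}_{a,b}^{\overline{\lambda}_0^s, u}(1) = \inf_{\varrho > 0}\mathcal{J}_{a,b}^{\overline{\lambda}_0^s, u}(\varrho)$, say precisely that the pair $(\lambda, \zeta) = (\overline{\lambda}_0^s, 1)$ solves \eqref{eq23}. Since Proposition \ref{prop3} guarantees that $\lambda_0^s(u)$ is the \emph{unique} value of $\lambda$ for which \eqref{eq23} is solvable, I conclude $\overline{\lambda}_0^s = \lambda_0^s(u)$.

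The argument requires no delicate estimates; the only step that deserves attention is the second one, namely checking that $\zeta = 1$ is genuinely an interior global minimizer of the fiber map so that the first-order condition may be invoked, and that this is consistent with Corollary \ref{cor1} rather than circular with respect to it. Once that is in place, the uniqueness in Proposition \ref{prop3} delivers the conclusion immediately.
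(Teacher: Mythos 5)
Your proposal is correct and follows exactly the paper's argument: the paper's proof simply asserts that the pair $(\overline{\lambda}_0^s, u)$ solves the system \eqref{eq23} and invokes the uniqueness from Proposition \ref{prop3}, which is precisely what you establish (with the intermediate steps --- $\iota^s_{\overline{\lambda}_0^s}=0$ via Proposition \ref{prop7}, hence $\zeta=1$ is an interior global minimizer of the fiber map, hence a critical point --- helpfully spelled out).
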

\begin{proof}
  The pair $(\overline{\lambda}_0^s, u)$ solve the system
  \eqref{eq23}. The conclusion follows by uniqueness.
\end{proof}

\begin{proof}[Proof of Theorem \ref{th5}]
  Fix $\varepsilon>0$ and recall the Aubin-Talenti functions
  $u_{\varepsilon}$ defined in Proposition \ref{prop6}. Choose
  $\zeta >0$ and keep $\lambda >0$ free. We have
\begin{align*}
\mathcal{J}_{a_k,b_k}^{\lambda, u_{\varepsilon}}(\zeta)&= \frac{a_k}{2}\zeta^2+\frac{b_k}{4}\zeta^4-\frac{\zeta^{2^*_s}}{2^*_s} \Vert u_k \Vert_{2^*_s}^{2^*_s}-\lambda \int_{\Omega}G(x,\zeta u_{\varepsilon})\, dx \\
&=\zeta^2 f_{N,s}^k (\zeta ) - \frac{\zeta^{2^*_s}}{2^*_s}O\left(\varepsilon^{\frac{N-2s}{2}} \right)-\lambda \int_{\Omega}G(x,\zeta u_{\varepsilon})\, dx
\end{align*}
where we defined with $f_{N,s}^k$ the map $f_{N,s}$ depending on the
parameters $a_k$, $b_k$. We select $\zeta=m_{N,s}^k$ (here $m_{N,s}^k$
is the point point in which $f_{N,s}^k$ attains its minimum), and
since $m_{N,s}^k \to m_{N,s}$ as $k \to \infty$, we get
\begin{equation} \label{eq31}
\lim_{k \to \infty} \mathcal{J}_{a_k,b_k}^{\lambda, u_{\varepsilon}}(m_{N,s}^k)= -\frac{m_{N,s}^{2^*_s}}{2^*_s}O\left( \varepsilon^{\frac{N-2s}{2}} \right)-\lambda \int_{\Omega} G(x,m_{N,s}u_{\varepsilon})\, dx.
\end{equation}
Recalling that in Proposition \ref{prop6} we obtained the estimate
\[
\int_{\Omega}G(x,m_{N,s}u_{\varepsilon})\, dx \geq C_2 \varepsilon^{\frac{N}{2}},
\]
from \eqref{eq31} we deduce
\[
\lim_{k \to \infty} \mathcal{J}_{a_k,b_k}^{\lambda,u_{\varepsilon}}(m_{N,s}^k) \leq \varepsilon^{\frac{N-2s}{2}} \left( - \frac{1}{2^*_s}O(1) - \lambda C_2 \varepsilon^s \right).
\]
Hence for $k$ sufficiently large and small $\varepsilon$
\[
\mathcal{J}_{a_k,b_k}^{\lambda,u_{\varepsilon}}(m_{N,s}^k) <0.
\]
As a consequence of that, we have that
$\lambda_k \leq \lambda_0^s(u_{\varepsilon}) \leq \lambda$. Letting
$\lambda \to 0$ we obtain that $\lambda_k \to 0$ as $k \to
\infty$. Now, exploiting the homogeneity of degree zero of the
function $\lambda_0^s(\cdot)$ proved in Proposition \ref{prop6}, we
suppose $\Vert u_k \Vert=1$ and $u_k \rightharpoonup u$. Arguing
similarly as we did to get \eqref{eq27} we are able to deduce the
existence of $\zeta_k>0$ such that
\begin{equation} \label{eq32}
\frac{a_k}{2}+\frac{b_k}{4} \zeta_k^2-\frac{\zeta^{2^*_s-2}}{2^*_s} \Vert u_k \Vert^{2^*_s}_{2^*_s}-\lambda_k \int_{\Omega} \frac{G(x,\zeta_k u_k)}{\zeta_k^2} \, dx=0
\end{equation}
Moreover, from $H_3$, $H_4$ and \eqref{eq32} it follows that $\zeta_k \to \overline{\zeta}>0$ and that $\Vert u_k \Vert_{2^*_s}^{2^*s} \\to \gamma $ up to a subsequence as $k \to \infty$. Thus, passing to the limit in \eqref{eq32} we get
\[
\frac{a}{2}+\frac{b}{4} \overline{\zeta}^2-\frac{1}{2^*_s}\gamma \overline{\zeta}^{2^*_s-2}=0.
\]
Since $a^{(N-4s)/2s}b =L_{N,s}$ it must be $\gamma = S_{N,s}$ and that
means $(u_k)_k$ is a minimizing sequence for the optimal Sobolev
constant. We also have that $u=0$. Indeed, if $ u \neq 0$, combining
Remark \ref{rem2}, the fact that by the sequentially lower
semicontinuity of the norm $\Vert u \Vert \leq 1$ and Lemma
\ref{lemma1}[$(1)$], we obtain
\begin{align*}
0& \leq \frac{a}{2}+\frac{b}{4}\overline{\zeta}^2-\frac{S_{N,s}^{-\frac{2^*_s}{2}}}{2^*_s} \overline{\zeta}^{2^*_s-2} \Vert u \Vert^{2^*_s} \leq \frac{a}{2}+\frac{b}{4}\overline{\zeta}^2- \frac{\overline{\zeta}^{2^*_s-2}}{2^*_s} \Vert u \Vert^{2^*_s}_{2^*_s} \\
& \leq \liminf_{k \to \infty} \left( \frac{a_k}{2}+\frac{b_k}{4}\zeta_k^2- \frac{\zeta_k^{2^*_s-2}}{2^*_s} \Vert u_k \Vert^{2^*_s}_{2^*_s} -\lambda_k \int_{\Omega} \frac{ G(x,\zeta_k u_k)}{\zeta_k^2} \, dx \right)=0
\end{align*}
The conclusion comes from the nonexistence of minimizers for $S_{N,s}$
in bounded sets as shown in \cite{MR2064421}.
\end{proof}
Now, we begin to investigate solutions of mountain pass type. As we
will see, the situation changes if
$\lambda \geq \overline{\lambda}_0^s$ or
$\lambda < \overline{\lambda}_0^s$. The reader should keep in mind
that from now to the end of the section we will consider positive
parameters $a, b \in \R$ such that $a^{(N-4s)/2s}b > L_{N,s}$.

\begin{proof} [Proof of theorem \ref{th7}]
  Take $\varepsilon >0$. Recalling \eqref{eq28} and that
  $X_0^s(\Omega) \hookrightarrow L^q(\Omega)$ continuously for any
  $q \in \left[ 2, 2^*_s \right]$ we obtain
\begin{equation} \label{eq33}
\mathcal{I}_{a,b}^{\lambda} \geq \left( \frac{a}{2}-\lambda C \varepsilon \right) \Vert u \Vert^2+\frac{b}{4} \Vert u \Vert^4-C \Vert u \Vert^{2^*_s}-\lambda C \Vert u \Vert^p
\end{equation}
where $C>0$ is a constant chosen adequately. By selecting $ \varepsilon < a/(2 \lambda C)$ there exists $R_{\lambda}^s$ such that
\[
\inf_{\Vert u \Vert = R^s_{\lambda}} \mathcal{I}_{a,b}^{\lambda} >0.
\]
Now, observe that $\mathcal{I}_{a,b}^{\lambda}(0)=0$ and
$\mathcal{I}_{a,b}^{\lambda}(u_{\overline{\lambda}_0^s}^s) \leq
0$. Indeed,
$\mathcal{I}_{a,b}^{\lambda}(u_{\overline{\lambda}_0^s}^s) = 0$ if
$\lambda = \overline{\lambda}_0^s$ while
$\mathcal{I}_{a,b}^{\lambda}(u_{\overline{\lambda}_0^s}^s) < 0$ for
$\lambda > \overline{\lambda}_0^s$ by Proposition \ref{prop7}. As a
consequence of that, the functional possesses a mountain pass
geometry. Furthermore, recalling Lemma~\ref{lemma1}[$(2)$] we have that
$\mathcal{I}_{a,b}^{\lambda}$ satisfies the Palais-Smale condition. At
this point the conclusion is obtained by applying the classic mountain
pass theorem.
\end{proof}
Having analysed the situation for
$\lambda \geq \overline{\lambda}_0^s$, now we draw our attention to
the case $\lambda < \overline{\lambda}_0^s$. Namely, we will show the
existence of non trivial solutions that are local minimizer or of
mountain pass type.
\begin{proposition} \label{prop8} If
  $\lambda \leq \overline{\lambda}_0^s$ then it is possible to find
  $r=r(s), M=M(s)>0$ such that
 \begin{equation} \label{eq34}
 \inf \left\lbrace \mathcal{I}_{a,b}^{\lambda}(u): u \in X_0^s(\Omega), \ \Vert u \Vert=r \right\rbrace \geq M.
\end{equation}
\end{proposition}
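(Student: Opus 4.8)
The plan is to bound $\mathcal{I}_{a,b}^{\lambda}$ from below on the sphere $\{\Vert u\Vert=r\}$ by a function of $r$ alone, \emph{uniformly} in $\lambda\leq\overline{\lambda}_0^s$, and then to use the strict inequalities $2<p<2^*_s$ to make this bound a positive constant once $r$ is small. Since the standing assumption of this part of the section is $a^{(N-4s)/2s}b>L_{N,s}$, Proposition \ref{prop6}$(i)$ guarantees $\overline{\lambda}_0^s>0$, so that $\overline{\lambda}_0^s$ is a legitimate finite upper bound for $\lambda$. I would first dispose of the range $\lambda\leq 0$: since $(H_2)$ forces $G(x,t)\geq 0$ for every $t\in\R$, the term $-\lambda\int_\Omega G(x,u)\,dx$ is nonnegative there and only improves the estimate, so it suffices to treat $0\leq\lambda\leq\overline{\lambda}_0^s$.

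For that range I would reuse the growth control \eqref{eq28}: fixing $\varepsilon>0$, the bound $|G(x,t)|\leq\frac{\varepsilon}{2}t^2+\frac{c}{p}|t|^p$ together with the continuous embeddings $X_0^s(\Omega)\hookrightarrow L^2(\Omega)$ and $X_0^s(\Omega)\hookrightarrow L^p(\Omega)$ and the Sobolev inequality for the critical term yields, exactly as in \eqref{eq33},
\begin{equation*}
\mathcal{I}_{a,b}^{\lambda}(u)\geq\Bigl(\frac{a}{2}-\lambda C\varepsilon\Bigr)\Vert u\Vert^2+\frac{b}{4}\Vert u\Vert^4-C\Vert u\Vert^{2^*_s}-\lambda C\Vert u\Vert^p,
\end{equation*}
for a constant $C>0$ depending only on $s$, $N$, $\Omega$, $p$ and the embedding constants. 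Replacing $\lambda$ by $\overline{\lambda}_0^s$ in the two negative $\lambda$-terms (legitimate since $0\leq\lambda\leq\overline{\lambda}_0^s$) and discarding the nonnegative quartic term, the right-hand side is dominated below by a function $\Phi(\Vert u\Vert)$ independent of $\lambda$, where
\begin{equation*}
\Phi(r):=\Bigl(\frac{a}{2}-\overline{\lambda}_0^s C\varepsilon\Bigr)r^2-C r^{2^*_s}-\overline{\lambda}_0^s C r^p.
\end{equation*}

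Finally I would fix the free parameter $\varepsilon$ so small that $c_1:=\frac{a}{2}-\overline{\lambda}_0^s C\varepsilon>0$; this is possible because $\overline{\lambda}_0^s$ and $C$ are now fixed constants. With this choice $\Phi(r)/r^2=c_1-C r^{2^*_s-2}-\overline{\lambda}_0^s C r^{p-2}\to c_1>0$ as $r\to 0^+$, precisely because $2<p<2^*_s$ makes the last two terms vanish. Hence there is $r=r(s)>0$, depending only on $s$ and the fixed data, with $\Phi(r)>0$, and setting $M=M(s):=\Phi(r)$ gives $\mathcal{I}_{a,b}^{\lambda}(u)\geq\Phi(r)=M>0$ for every $u$ with $\Vert u\Vert=r$ and every $\lambda\leq\overline{\lambda}_0^s$, which is \eqref{eq34}. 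The computation itself is elementary; the one point requiring care—and the only real content beyond Theorem \ref{th7}—is the uniformity in $\lambda$, which is exactly what the bound $\lambda\leq\overline{\lambda}_0^s$ and the positivity $\overline{\lambda}_0^s>0$ deliver.
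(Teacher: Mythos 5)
Your argument is correct and is essentially the paper's own proof: both start from the lower bound \eqref{eq33}, replace $\lambda$ by $\overline{\lambda}_0^s$ in the negative terms, choose $\varepsilon$ so that $a/2-\overline{\lambda}_0^s C\varepsilon>0$, and then use $2<p<2^*_s$ to pick $r$ small enough that the bound is a positive constant $M$ on the sphere $\Vert u\Vert=r$. You merely spell out more explicitly the uniformity in $\lambda$ and the small-$r$ limit, which the paper leaves implicit.
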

\begin{proof}
Given $\varepsilon >0$, because $\lambda \leq \overline{\lambda}_0^s$ and recalling \eqref{eq33}, we get
\[
\mathcal{I}_{a,b}^{\lambda} \geq \left( \frac{a}{2}-\overline{\lambda}_0^s C \varepsilon \right) \Vert u \Vert^2+\frac{b}{4} \Vert u \Vert^4-C \Vert u \Vert^{2^*_s}-\overline{\lambda}_0^s C \Vert u \Vert^p
\]
for any $u \in X_0^s(\Omega)$. The statement follows by taking
$\varepsilon$ so that $a/2-\overline{\lambda}_0^sC\varepsilon>0$.
\end{proof}
Now,  we are able to characterize the infimum in Theorem \ref{th8}. More precisely, considering the $r>0$ given by the previous Proposition, we can set
\[
\hat{\iota}_{\lambda}^s:= \inf \lbrace \mathcal{I}_{a,b}^{\lambda}(u):u \in X_0^s(\Omega), \ \Vert u \Vert \geq r\rbrace.
\]
\begin{remark} \label{rem5} It is straightforward to see that
  $\hat{\iota}_0^s \to 0$ as $\lambda \to \overline{\lambda}_0^s$. Indeed, it suffices to consider a function $u \in X_0^s(\Omega)$ such
  that $\overline{\lambda}_0^s=\lambda_0^s(u)$ (the existence of a
  such $u$ is guaranteed by Theorem \ref{th6}) and to observe that
\[
  0 \leq \hat{\iota}_{\lambda}^s\leq \mathcal{I}_{a,b}^{\lambda}(u)
  \to 0 \quad \mbox{as} \quad \lambda \to \overline{\lambda}_0^s.
\]
\end{remark}

\begin{remark} \label{rem6}
The function $w_{\lambda}^s$ obtained in the previous Theorem represents a critical point for the functional $\mathcal{I}_{a,b}^{\lambda}$ and it is a local minimizer.
\end{remark}
\begin{proof}[Proof of Theorem \ref{th8}]
Consider the $r, M>0$ given by Proposition \ref{prop8} and notice that if $ \lambda \in (\overline{\lambda}_0^s-\delta, \overline{\lambda}_0^s )$ we have that $\hat{\iota}_{\lambda}^s<M$ for an appropriate $\delta>0$. As a consequence of that, if $(u_k)_k$ is a minimizing sequence there must be a $\upsilon>0$ such that $\Vert u_k \Vert \geq M + \upsilon$ for $k$ sufficiently large. At this point, invoking the Ekeland's variational principle (see \cite{zbMATH03449362}) we have the existence of a minimizing sequence and the convergence to a local minimizer $w_\lambda^s \in X_0^s(\Omega)$ such that $ \Vert w_{\lambda}^s \Vert >M$ and $\hat{\iota}_{\lambda}^s=\mathcal{I}_{a,b}^{\lambda}(w_{\lambda}^s)$ is established remembering the validity of the Palais-Smale condition as showed in Lemma \ref{lemma1}[$(2)$].
\end{proof}
Finally, we prove Theorem \ref{th9} that ensure the existence of mountain pass solutions for $\lambda < \overline{\lambda}_0^s$ close enough to $\overline{\lambda}_0^s$. In the following we will denote with $\delta>0$ the number obtained in Theorem \ref{th8}.

\begin{proof}[Proof of Theorem \ref{th9}]
Notice $\min \lbrace \mathcal{I}_{a,b}^{\lambda}(0), \mathcal{I}_{a,b}^{\lambda}(w_{\lambda}^s)\rbrace<M$, recall $\Vert w_{\lambda}^s \Vert >M$ and \eqref{eq34}. So, we have a mountain pass geometry. Since the Palais-Smale condition is satisfied, we exploit the the Mountain Pass Theorem (see \cite{zbMATH03429675}) to get the conclusion.
\end{proof}
\begin{remark}
If in addition to assumptions $(H_1)-(H_4)$ we require
\begin{itemize}
\item[($H_5$)] for any $u \in X_0^s(\Omega)$, the function $\zeta \mapsto \displaystyle\int_{\Omega} g(x,\zeta u(x)) \, dx $ is $C^1$ on $(0,\infty )$
\end{itemize}
we are able to state a non-existence result for problem \eqref{eq:Pablambda}. Namely, we claim there is $\overline{\lambda}^s:= \overline{\lambda}^s(a,b) \in (0,\overline{\lambda}_0^s)$ such that if $\lambda \in (0, \overline{\lambda}^s)$ then \eqref{eq:Pablambda} does not admit non trivial solutions. Consider the system
\begin{equation} \label{eq36}
\begin{cases}
\left(\mathcal{J}_{a,b}^{\lambda,u}\right)'(\zeta)=0 \\
\left(\mathcal{J}_{a,b}^{\lambda,u}\right)''(\zeta)=0 \\
\left(\mathcal{J}_{a,b}^{\lambda,u}\right)'(\zeta)=\inf_{\varrho>0} \left(\mathcal{J}_{a,b}^{\lambda,u}\right)'(\varrho).
\end{cases}
\end{equation}
After fixing $u \in X_0^s(\Omega)$, similarly to proposition \ref{prop3} it is possible to find a unique $\lambda^s(u) >0$ that solves \eqref{eq36}. We point out that the parameter $\lambda^s(u)$ is the unique $\lambda >0$ for which the fiber map $\mathcal{J}_{a,b}^{\lambda,u}(\zeta)$ has a critical point with null second derivative. Furthermore, observe that if $0 < \lambda < \lambda^s(u)$ then
\[
\mathcal{J}_{a,b}^{\lambda,u}(\zeta) > \mathcal{J}_{a,b}^{\lambda^s(u),u}(\zeta)>0.
\]
So, $\mathcal{J}_{a,b}^{\lambda,u}(\zeta)$ has no critical points. As a consequence of that, it is immediate to prove that
\begin{equation}\label{eq37}
\lambda^s(u) < \lambda^s_0(u).
\end{equation}
If not, we would have that $\mathcal{J}_{a,b}^{\lambda_0^s(u),u}(\zeta)$ is increasing contradicting the existence of solutions for system \eqref{eq23}. At this point we set
\[
\overline{\lambda}^s:= \inf_{u \in X_0^s(\Omega) \setminus \lbrace 0 \rbrace} \lambda^s(u).
\]
Now, we observe that if $a^{(N-4s)/2s}b > \mathsf{PS}_{N,s}$ then $0<\overline{\lambda}^s<\overline{\lambda}_0^s$. In fact, we know from Corollary \ref{cor2} that there is $u \in  X_0^s(\Omega) \setminus \lbrace 0 \rbrace$ such that $\overline{\lambda}_0^s=\lambda_0^s(u)$. From \eqref{eq37} it follows that
\[
\overline{\lambda}^s \leq \lambda^s(u) < \lambda_0^s(u)=\overline{\lambda}_0^s.
\]
To conclude, we observe that for any $\lambda \in (0,\overline{\lambda}^s )$ the map $\mathcal{J}_{a,b}^{\lambda,u}(\zeta)$ is increasing and $\left( \mathcal{J}_{a,b}^{\lambda,u} \right)'(\zeta) >0$ for all $\zeta >0$. Hence $u=0$ is the only admissible critical point.
\end{remark}

\bibliographystyle{amsplain}
\bibliography{FFK}
\end{document}